\documentclass[10pt,oneside]{amsart}

\usepackage{amsmath,graphicx}
\usepackage{amssymb}
\usepackage{amsfonts, amsthm}
\usepackage{todonotes}
\usepackage{enumitem}
\usepackage{array}
\usepackage{float}
\usepackage{dsfont}

\usepackage[parfill]{parskip}

\usepackage{color}
\usepackage{xcolor}
\usepackage{geometry}

\usepackage{enumitem}

\usepackage[colorlinks=true, pdfstartview=FitV,linkcolor=blue,citecolor=blue, urlcolor=black]{hyperref}
\usepackage{cleveref}
\crefformat{equation}{(#2#1#3)}

\makeatletter
\def\subsection{\@startsection{subsection}{2}%
  \z@{.5\linespacing\@plus.7\linespacing}
{.5\baselineskip}%
  {\normalfont\centering\scshape}%
}
\makeatother

\numberwithin{equation}{section}

\makeatletter
\def\l@subsection{\@tocline{2}{0pt}{2.5pc}{5pc}{}}
\def\l@section{\@tocline{1}{0pt}{0pc}{5pc}{\bfseries}}
\makeatother

\newtheorem{definition}{Definition}
\numberwithin{definition}{section}
\newtheorem{theorem}{Theorem}
\numberwithin{theorem}{section}

\newtheorem*{proposition*}{Proposition}

\newtheorem{lemma}{Lemma}
\numberwithin{lemma}{section}

\newtheorem*{remark*}{Remark}
\newtheorem*{claim*}{Claim}

\crefname{theorem}{Theorem}{Theorems}
\Crefname{theorem}{Theorem}{Theorems}

\crefname{lemma}{Lemma}{Lemmas}
\Crefname{lemma}{Lemma}{Lemmas}

\crefname{proposition}{Proposition}{Propositions}
\Crefname{proposition}{Proposition}{Propositions}

\crefname{corollary}{Corollary}{Corollaries}
\Crefname{corollary}{Corollary}{Corollaries}

\crefname{definition}{Definition}{Definitions}
\Crefname{definition}{Definition}{Definitions}

\crefname{remark}{Remark}{Remarks}
\Crefname{remark}{Remark}{Remarks}

\crefname{proposition*}{Proposition}{Propositions}
\Crefname{proposition*}{Proposition}{Propositions}
\crefname{remark*}{Remark}{Remarks}
\Crefname{remark*}{Remark}{Remarks}
\crefname{claim*}{Claim}{Claims}
\Crefname{claim*}{Claim}{Claims}

\begin{document}

\title[Uniqueness for the fuzzy Landau and multiespecies Landau equations]{
  Uniqueness of bounded solutions to the fuzzy Landau
  and multiespecies Landau equations
  }

\author{F.-U. Caja-Lopez}

\address{Department of Mathematics, The University of Texas at Austin (USA)}
\email{funai.caja@utexas.edu}


\begin{abstract}
  We prove conditional uniqueness of weak solutions to the fuzzy Landau equation and the multiespecies Landau system under suitable integrability assumptions. The results are based on explicit stability estimates in the $2$-Wasserstein distance for a broader class of nonlinear equations with singular coefficients. Interestingly, this class includes the $2$D incompressible Euler equations, the Vlasov--Poisson system, and the Patlak--Keller--Segel model, thereby recovering known uniqueness results within a unified framework. Our approach builds on the stochastic coupling method introduced by Fournier and Guerin for the homogeneous Landau equation in \cite{fournier2010uniqueness_Coulomb}, which we recast in a more analytic form. In addition, we present an alternative argument based on the symmetrization technique of Guillen and Silvestre \cite{GS24}, which was further developed in \cite{paper1}, yielding comparable stability estimates.
\end{abstract}

\keywords{Optimal transport, Wasserstein distance, Landau equation}

\maketitle


\section{Introduction}
This work is motivated by the study of the Landau equation, which models collisional electron dynamics in plasmas:
\begin{equation}\label{eq:inhomo_Landau}
\partial_{t}f+v\cdot\nabla_{x}f = \textnormal{div}_v\int_{\mathbb R^3}\Phi(v-v_*)
\Big(f(t,x,v_*)\nabla_v f(t,x,v)-f(t,x,v)\nabla_{v_*} f(t,x,v_*)\Big)\,dv_*,
\end{equation}
where $\Phi(z)=|z|^{2+\gamma}\mathbb P(z)$ and $\mathbb P(z)=\mathrm{Id}-\frac{z\otimes z}{|z|^2}$ denotes the orthogonal projection onto $\langle z\rangle^\perp$. The parameter $\gamma$ is typically considered in the range $-3\leq \gamma \leq 1$, with the most physically relevant case being $\gamma=-3$, corresponding to Coulomb interactions. The analysis of \eqref{eq:inhomo_Landau} is challenging due to its nonlinear, nonlocal, hypoelliptic structure and the presence of singular coefficients.
We study uniqueness and stability of two variants of \eqref{eq:inhomo_Landau}. First, we consider the \textit{fuzzy Landau equation}
\begin{equation}\label{eq:fuzzy_Landau}
  \partial_{t}f+v\cdot\nabla_{x}f = \textnormal{div}_v\int_{\mathbb R^6} \kappa(x-x_*)\Phi(v-v_*) (\nabla_v-\nabla_{v_*})\big(f(t,x,v) f(t,x_*,v_*)\big)\,dx_*\,dv_*,
\end{equation}
where the kernel $\kappa$ encodes the range of spatial interactions. Similarly to \eqref{eq:inhomo_Landau}, the right-hand side models collisions between particles with position–velocity pairs $(x,v)$ and $(x_*,v_*)$. The key feature distinguishing \eqref{eq:fuzzy_Landau} from the classical Landau equation is that collisions are \emph{delocalized}: particles at different spatial locations may interact whenever $\kappa(x-x_*)>0$. Mathematically, \eqref{eq:fuzzy_Landau} can be viewed as an intermediate model between \eqref{eq:inhomo_Landau} and the spatially homogeneous equation. Note that setting $\kappa=\delta_0$ formally recovers \eqref{eq:inhomo_Landau}, suggesting that the study of \eqref{eq:fuzzy_Landau} may provide insight into the original equation.

The second variant we study is the \emph{homogeneous multiespecies Landau equation}, which describes the evolution of $N$ interacting particle species with the spatial dependence removed:
\begin{equation}\label{eq:multiespecies_Landau}
  \partial_t f^{(i)} = \sum_{j=1}^N q_{ij}(f^{(i)},f^{(j)}),
\end{equation}
where the collision operator between species $i$ and $j$ is given by
\begin{equation*}
  q_{ij}(f,g) := \frac{c_{ij}}{m_{i}}\textnormal{div}_{v^{i}}\int_{\mathbb R^3} \Phi(v^{i}-v^{j}_*)\left(\frac{\nabla_{v^{i}}}{m_{i}}-\frac{\nabla_{v^{j}_*}}{m_{j}}\right)\big(f(t,v^{i})g(t,v^{j}_*)\big)\,dv^{j}_*,
\end{equation*}
Here, $m_i>0$ denotes the mass of species $i$, and $c_{ij}$ is a physical constant given by
\begin{equation*}
  c_{ij} = \frac{|\log \Lambda_c|q_i^2q_j^2n_in_j}{8\pi\varepsilon_0^2},
\end{equation*}
where $|\log \Lambda_c|$ is the Coulomb logarithm, and $\varepsilon_0$ is the permittivity of vacuum. Let us do a brief literature review on \cref{eq:inhomo_Landau,eq:fuzzy_Landau,eq:multiespecies_Landau} as further motivation.

\textbf{The Landau equation.} Several works establish local-in-time existence of solutions to the Landau equation for smooth or bounded initial data; see, for instance, \cite{HeYang2014WellPosedBoltzLandau,henderson2019localExistenceLandau,HendSnelTarfu2020LocalSolLandau}. However, global-in-time existence of classical solutions for general initial data remains a major open problem. Moreover, in connection with hydrodynamic limits and the recent works \cite{MerleRaphRodSze2022ImplosionEuler1,MerleRaphRodSze2022ImplosionEuler2}, which exhibit implosion singularities for the compressible Euler equations, it is expectable that the Landau equation may also develop singular behavior. This phenomenon has recently been observed in \cite{gualdani2026Singularities} for $\gamma > \sqrt{3}$.

Global existence of solutions is currently known only in weaker settings. In particular, renormalized solutions exist globally in time \cite{Lions1994RenormSolLandauBoltzman}, while global smooth solutions have been established only under restrictive assumptions, such as initial data close to equilibrium \cite{Guo2002LandauPeriodicBox,CarraKeblerTristWu2016ExpStabLandau,CarraMisch2017LandauNearMaxw,DuanLiu2021GlobalMildLandau,KimGuo2020L2LinftyLandau,GuoHwangJinOuy2020LandauSpecular,chenNguYang2025LandauBoltzCriticalSpaces} or near vacuum \cite{Luk2019LandauVacuum,chaturvedi2023LandauVacuum}. Another line of work establishes regularity and continuation criteria under uniform control of macroscopic quantities such as the local mass, energy, and entropy
\begin{equation*}
  M(t,x):= \int_{\mathbb R^3}f\,dv,\quad 
  E(t,x):= \int_{\mathbb R^3}f|v|^2\,dv,\quad
  H(t,x):= \int_{\mathbb R^3}f\log f\,dv.
\end{equation*}
The underlying idea in these works is that loss of regularity should be accompanied by a deterioration of these macroscopic quantities; see \cite{MouhotVasseur2019HarnackLandau,henderson2020Cinfty_smoothing,snelson2023continuationLandau,silvestreSnelson2018aPrioriLandau} for further details. We also mention \cite{golding2025hydrodynamicimplosionslandaucoulombequation}, which shows that solutions may be continued as long as they remain bounded. 

In contrast, in the space-homogeneous setting, where the initial data depends only on the velocity variable $v$, the well-posedness theory is considerably more complete. Global-in-time existence of $\mathcal H$-solutions has been established in \cite{Villani1998LandauHsolutions,desvill_villani2000landau_hard,AlexLiaoLin2015APrioriLandauSoft,wu2014GlobalEstimLandau,fournier2009well_posedness_soft_potentials}. The formation of singularities was ruled out in \cite{GS24}, where the Fisher information is shown to be monotone decreasing. This result has since been extended to increasingly rough initial data in \cite{GualdaGoldLohDev2024LandauFisherL1,Ji2024LandauDissFisher,GualdGoldLoher2025LandauL32,HeJiLuo2025LandauH-12}.

\textbf{Delocalized kinetic equations and the fuzzy Landau equation.} The notion of nonlocal collisions in kinetic equations was first introduced by Morgenstern \cite{Morgen1955MaxwellBoltzmann} in the context of the Maxwell--Boltzmann equation. Several related models were subsequently developed, including the Povzner equation \cite{povzner1962boltzmann} and the Enskog equation \cite{Arkeryd1986Enskog2D,Arkeryd1989Enskog,toscaniBellomo1987Enskog}. On the other hand, fuzzy kinetic models have been introduced more recently. These were first proposed for the Boltzmann equation in \cite{Erbar2025FuzzyBoltzmann,erbar2025LimitFuzzyBoltzmann}, and later extended to the Landau equation in the series \cite{duong2025FuzzyLandau1,duong2025FuzzyLandau2,duong2025FuzzyLandau3}. In this setting, the authors develop a variational formulation of \eqref{eq:fuzzy_Landau}, establish global existence of $\mathcal{H}$-solutions, and derive \eqref{eq:fuzzy_Landau} as the grazing-collision limit of the fuzzy Boltzmann equation, thereby extending the classical results of \cite{AlexVillani2004LandauApproxi}. Shortly after the introduction of \cref{eq:fuzzy_Landau}, global existence and uniqueness of smooth solutions for regular initial data were established in \cite{gualdani2025FuzzyLandauFisher} for $-\sqrt 7 \leq \gamma \leq 0$.

\textbf{The multispecies Landau equation.}
The literature on the multiespecies Landau equation \eqref{eq:multiespecies_Landau} remains limited. It was first analyzed in the inhomogeneous setting in \cite{GualdaniZamponi2017MultiespLandau}, where a spectral gap for the linearized operator was established, together with existence and uniqueness of solutions for smooth initial data close to equilibrium and exponential convergence to the steady state. In a related perturbative framework, \cite{YuHaitaoKung2018LinearMultiLandau} studied a two-species system in which one species is near vacuum and the other near equilibrium, obtaining instantaneous smoothing for the linearized equation.

For general initial data, existence results are currently available only in the homogeneous setting. In the recent preprint \cite{JunnWinterYoldas2025multiLandau}, global existence of smooth solutions is established for interaction potentials in the range $-\sqrt{8}\leq \gamma<-2$, by extending the symmetrization method of \cite{GS24} and proving monotonicity of the Fisher information. Independently, monotonicity of the Fisher information was obtained in \cite{zhu2025fisherinformationmultispecieslandau} using a similar approach. These results are an important motivation for our second main theorem.

\subsection{Main results}
Our main contribution is to extend the uniqueness results of \cite{fournier2009well_posedness_soft_potentials,fournier2010uniqueness_Coulomb} to the fuzzy Landau equation \cref{eq:fuzzy_Landau} and the multiespecies Landau equation \cref{eq:multiespecies_Landau}, establishing explicit stability bounds under suitable integrability assumptions. The main difficulty lies in treating weak solutions without regularity assumptions, thereby accommodating very rough initial data. We also present a novel analytic re-interpretation of the stochastic argument from \cite{fournier2009well_posedness_soft_potentials,fournier2010uniqueness_Coulomb}.

Our stability estimates rely on the $2$-Wasserstein distance, which for probability densities $f,g$ on $\mathbb R^n$ is defined by
\begin{equation*}
d_2^2(f,g) := \inf_{\Pi\in\Gamma(f,g)} \int_{\mathbb R^n\times\mathbb R^n} |x-y|^2\,d\Pi(x,y),
\end{equation*}
where $\Gamma(f,g)$ denotes the set of transport plans, i.e., probability measures on $\mathbb{R}^n\times\mathbb{R}^n$ with marginals $f$ and $g$. This distance metrizes the space $\mathcal P_2(\mathbb R^n)$ of probability measures with finite second moments, endowed with the topology of weak convergence together with convergence of second moments. See \cite{villani2003topicsOT} for background on optimal transport.

We use a weak formulation of \cref{eq:fuzzy_Landau,eq:multiespecies_Landau}, in which all derivatives are transferred onto test functions. To this end, we introduce
\begin{equation*}
b(z):=\textnormal{div}_z \Phi(z) = -2 |z|^\gamma z.
\end{equation*}
We begin with the fuzzy Landau equation.
\begin{definition}\label{defn:weak_sol_fuzzy}
  Let $-3\leq \gamma \leq -2$, $q>\frac{3}{4+\gamma}$, and assume that $\kappa$ is bounded. Consider a curve of probability densities $t\in [0,T]\mapsto f(t)\in \mathcal P_2(\mathbb R^6)$ with $m_2(f):=\sup_{[0,T]}\int (|x|^2+|v|^2)f(t)<\infty$ and such that $f\in L^1_t L^q_v L^1_x$. We say that $f$ is a weak solution of the fuzzy Landau equation \cref{eq:fuzzy_Landau} if
  \begin{align*}
    \int_{\mathbb{R}^{6}}\varphi f(t)\,dx\,dv &
    = \int_{\mathbb{R}^{6}}\varphi f(0)\,dx\,dv	\\
    &+\int_0^t\int_{\mathbb{R}^{6}}\left[-\left(v\cdot\nabla_{x}\varphi\right)+2(\kappa b*f)\cdot\nabla_{v}\varphi+\left(\kappa\Phi*f\right):D_{v}^{2}\varphi\right]f\,dx\,dv\,ds,
  \end{align*}
  for every $\varphi\in\mathcal{C}_{b}^{2}(\mathbb{R}^{6})$ and every $t\in [0,T]$.
\end{definition}
%
We now state our first main result.
\begin{theorem}\label{thm:fuzzy}
  Consider $f,g$ weak solutions of \eqref{eq:fuzzy_Landau} for $-3\leq \gamma \leq -2$, and let $\sqrt{\kappa}$ be bounded and Lipschitz. Assume that $f,g\in L^1_t L^p_v L^1_x$ for $p=\frac{3}{3+\gamma}\in [3,\infty]$. Then, we have
  \begin{equation*}
    \sup_{0\leq t\leq T}d_2^2\big(f(t),g(t)\big) \leq \omega \left(d_2^2\big(f(0),g(0)\big)\right),
  \end{equation*}
  where $\omega:[0,\infty)\longrightarrow[0,\infty)$ is continuous, increasing, and satisfies $\omega(0)=0$. This modulus of continuity $\omega$ depends on $T$, $\Vert f\Vert_{L^1_tL^p_vL^1_x}$, $\Vert g\Vert_{L^1_tL^p_vL^1_x}$, and $\Vert\sqrt{\kappa}\Vert_{\mathrm{Lip}}$.
\end{theorem}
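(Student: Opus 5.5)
We follow the coupling strategy of Fournier and Guerin, written in an analytic (Eulerian) form. The starting point is the structure of the diffusion matrix. Let $\sigma(z):=|z|^{1+\gamma/2}\mathbb P(z)$, so that $\Phi=\sigma\sigma^T=\sigma^2$. The fuzzy kernel factorizes as
\[
\kappa(x-x_*)\Phi(v-v_*)=\big(\sqrt{\kappa}(x-x_*)\sigma(v-v_*)\big)\big(\sqrt{\kappa}(x-x_*)\sigma(v-v_*)\big)^T .
\]
This suggests the following linear Fokker--Planck description of any weak solution $f$. The velocity is driven by a Brownian sheet $W(dx_*,dv_*,dt)$ on $\mathbb R^6$ with intensity $f(t)$. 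The coefficients are $\sqrt\kappa\,\sigma$ for the noise, $2\kappa b$ for the drift, and $v$ for the transport in $x$.

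Given two solutions $f,g$ and an optimal plan $\Pi_0\in\Gamma(f(0),g(0))$, we construct a coupled evolution $\Pi_t$ on $\mathbb R^{12}$. Analytically, $\Pi_t$ is a weak solution of a linear equation on $\mathbb R^{12}$ whose generator acts on $\psi(x,v,y,w)$. Its drift part is $2(\kappa b*f)\cdot\nabla_v\psi+2(\kappa b*g)\cdot\nabla_w\psi$, plus the transport terms $v\cdot\nabla_x\psi+w\cdot\nabla_y\psi$. Its second-order part is built by coupling the two noise sheets through a time-dependent optimal plan $R_t\in\Gamma(f(t),g(t))$, giving the matrix
\[
\int\begin{pmatrix}A A^T & A B^T\\ B A^T & BB^T\end{pmatrix}dR_t(x_*,v_*,y_*,w_*),
\]
where $A=\sqrt\kappa(x-x_*)\sigma(v-v_*)$ and $B=\sqrt\kappa(y-y_*)\sigma(w-w_*)$.

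Existence of such a $\Pi_t$ with marginals $f(t)$ and $g(t)$ would follow from a superposition or Trevisan-type argument, since $f,g$ solve the linear equations with the frozen coefficients. Alternatively, one can regularize the coefficients, pass to the limit using the $L^1_tL^p_vL^1_x$ bounds, and then identify the marginals through uniqueness of the linear equation. This step is technical, and we would handle it by mollification. The integrability $p=3/(3+\gamma)$ makes $|z|^{\gamma+1}*f$ and $|z|^{2+\gamma}*f$ controlled.

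Testing with $\psi=|x-y|^2+|v-w|^2$, which is admissible after truncation by the moment bound, and using $d_2^2(f(t),g(t))\le\int\psi\,d\Pi_t=:D(t)$ gives
\[
\frac{d}{dt}D \le 2D+\iint \Big(4(v-w)\cdot\big(\kappa b(x-x_*,v-v_*)-\kappa b(y-y_*,w-w_*)\big)+\|A-B\|_{HS}^2\Big)\,dR_t\,d\Pi_t .
\]
Here the $x$-transport cross term has been bounded by $2|x-y||v-w|\le D$, and $\kappa b$ abbreviates $\kappa(x-x_*)b(v-v_*)$. We then split $A-B$ and $\kappa b-\kappa b$ into a position increment and a velocity increment. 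The position increment uses $\sqrt\kappa$ Lipschitz: it contributes $\|\sqrt\kappa\|_{\mathrm{Lip}}^2(|x-y|^2+|x_*-y_*|^2)|v-v_*|^{2+\gamma}$. Since $|z|^{2+\gamma}\le1$ near the origin for $\gamma\ge-3$ (and it is bounded far away after weighing against moments), this piece is at most $C\,D$ plus a term in $d_2^2(f,g)$ coming from the $R_t$ integral. The starred variables are handled by optimality of $R_t$, since their cost is $d_2^2\le D$. The velocity increment is exactly the homogeneous Landau quantity of Fournier--Guerin, with $\kappa$ bounded. It satisfies the pointwise estimate
\[
\|\sigma(z)-\sigma(z')\|^2+4\ldots \le C\big(|z|^\gamma+|z'|^\gamma\big)|z-z'|^2,
\]
with $z=v-v_*$ and $z'=w-w_*$, using the favorable sign of the drift term in the symmetric combination.

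The main obstacle is controlling $\iint (|v-v_*|^{\gamma}+|w-w_*|^\gamma)\big(|v-w|^2+|v_*-w_*|^2\big)$. The singular weight $|z|^\gamma$ with $\gamma\le-2$ is not integrable against a merely $L^p$ density uniformly enough. Since $p=3/(3+\gamma)$ is borderline, $\int|v-v_*|^\gamma f(v_*)\,dv_*\lesssim \|f\|_{L^p_v}$ fails only logarithmically. We therefore follow Fournier's Coulomb argument. We use the symmetry of $R_t$ in the starred and unstarred variables to transfer the weight onto the marginal with the $L^p$ bound, and interpolate with a cutoff at level $\epsilon$. This yields an Osgood-type inequality
\[
D'(t)\le C\big(1+\|f(t)\|_{L^p_vL^1_x}+\|g(t)\|_{L^p_vL^1_x}\big)\,D\,(1+|\log D|).
\]
Integrating with the $L^1_t$ control of the coefficient, via Osgood's lemma, gives the modulus
\[
\omega(r)=r^{\exp(-C\int_0^T(\ldots))}
\]
for small $r$. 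This $\omega$ is continuous, increasing, and vanishes at $0$, as claimed.
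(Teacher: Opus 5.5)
Your route is the paper's. There, \cref{thm:fuzzy} is obtained from \cref{thm:gen_thm} with $\alpha=-\gamma$: a coupled linear evolution whose noise is correlated through the optimal plan $\Pi^*_t$ (your $R_t$), tested against the squared distance, with the singular terms split at a scale $\varepsilon$ and closed by Osgood. Three steps, however, are wrong or underspecified as written.

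First, the $\sqrt\kappa$-increment. For $-3\le\gamma<-2$ the weight $|z|^{2+\gamma}$ blows up at $z=0$ and is $\le 1$ only for $|z|\ge1$; this is the opposite of what you wrote, and moments play no role. The term is still harmless because $-(2+\gamma)<-\gamma$ makes it strictly subcritical, so \eqref{eq:sing_int} gives $\int|v-v_*|^{2+\gamma}\tilde f(v_*)\,dv_*\le C(1+\Vert\tilde f\Vert_{L^p})$. The same argument is needed for the drift increment $|\kappa(x-x_*)-\kappa(y-y_*)|\,|v-v_*|^{1+\gamma}$, which you did not write out. The paper instead absorbs both into \ref{ass:local_Lipchitz} via $|z|^{2+\gamma},|z|^{1+\gamma}\le 1+|z|^{\gamma}$.

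Second, the quantity you call the main obstacle, $\iint(|v-v_*|^\gamma+|w-w_*|^\gamma)(|v-w|^2+|v_*-w_*|^2)$, is not controlled by the $L^p$ norms; for $\gamma=-3$ it is typically $+\infty$. So you cannot apply the local Lipschitz bound everywhere and cut off afterwards; the cutoff must come first. On $\{|v-v_*|<\varepsilon\}\cup\{|w-w_*|<\varepsilon\}$, drop the increment structure and use the growth bounds $|b(z)|\lesssim|z|^{1+\gamma}$ and $\Vert\sigma(z)\Vert^2\lesssim|z|^{2+\gamma}$, which gain a positive power of $\varepsilon$. The mixed terms, such as $\mathds{1}_{|v-v_*|<\varepsilon}|w-w_*|^{1+\gamma}$ (cutoff on one side, singularity on the other), need the H\"older step from the proof of \eqref{eq:b_estimate2}. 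Only on the complement does the Lipschitz bound give the factor $1-\log\varepsilon$, after which $\varepsilon$ is chosen as a power of $D$. No sign property of the drift is used anywhere; absolute values suffice.

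Third, the identification of the marginals. Superposition yields a process for $f$ and one for $g$ separately, not a coupling driven by common noise. In either of your two routes, the real content is uniqueness for the frozen linear Fokker--Planck equations with these singular coefficients, and you only assert it. The paper obtains it from the white-noise SDE itself. Strong existence by Picard iteration and pathwise uniqueness are both driven by the same log-Lipschitz Osgood estimate \eqref{eq:diff_ineq_phi}. The Horowitz--Karandikar/Bhatt--Karandikar criterion then gives $\mathrm{Law}(X_t)=f_t$. With these repairs your argument coincides with the paper's.
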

%
In particular, this establishes uniqueness within $L^1_t L^p_v L^1_x$. See outline of the proof for an explicit expression of $\omega$. We next extend the discussion to the multispecies Landau equation.
\begin{definition}\label{defn:weak_sol_multi}
  Let $-3\leq \gamma \leq 0$. Consider curves ${t\in [0,T]\mapsto f^{(i)}(t)\in \mathcal P_2(\mathbb R^3)}$ of probability densities, with $m_2(f^{(i)}):=\sup_{[0,T]}\int |v^i|^2f^{(i)}(t)<\infty$ for $i=1,\ldots,N$. Further assume that, in the case $-3\leq \gamma <-1$,  we have $f^{(i)}\in L^1_tL^q_{v^i}$ for some $q>\frac{3}{4+\gamma}$. We say that $(f^{(i)})_{i=1}^N$ is a weak solution of the multiespecies homogeneous Landau equation \cref{eq:multiespecies_Landau} if
  \begin{align*}
    \int_{\mathbb{R}^{3}}\varphi f^{(i)}(t)\,dv^{i}& 
    = \int_{\mathbb{R}^{3}}\varphi f^{(i)}(0)\,dv^{i}\\
    &
    +\sum_{j=1}^{N}\frac{c_{ij}}{m_{i}}\int_{0}^{t}\int_{\mathbb{R}^{3}}\left[\left(\frac{1}{m_{i}}\Phi*f^{(j)}\right):D_{v^{i}}^{2}\varphi+\left(\frac{1}{m_{i}}
    +\frac{1}{m_{j}}\right)(b*f^{(j)})\cdot\nabla_{v^{i}}\varphi\right]f^{(i)}\,dv^{i}\,ds,
  \end{align*}
  for every $\varphi\in\mathcal{C}_{b}^{2}(\mathbb{R}^{3})$ and every $t\in [0,T]$.
\end{definition}
Next, we present our second main result on the multispecies homogeneous Landau equation.
\begin{theorem}\label{thm:multiespecies}
  Let $(f^{(i)})_{i=1}^{N}$, $(g^{(i)})_{i=1}^{N}$ be weak solutions of \eqref{eq:multiespecies_Landau} for $-3\leq \gamma\leq 0$. Assume that $f^{(i)},g^{(i)}\in L^1_tL^p_{v^i}$ where $p=\frac{3}{3+\gamma}\in [1,\infty]$. Then there exists $\omega:[0,\infty)\longrightarrow[0,\infty)$ continuous, increasing, with $\omega(0)=0$ such that
  \begin{equation*}
    \sup_{0\leq t\leq T} \sum_{i=1}^{N}d_{2}^{2}\big(f^{(i)}(t),g^{(i)}(t)\big) \leq
    \omega\left(
      \sum_{i=1}^{N}d_{2}^{2}\big(f^{(i)}(0),g^{(i)}(0)\big)
    \right).
  \end{equation*}
  The modulus of continuity $\omega$ depends on $T$, $\gamma$, $N$, $\max_{ij}|c_{ij}|$, $\max_{i}|m_{i}|^{-1}$, $\max_i m_2(f^{(i)})$, and $\Vert f^{(i)}\Vert_{L^1_tL^p_{v^i}}$, $\Vert g^{(i)}\Vert_{L^1_tL^p_{v^i}}$. 
\end{theorem}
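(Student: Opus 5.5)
We follow the Fournier--Guérin coupling strategy, written analytically. For each species $i$ we choose an optimal plan $\Pi^{(i)}_0\in\Gamma(f^{(i)}(0),g^{(i)}(0))$. We then evolve a coupling $\Pi^{(i)}(t)$ on $\mathbb R^3\times\mathbb R^3$ by a linear Fokker--Planck equation in the pair $(v,w)$, and we freeze the nonlinear fields using the given solutions. Set
\[
A^i(v,w,t)=\sum_j\frac{c_{ij}}{m_i^2}\Phi*\!f^{(j)}(v),\qquad B^i=\sum_j \frac{c_{ij}}{m_i}\Big(\frac1{m_i}+\frac1{m_j}\Big) b*f^{(j)} ,
\]
and define $\tilde A^i,\tilde B^i$ analogously with $g$. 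Since $\Phi*f=\sigma_f\sigma_f^{T}$ is not explicit, we do not use square roots. Instead we use the joint diffusion built from the representation $\Phi(v-v_*)=\mathbb P(v-v_*)|v-v_*|^{2+\gamma}$. We pair $v_*$ and $w_*$ through the optimal plan $\Pi^{(j)}(t)$ of species $j$. The joint generator acting on $\psi(v,w)$ is
\[
\sum_j\frac{c_{ij}}{m_i}\int\Big[\tfrac{1}{m_i}\big(\sigma(v-v_*)\otimes\sigma(w-w_*)\big)\text{-type cross terms}+\ldots\Big]d\Pi^{(j)}(v_*,w_*),
\]
where $\sigma(z)=|z|^{1+\gamma/2}\mathbb P(z)$. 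The diagonal blocks of this generator reproduce exactly the weak formulations of \cref{defn:weak_sol_multi} for $f^{(i)}$ and $g^{(i)}$. We solve this linear equation for measures, for example by regularizing the kernel and passing to the limit, which is legitimate because the marginals are fixed by uniqueness of the linear marginal problems. This gives $\Pi^{(i)}(t)\in\Gamma(f^{(i)}(t),g^{(i)}(t))$. Hence $D(t):=\sum_i\int|v-w|^2d\Pi^{(i)}(t)\ge \sum_i d_2^2(f^{(i)}(t),g^{(i)}(t))$, and $D(0)$ equals the initial distance.

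Next we test with $\psi=|v-w|^2$ and compute $\tfrac{d}{dt}D$. The result is a sum over $i,j$ of
\[
\iint \Big( \tfrac{2}{m_i}\,\big\|\sigma(v-v_*)-\sigma(w-w_*)\big\|^2 + 2c\,(v-w)\cdot\big(b(v-v_*)-b(w-w_*)\big)\Big)\,d\Pi^{(i)}(v,w)\,d\Pi^{(j)}(v_*,w_*).
\]
This is the key algebraic point: the pairs $(v,v_*)$ enter through the difference $z=v-v_*$, $z'=w-w_*$. We then use the standard estimates $|\sigma(z)-\sigma(z')|+|b(z)-b(z')|\lesssim (|z|^{\gamma/2}+|z'|^{\gamma/2})\ldots$. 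More precisely, we use $|\sigma(z)-\sigma(z')|^2+|z-z'||b(z)-b(z')|\lesssim (|z|^{\gamma}+|z'|^{\gamma})|z-z'|^2$ for $\gamma\le 0$. We also use $|z-z'|^2\le 2|v-w|^2+2|v_*-w_*|^2$. After symmetrizing in $(i,j)$, this yields
\[
D'(t)\lesssim \sum_{i,j}\iint\big(|v-v_*|^\gamma+|w-w_*|^\gamma\big)\big(|v-w|^2+|v_*-w_*|^2\big)d\Pi^{(i)}d\Pi^{(j)} .
\]

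The main obstacle is this singular weight when $\gamma<0$. We cannot directly bound $\int|v-v_*|^\gamma f^{(j)}(v_*)dv_*$ by $\|f^{(j)}\|_{L^p}$, since $p=3/(3+\gamma)$ is exactly the critical exponent: $|\cdot|^{\gamma}\in L^{p',\infty}$ but not $L^{p'}$. Following Fournier, we will therefore lose a logarithm. We split $|z|^\gamma=|z|^\gamma\mathbf 1_{|z|>\delta}+|z|^\gamma\mathbf 1_{|z|\le\delta}$. The far part is bounded by $\delta^{\gamma}$ times $D$. For the near part we bound $|v-w|^2$ by a truncation $\min(|v-w|^2,R^2)$ plus a moment tail, which is controlled by $m_2$. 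The near part in $v_*$ is then integrated against $f^{(j)}\in L^p$ by Hölder in the weak-type sense. This step is carried out carefully using the level-set splitting with a parameter depending on $D$. It gives
\[
D'(t)\le C\,h(t)\,D(t)\big(1+|\log D(t)|\big),\qquad h=1+\sum_i\|f^{(i)}(t)\|_{L^p}+\|g^{(i)}(t)\|_{L^p}\in L^1_t .
\]
When $\gamma=-3$ ($p=\infty$) this is the exact Fournier log-Lipschitz bound. For $\gamma>-2$ the Hölder step is nonsingular and the log is not needed.

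Finally, an Osgood-type comparison argument for $D'\le Ch\,D(1+|\log D|)$ gives $D(t)\le \omega(D(0))$ with
\[
\omega(r)= r^{\exp(-C\int_0^T h)}
\]
for small $r$, extended to be continuous and increasing, with $\omega(0)=0$. Its constants depend only on the quantities listed in \cref{thm:multiespecies}. Since $\sum_i d_2^2\le D$, the theorem follows. Species coupling only enters through the finite sums over $(i,j)$ with weights $c_{ij}/m_i$. This explains the dependence on $N$, $\max|c_{ij}|$ and $\max m_i^{-1}$.
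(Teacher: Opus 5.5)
Your overall architecture is the paper's. You freeze the nonlinear fields, correlate the two diffusions through the optimal plan (cross block $\int\sigma(v-v_*)\sigma(w-w_*)^T\,d\Pi^*$), test the linear equation for the coupling with $|v-w|^2$, and close with Osgood. Working species by species instead of with $\bigotimes_i f^{(i)}$ on $\mathbb R^{3N}$ is harmless, since the quadratic cost splits. Do keep the \emph{optimal} plan, not the evolving coupling, in the coefficients; your notation $\Pi^{(j)}(t)$ is used for both.

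\textbf{The gap.} The missing piece is the one estimate that makes the critical exponent work. You apply the Lipschitz bound $(|z|^\gamma+|z'|^\gamma)|z-z'|^2$ everywhere and then integrate $|v-v_*|^\gamma\mathds{1}_{|v-v_*|\le\delta}$ against $f^{(j)}\in L^p$. This fails:
\begin{itemize}
\item For $\gamma<0$ and $p=3/(3+\gamma)$ one has $\gamma p'=-3$, so this weight is not in $L^{p'}$. Pairing $L^{p',\infty}$ with $L^p$ is unbounded (one would need $L^{p,1}$). For example, $h(z)=|z|^{-3-\gamma}\log(1/|z|)^{-1}$ near $0$ lies in $L^p$ while $\int|z|^\gamma h=\infty$.
\item In the Coulomb case it is worse: $\int_{|z|\le\delta}|z|^{-3}f^{(i)}(v-z)\,dz=\infty$ at every Lebesgue point $v$ with $f^{(i)}(v)>0$. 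So the $j=i$ term of your intermediate bound for $D'$ is $+\infty$ as soon as $D>0$.
\item Truncating $|v-w|^2$ does not help, and the moment tail multiplies the same weight.
\item Your far-field bound $\delta^\gamma D$ is a power loss, not a logarithmic one. Any workable near-field bound forces $\delta$ to be (up to logarithms) a positive power of $D$, and then $\delta^\gamma D=D^\theta$ with $\theta<1$, for which Osgood fails. The logarithm has to come from H\"older on the annulus, $\int_{|z|\ge\delta}|z|^\gamma f\,dz\le C(1-\log\delta)\Vert f\Vert_{L^p}$, i.e.\ \eqref{eq:sing_int_log}.
\item Your remark that for $\gamma>-2$ the step is nonsingular and no log is needed is incorrect: $|z|^\gamma$ is borderline against $L^{3/(3+\gamma)}$ for every $\gamma<0$.
\end{itemize}

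\textbf{The missing idea.} In the near regions $\{|v-v_*|\le\varepsilon\}$ and $\{|w-w_*|\le\varepsilon\}$, drop the Lipschitz bound and use only the growth bounds of assumption \ref{ass:growth}: $|\sigma(z)-\sigma(z')|^2\lesssim|z|^{2+\gamma}+|z'|^{2+\gamma}$ and $|b(z)-b(z')|\lesssim|z|^{1+\gamma}+|z'|^{1+\gamma}$. These are subcritical. A double H\"older inequality then controls the near part. First isolate $\mathds{1}_{|v-v_*|\le\varepsilon}$ with a small exponent; its $f$-mass is at most $C\varepsilon^{-\gamma}\Vert f\Vert_{L^p}$. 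Then apply \eqref{eq:sing_int} to the remaining weight. This bounds the near part by a positive power of $\varepsilon$ times $C\max(\Vert f\Vert_{L^p},1)$, with no factor of $D$ needed and linearly in the $L^p$ norm. For the drift, Cauchy--Schwarz in $|v-w|$ adds a factor $D^{1/2}$. Only on the complement is assumption \ref{ass:local_Lipchitz} used, giving $C(1-\log\varepsilon)\max(\Vert f\Vert_{L^p},1)\,D$. Choosing $\varepsilon$ a power of $D$ yields $D'\le C\max_i(\Vert f^{(i)}\Vert_{L^p},\Vert g^{(i)}\Vert_{L^p},1)\Psi(D)$, so $L^1_tL^p$ suffices.

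You could try to repair your near-field term instead via Hardy--Littlewood--Sobolev and interpolation of the truncated-cost density between $L^1$ and $L^p$. That works only for $\gamma>-3$, and it produces superlinear dependence on the norms, so it needs more than $L^1$ in time.

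\textbf{Two further points.}
\begin{itemize}
\item For $-2<\gamma\le 0$ the diffusion growth bound (and, for $\gamma>-1$, the drift one) increases at infinity. The near-field H\"older step is then paired with second moments, as in \cref{lem:sigma_est_mod_soft}. That, not a truncation of $|v-w|^2$, is where $m_2$ enters $\omega$.
\item You invoke uniqueness of the frozen linear marginal problems to build the coupling. In the paper this uniqueness is itself derived from these same log-Lipschitz estimates, via pathwise uniqueness of the SDE plus a martingale-problem criterion. So it cannot be taken for granted before the key estimate is in place.
\end{itemize}
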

See the outline of the proof for an explicit expression of $\omega$.

\textbf{Comparison with existing results.} \cref{thm:fuzzy,thm:multiespecies} extend the conditional uniqueness results established in \cite{fournier2009well_posedness_soft_potentials,fournier2010uniqueness_Coulomb} for the homogeneous Landau equation. We note that in the non-Coulomb case, our result is slightly sharper, since we are able to reach the critical exponent ${p=\frac{3}{3+\gamma}}$, whereas \cite{fournier2009well_posedness_soft_potentials} shows uniqueness in $L^1_tL^p_v$ with $p>\frac{3}{3+\gamma}$. Finally, we note the related work \cite{FriesenMartinSundar2022BoltEnskog}, which establishes an analogue of  \cref{thm:fuzzy} for a delocalized version of the Boltzmann equation.

Regarding the fuzzy Landau equation, global existence and uniqueness of solutions with smooth initial data was established in \cite{gualdani2025FuzzyLandauFisher} for moderately soft potentials $\gamma \in [-\sqrt 7,0]$. In contrast, \cref{thm:fuzzy} accommodates highly irregular initial data and covers the regime of very soft potentials $\gamma \in [-3,-2]$. In relation to the multiespecies equation, well posedness of smooth solutions was shown in \cite{JunnWinterYoldas2025multiLandau} in the range $-\sqrt{8} \leq \gamma \leq 0$. On the other hand, \cref{thm:multiespecies} establishes uniqueness of weak solutions within $L^1_t L^p_{v^i}$ and applies to the full range of soft potentials $\gamma \in [-3,0]$.

It is worth noting that analogous conditional uniqueness results have been established for several well-known equations, including the $2$D Euler equation in vorticity form \cite{Yudovich1963Uniqueness}, the Vlasov–Poisson system \cite{Loeper2006UniquenessVlasovPoisson}, and the Patlak–Keller–Segel model \cite{carrillo2012uniquenessKS}. In fact, \cref{thm:fuzzy,thm:multiespecies} follow as direct consequences of a general conditional uniqueness result, which recovers these examples as particular cases within a unified framework. We refer to the outline below for further details.

\subsection{Outline of the proof}
We present two approaches for estimating the 2-Wasserstein distance in \cref{eq:fuzzy_Landau,eq:multiespecies_Landau}. The first extends the method of \cite{fournier2010uniqueness_Coulomb} to a broader class of nonlinear equations with singular coefficients, reinterpreting the original stochastic argument in a more analytic language. The second employs the symmetrization technique introduced in \cite{GS24} for the study of Fisher information.

\textbf{First approach: Coupling method.} The key observation is that \cref{eq:fuzzy_Landau,eq:multiespecies_Landau} share a common structure.  More precisely, both can be written as
\begin{equation}\label{eq:gen_eq}
  \partial_{t}f=\textnormal{div}\left(cf+(b*f)f\right)+D^{2}:\left(Mf+(A*f)f\right),
\end{equation}
where $c,b:\mathbb{R}^{n}\longrightarrow\mathbb{R}^{n}$, $A,M:\mathbb{R}^{n}\longrightarrow\mathbb{R}^{n\times n}$ and we denote ${D^2:(Mf) = \sum_{i,j} \partial_{x_ix_j}(M^{ij}f)}$. We prove \cref{thm:fuzzy,thm:multiespecies} by showing a uniqueness criterion for the more general equation \cref{eq:gen_eq} under the following assumptions on the coefficients.
\begin{enumerate}[label=(A\arabic*),ref=A\arabic*]
  \item \label{ass:PSD} The matrices $A$, $M$ can be expressed as ${A=\sigma\sigma^{T}}$, ${M=\mathfrak m \mathfrak m^{T}}$ for some $\sigma,\mathfrak m:\mathbb{R}^{n}\longrightarrow\mathbb{R}^{n\times n}$.
  \item \label{ass:Lipschitz} Both $c$ and $\mathfrak m$ are globally Lipschitz.
  \item \label{ass:growth} The variable $x$ can be decomposed as $x=(x^{1},\ldots,x^{k},x')\in(\mathbb{R}^{d})^{k}\times\mathbb{R}^{n-dk}$ so that
    \[
      |b(x)|\leq K\left(1+\sum_{i=1}^{k}|x^{i}|^{-\alpha+1}\right),\qquad
      |\sigma(x)|\leq K\left(1+\sum_{i=1}^{k}|x^{i}|^{-\frac{\alpha}{2}+1}\right),
    \]
    where $K>0$ and $0\leq \alpha \leq d$.
    \item \label{ass:local_Lipchitz} The coefficients $b$, $\sigma$ satisfy the following local Lipschitz estimate away from $x^{i}=0$:
    \begin{equation*}
      \frac{|b(x)-b(y)|}{|x-y|} + \frac{|\sigma(x)-\sigma(y)|^2}{|x-y|^2}\leq K\left(1+\sum_{i=1}^{k}\left(|x^{i}|^{-\alpha}+|y^{i}|^{-\alpha}\right)\right),
    \end{equation*}
\end{enumerate}
Our computations will require that the $x^i$ marginals of $f$ satisfy an integrability condition. We denote such marginals as
\begin{equation*}
  f^{(i)}:\mathbb{R}^{d}\longrightarrow\mathbb{R},\qquad f^{(i)}(x^{i}):=\int_{\mathbb{R}^{n-d}}f(x)\,dx^{-i},
\end{equation*}
where $dx^{-i}$ denotes integration with respect to all variables except $x^{i}$. The notion of weak solution to \eqref{eq:gen_eq} is a simple generalization of Definition \ref{defn:weak_sol_fuzzy}.
\begin{definition}
  Let $0\leq \alpha \leq d$. Consider a curve of probability densities $t\in [0,T]\mapsto f(t)\in \mathcal P_2(\mathbb R^n)$ with $m_2(f):=\sup_{[0,T]}\int|x|^2f(t)<\infty$. Further assume that, in the case $1 < \alpha \leq d$, we have $f^{(i)}\in L^1_tL^q_{x^i}$ for some $q>\frac{d}{d+1-\alpha}$. We say that $f$ is a weak solution of \eqref{eq:gen_eq} if 
  \begin{equation*}
    \int_{\mathbb{R}^{n}}\varphi f(t)\,dx=\int_{\mathbb{R}^{n}}\varphi f(0)\,dx+\int_{0}^{t}\int_{\mathbb{R}^{n}}\left[\Big(c+(b*f)\Big)\cdot\nabla\varphi+\Big(M+(A*f)\Big):D^{2}\varphi\right]\,dx\,ds,
  \end{equation*}
  for any $0\leq t\leq T$ and $\varphi\in \mathcal C^2_b$.
\end{definition}
Now, given two solutions $f(t,x)$, $g(t,y)$ of \eqref{eq:gen_eq}, we estimate $d_2^2(f(t),g(t))$ by constructing a curve of transport plans $\Pi(t)\in \Gamma(f(t),g(t))$, generated by a suitably chosen parabolic equation:
\begin{equation}\label{eq:plan_eq}
  \begin{cases}
     \partial_t\Pi = \textnormal{div}_{x,y}\left(\Big(\overline{c}+(\overline{b}*\Pi^{*})\Big)\Pi\right) + 
  D^2_{x,y}:\left(\Big(\overline{M}+(\overline{A}*\Pi^{*})\Big)\Pi\right),
  \\ 
  \Pi(0) = \Pi^*(0),
  \end{cases}
\end{equation}
where $\Pi^*(t)\in \Gamma(f(t),g(t))$ denotes the optimal transport plan and we introduced
\begin{align}
    \overline{c}(x,y)=\begin{pmatrix}c(x)\\c(y)\end{pmatrix}, &
    \quad\overline{M}(x,y)=\begin{pmatrix}\mathfrak m(x)\mathfrak m^{T}(x) & \mathfrak m(x)\mathfrak m^{T}(y)\\
    \mathfrak m(y)\mathfrak m^{T}(x) & \mathfrak m(y)\mathfrak m^{T}(y)\end{pmatrix},	\label{eq:bar_der}\\
    \overline{b}(x,y)=\begin{pmatrix}b(x)\\b(y)\end{pmatrix}, &
    \quad\overline{A}(x,y)=\begin{pmatrix}\sigma(x)\sigma^{T}(x) & \sigma(x)\sigma^{T}(y)\\
    \sigma(y)\sigma^{T}(x) & \sigma(y)\sigma^{T}(y)\end{pmatrix}.\label{eq:bar_der2}
\end{align}
Once the existence of a measure valued solution to \cref{eq:plan_eq} has been established, we take the test function ${\varphi(x,y)=\frac{1}{2}|x-y|^2}$, which yields
\begin{align*}
  \frac{1}{2}\frac{d}{dt}\int_{\mathbb R^{2n}}|x-y|^{2}\,d\Pi(t)
  &\leq C\int_{\mathbb R^{2n}}|x-y|^{2}\,d\Pi(t)
  +\int_{\mathbb R^{4n}}\left|\sigma(x-x_{*})-\sigma(y-y_{*})\right|^{2}\,d\Pi^{*}(t)\,d\Pi(t)\\
  &+\int_{\mathbb R^{4n}}|x-y|\left|b(x-x_{*})-b(y-y_{*})\right|\,d\Pi^{*}(t)\,d\Pi(t).
\end{align*}
Then the last two terms are estimated by splitting the integral into regions according to the size of $|x^i-x^i_*|$, $|y^i-y^i_*|$ and using assumptions \ref{ass:growth}, \ref{ass:local_Lipchitz}. In the end, we essentially show
\begin{equation*}
  \frac{d}{dt}d_2^2(f(t),g(t)) \leq C
  \max_{1\leq i\leq k}(\Vert f^{(i)}(t)\Vert_{L^p_{x^i}},\Vert g^{(i)}(t)\Vert_{L^p_{y^i}},m_2(f),m_2(g),1)
  \Psi\big(d_2^2(f(t),g(t))\big),
\end{equation*}
where $p=\frac{d}{d-\alpha}\in [1,\infty]$; $C$ depends on $\alpha$, $d$ and the constants from assumptions \ref{ass:Lipschitz}-\ref{ass:local_Lipchitz}; and
\begin{equation*}
  \Psi:[0,+\infty)\longrightarrow[0,+\infty),\qquad  \Psi(x):=\max(-x\log x, x) = 
  \begin{cases}
    -x\log x, & x<e^{-1},\\ 
    x, & x\geq e^{-1}.
  \end{cases}
\end{equation*}
This implies uniqueness due to Osgood's criterion. Moreover, we can explicitly solve the differential inequality to obtain the bound
\begin{equation}\label{eq:gen_bound}
  d_2^2(f(t),g(t)) \leq H\left(d_2^2(f(0),g(0)),
    C \int_{0}^{t}\max_{i}(\Vert f^{(i)}(t)\Vert_{L^p_{x^i}},\Vert g^{(i)}(t)\Vert_{L^p_{y^i}},m_2(f),m_2(g),1) \,ds
  \right),
\end{equation}
where $H(x,y)\geq 0$ is the continuous function with $H(0,\cdot)\equiv 0$ given by
\begin{equation}\label{eq:Omega}
  H(x,y):=
  \left\{ \begin{array}{ccc}
    x^{e^{-y}}, & 0\leq x\leq e^{-1}, & x\leq e^{-e^{y}},\\
    \frac{e^{y-1}}{-\ln x}, & 0\leq x\leq e^{-1}, & x\geq e^{-e^{y}},\\
    xe^{y}, & e^{-1}\leq x, & x\geq e^{y-1},\\
    e^{-\frac{1}{x}e^{-y-1}}, & e^{-1}\leq x, & x\leq e^{y-1}.
  \end{array}\right.
\end{equation}
In particular, if we choose the modulus of continuity
\begin{equation}\label{eq:mod_cont}
  \omega(x):=H\Big(x,
  C\max_{i}\big\{\Vert f^{(i)}(t)\Vert_{L^1_tL^p_{x^i}},\Vert g^{(i)}(t)\Vert_{L^1_tL^p_{x^i}},m_2(f),m_2(g),T\big\}\Big),
\end{equation}
then we have the following general result.
\begin{theorem}\label{thm:gen_thm}
  Consider $0\leq \alpha \leq d$. Let $f$, $g$ be weak solutions to \eqref{eq:gen_eq} with $f^{(i)},g^{(i)}\in L^1_tL^p_{x^i}$ for $p=\frac{d}{d-\alpha}\in [1,\infty]$. Then we have
  \begin{equation*}
    \sup_{0\leq t\leq T} d_2^2\big(f(t),g(t)\big) \leq \omega\left(d_2^2\big(f(0),g(0)\big)\right),
  \end{equation*}
  where the modulus of continuity $\omega$ is defined in \cref{eq:mod_cont}.
\end{theorem}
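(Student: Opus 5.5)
We follow the coupling strategy outlined above. Fix two weak solutions $f,g$ and, for a.e.\ $t$, an optimal plan $\Pi^*(t)\in\Gamma(f(t),g(t))$. To avoid selection issues we work on a partition $0=t_0<t_1<\dots<t_m=t$. On each subinterval $[t_k,t_{k+1})$ we freeze the nonlinearity and solve the \emph{linear} equation \eqref{eq:plan_eq}, with $\Pi^*$ replaced by a coupling of $f(s),g(s)$ built from $\Pi^*(t_k)$, and with initial datum the plan reached at $t_k$. Concretely, we set $\Pi(t_k)=\Pi^*(t_k)$; this restarting gives an upper bound, since we only need $d_2^2\le\int|x-y|^2d\Pi$ and the integrand at $t_k$ is then exactly $d_2^2$.

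The frozen coefficients are $\overline c+\overline b*\Pi^*$ and $\overline M+\overline A*\Pi^*$. The diffusion matrix has the factorized form $\Sigma\Sigma^T$ with
\[
\Sigma=\begin{pmatrix}\mathfrak m(x)&\sigma(x-\cdot)\\ \mathfrak m(y)&\sigma(y-\cdot)\end{pmatrix}
\]
in the $L^2(\Pi^*)$ sense. By \ref{ass:PSD} it is positive semidefinite, and \ref{ass:growth} with the integrability $q>\frac{d}{d+1-\alpha}$ makes the coefficients locally integrable against $f,g$. Measure-valued solutions then exist by regularizing $b,\sigma$, solving the resulting linear SDE/Fokker--Planck equation, and passing to the limit using tightness from second moments.

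\textbf{Marginals.} The key structural point is that each marginal of $\Pi$ solves the linear equation obtained from \eqref{eq:gen_eq} by freezing $b*f$, $A*f$; the same holds for $g$. This uses only the block structure \eqref{eq:bar_der}--\eqref{eq:bar_der2} and the fact that $\Pi^*$ has marginals $f,g$. Uniqueness for this linear equation in the class of measure solutions (via a superposition or duality argument) then identifies the marginals as $f(t)$ and $g(t)$. I expect this step to be the main technical obstacle. The coefficients are only integrable, not Lipschitz, so linear uniqueness must exploit the fact that $f$ itself is a solution. The safest route is to avoid it: mollify everything, obtain the inequality for approximate plans whose marginals converge to $f,g$, and conclude by lower semicontinuity of $d_2$.

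\textbf{Energy estimate.} Testing with $\varphi=\frac12|x-y|^2$ and using \ref{ass:Lipschitz} yields the differential inequality displayed in the outline. For the $\sigma$-term we split
\[
|\sigma(x-x_*)-\sigma(y-y_*)|^2
\]
according to whether $|x^i-x^i_*|,|y^i-y^i_*|\ge\varepsilon$ for all $i$.
\begin{itemize}
\item On the good region, \ref{ass:local_Lipchitz} gives a bound $K(1+\varepsilon^{-\alpha})(|x-y|^2+|x_*-y_*|^2)$. Rather than using this crude bound, we integrate $|x^i-x^i_*|^{-\alpha}\mathbf 1_{\ge\varepsilon}$ against $f^{(i)}$ via Hölder with $p=\frac{d}{d-\alpha}$. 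Since the conjugate of $|z|^{-\alpha}\mathbf 1_{\varepsilon\le|z|\le1}$ in $L^{p'}$ with $p'=d/\alpha$ has norm $\sim(\log\frac1\varepsilon)^{\alpha/d}$, this produces a factor $C\|f^{(i)}\|_{L^p}\log\frac1\varepsilon\cdot D$, where $D=\int|x-y|^2d\Pi+d_2^2$.
\item On the bad region, \ref{ass:growth} gives $|\sigma|^2\lesssim|z|^{2-\alpha}$. Its integral over $|z|<\varepsilon$ against $f^{(i)}\in L^p$ is $\lesssim\|f^{(i)}\|_{L^p}\varepsilon^{2}$.
\end{itemize}
The $b$-term is handled in the same way, using Cauchy--Schwarz for the factor $|x-y|$. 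Choosing $\varepsilon^2=D$ gives
\[
\frac{d}{dt}D\le C\,\Lambda(t)\,\Psi(D),\qquad \Lambda(t)=\max_i\big(\|f^{(i)}(t)\|_{L^p},\|g^{(i)}(t)\|_{L^p},m_2(f),m_2(g),1\big).
\]
The case $D\ge e^{-1}$ uses the crude linear bound.

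\textbf{Conclusion.} Letting the partition mesh go to zero, or simply applying the comparison on each interval, gives
\[
d_2^2(f(t),g(t))\le Y(t),\qquad Y'=C\Lambda\,\Psi(Y),\quad Y(0)=d_2^2(f(0),g(0)).
\]
Integrating this Osgood ODE explicitly separates variables: $\int dY/\Psi(Y)=\log\log$ for small $Y$ and $\log$ for large $Y$. The four regimes of these two branches reproduce exactly the function $H$ in \eqref{eq:Omega}. Since $H$ is increasing in its second argument, this yields $\omega$ as in \eqref{eq:mod_cont}.
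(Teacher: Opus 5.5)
Your architecture matches the paper's: a curve of plans solving the linear equation \eqref{eq:plan_eq}, the test function $\frac12|x-y|^2$, region splitting, and the Osgood ODE whose solution is $H$. However, the step that carries the weight, \cref{lem:existence_plan}, is left open, and your proposed way around it is circular.

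Compactness after regularizing $b,\sigma$ gives \emph{some} measure-valued solution of the linear equation. Its marginals solve the frozen linear Fokker--Planck equation with data $f_0,g_0$, and concluding that they equal $f_t,g_t$ is exactly uniqueness for that linear equation. The fallback ``mollify everything and use that the approximate marginals converge to $f,g$'' presupposes that same uniqueness, or the nonlinear uniqueness you are trying to prove. Mollifying $f,g$ themselves instead destroys the structure \ref{ass:PSD}--\ref{ass:local_Lipchitz}. Without $\Pi_t\in\Gamma(f_t,g_t)$ the energy estimate says nothing about $d_2^2(f_t,g_t)$.

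The missing idea is not that $f$ solves the equation. It is that, for fixed $f$, the frozen coefficients are log-Lipschitz with an $L^1_t$ constant. This is exactly \eqref{eq:A_estimate}--\eqref{eq:b_estimate}, with $\sigma(\cdot-x_*)$ treated as an $L^2(\Pi^*_t)$-valued coefficient through a white noise of covariance $\Pi^*_t\,dt$. The paper uses these bounds as follows:
\begin{enumerate}
\item Osgood's lemma gives pathwise uniqueness for \eqref{eq:SDEX}.
\item Picard iteration gives strong existence, with the cutoff $\Theta_\varepsilon$ used to make the Picard map continuous.
\item Hence the martingale problem for $\mathcal L_t$ is well posed from every Dirac mass.
\item The Horowitz--Karandikar/Bhatt--Karandikar theorem then gives uniqueness of measure-valued solutions of the linear forward equation, forcing $\mathrm{Law}(X_t)=f_t$, and likewise for $Y_t$.
\end{enumerate}
A superposition argument would also work, but it needs the same well-posedness input.

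The partition device is also not coherent as stated. For the marginals of $\Pi$ to be $f_s,g_s$, the coupling $\tilde\Pi_s$ in the coefficients must lie in $\Gamma(f_s,g_s)$ for every $s$. For the inequality to close you also need $\int|x_*-y_*|^2\,d\tilde\Pi_s\lesssim\int|x-y|^2\,d\Pi_s$. This is because, after the triangle inequality, you integrate $\Psi(|x_*-y_*|^2)$ and $|x_*-y_*|^2|x^i-x^i_*|^{-\alpha}$ against $\tilde\Pi_s$. A coupling ``built from $\Pi^*(t_k)$'' is undefined and gives no such control; your quantity $D$ silently assumes $\tilde\Pi_s$ is optimal.

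Taking $\tilde\Pi_s=\Pi^*_s$ at every time, as the paper does, closes the inequality for $\int|x-y|^2\,d\Pi_s$ alone via $d_2^2(f_s,g_s)\le\int|x-y|^2\,d\Pi_s$. So restarting at the $t_k$ is superfluous, and it does not spare you from choosing a coupling at every time anyway.

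A minor point: your bad-region bound $\varepsilon^2$ holds only when the cutoff and the singularity sit in the same block. Cross terms such as $|y^i-z^i|^{2-\alpha}$ on $\{|x^1-z^1|\le\varepsilon\}$ give only $\varepsilon^{\alpha/d}$ via a double H\"older inequality, which still suffices.
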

Note that when $b$ and $\sigma$ are globally Lipschitz we obtain unconditional uniqueness of \cref{eq:gen_eq}. This is a particular case of a more general result on nonlinear Fokker-Planck equations. See, for instance, \cite{Feng2018LandauTypeEqs} for a general statement. 

\textbf{Application to fuzzy Landau and multiespecies Landau.} \cref{thm:fuzzy,thm:multiespecies} are obtained as simple corollaries of the previous result. For the fuzzy Landau equation we have $n=6$, $d=3$, $k=1$ and replace $(x^1,x')\in \mathbb R^3\times \mathbb R^3$ by $(v,x)$. Moreover, the role of $c$ is taken by $(-v,\mathbf{0})$, we set $M\equiv \mathbf 0$, $b$ becomes $(\mathbf{0},-4\kappa (x)v|v|^{-\gamma})$, and $A$ is replaced by a $6\times 6$ matrix with all zeroes except for the block $\kappa(x)|v|^{2+\gamma}\mathbb{P}(v)$ corresponding to the operator $D^2_v$. For the choice of $\sigma$, it is convenient to note that $\mathbb P(z) = r(z)r(z)^\perp$, where
\begin{equation*}
  r(z):=|z|^{1+\frac{\gamma}{2}}
  \begin{pmatrix}z_{2} & -z_{3} & 0\\
    -z_{1} & 0 & z_{3}\\
    0 & z_{1} & -z_{2}
  \end{pmatrix}.
\end{equation*}
Then, it is not hard to see that in this particular example
\begin{align*}
  \left|\sigma(x,v)-\sigma(y,w)\right|&
  \leq C_{\gamma}(\Vert\sqrt{\kappa}\Vert_{\textnormal{Lip}}+1)\left(|x-y|+|v-w|\right)\left(|v|^{\frac{\gamma}{2}}+|v|^{\frac{\gamma}{2}+1}+|w|^{\frac{\gamma}{2}}+|w|^{\frac{\gamma}{2}+1}\right),\\
  \left|b(x,v)- b(y,w)\right|&
  \leq C_{\gamma}(\Vert\kappa\Vert_{\textnormal{Lip}}+1)\left(|x-y|+|v-w|\right)\left(|v|^{\gamma}+|v|^{\gamma+1}+|w|^{\gamma}+|w|^{\gamma+1}\right).
\end{align*}
So we verify assumptions \ref{ass:PSD}-\ref{ass:local_Lipchitz} when $-3\leq \gamma \leq -2$ with $\alpha = -\gamma$.

For the multiespecies Landau model we consider the equation satisfied by the tensor product $F:=f^{(1)}(t,v^1)\cdots f^{(N)}(t,v^N)$, which takes the form \eqref{eq:gen_eq} with $c \equiv \mathbf{0}$, $M \equiv \mathbf{0}$, and $b$, $A$ given by
\begin{equation*}
  -4\sum_{j=1}^N
  \begin{pmatrix}
    \frac{c_{1j}}{m_{j}}\left(\frac{1}{m_{1}}+\frac{1}{m_{j}}\right)v^{j}|v^{j}|^{-\gamma}\\
    \vdots\\
    \frac{c_{Nj}}{m_{j}}\left(\frac{1}{m_{1}}+\frac{1}{m_{j}}\right)v^{j}|v^{j}|^{-\gamma}
  \end{pmatrix},
  \quad 
  \sum_{j=1}^N
  \begin{pmatrix}\frac{c_{1j}}{m_{j}m_{1}}|v^{j}|^{2+\gamma}\mathbb{P}(v^{j}) & \mathbf{0} & \cdots & \mathbf{0}\\
  \mathbf{0} & \ddots &  & \vdots\\
  \vdots &  & \ddots & \mathbf{0}\\
  \mathbf{0} & \cdots & \mathbf{0} & \frac{c_{Nj}}{m_{j}m_{N}}|v^{j}|^{2+\gamma}\mathbb{P}(v^{j})
  \end{pmatrix}.
\end{equation*}
The verification of the hypothesis is a simple calculus exercise. Finally, it is not hard to see that if $\big(f^{(i)}\big)_{i=1}^N$ is a weak solution of the fuzzy Landau equation then $F=\bigotimes_{i=1}^Nf^{(i)}$ is a weak solution of \cref{eq:gen_eq}, so the application of \cref{thm:gen_thm} is justified.

\textbf{Other models included in the previous framework.} First we note that \cref{thm:gen_thm} can be extended to a broader class of equations. For instance, one may allow the coefficients $c$, $M$, $b$, and $A$ to depend on time, replace the convolution terms by more general interactions of the form $\int_{\mathbb{R}^d} b(x,x_*) f(x_*)\,dx_*$ and $\int_{\mathbb{R}^d} A(x,x_*) f(x_*)\,dx_*$, or consider the equation on a domain with boundary conditions that preserve mass. This allows us to recover a variety of models as particular cases of \cref{eq:gen_eq}. Let us briefly list some relevant examples.

\textit{The 2D incompressible Euler in vorticity form} models the evolution of the vorticity of a two dimensional inviscid fluid. The equation in the whole plane takes the form
\begin{equation}\label{eq:2D_Euler}
  \partial_{t}\omega_{t}+\textnormal{div}(u_{t}\omega_{t})=0,
\end{equation}
where $u_t$ can be expressed using Biot-Savart law as
\begin{equation*}
    u_{t}(x)=\int_{\mathbb R^2} K_{\textnormal{BS}}(x-x_{*})\omega(x_{*})\,dx_{*},\qquad 
    K_{\textnormal{BS}}(x)=\frac{1}{2\pi|x|^{2}}\begin{pmatrix}-x_{2}\\x_{1}\end{pmatrix}.
\end{equation*}
Therefore, if we assume that vorticity is nonnegative, bounded and integrable then \cref{thm:gen_thm} shows uniqueness within the class of solutions satisfying these properties. Uniqueness of bounded solutions to \cref{eq:2D_Euler} is a classical result which was proven in \cite{Yudovich1963Uniqueness} using different methods.

\textit{The Vlasov-Poisson equation} models the evolution of a distribution of charged particles which interact through the Coulomb force by neglecting collisional effects. The equation takes the form
\begin{equation*}
  \partial_{t}f_{t}+v\cdot\nabla_{x}f+E\cdot\nabla_{v}f=0.
\end{equation*}
When considering the whole $\mathbb R^3$, the electromagnetic force can be expressed as
\begin{equation*}
  E(x)=-\frac{1}{4\pi}\int_{\mathbb R^3}\frac{x-x_{*}}{|x-x_{*}|^{3}}\rho(x_{*})\,dx_{*},\qquad
  \rho(x) := \int_{\mathbb R^3}f(x,v)\,dv.
\end{equation*}
We note that this has exactly the same form as the drift term in the Landau equation. In this setting, uniqueness of bounded solutions was shown in \cite{Loeper2006UniquenessVlasovPoisson} also using optimal transportation techniques. In fact, the method we present can be seen as an extension of the one used in \cite{Loeper2006UniquenessVlasovPoisson}. In addition, \cref{thm:gen_thm} also applies to the two species Vlasov-Poisson system
\begin{equation*}
  \begin{cases}
    \partial_{t}f^{+}+v\cdot\nabla_{x}f^{+}+E\cdot\nabla_{v}f^{+}=0,\\
    \partial_{t}f^{-}+v\cdot\nabla_{x}f^{-}-E\cdot\nabla_{v}f^{-}=0,\\
    E=\displaystyle{-\frac{1}{4\pi}\int_{\mathbb R^3}\frac{x-x_{*}}{|x-x_{*}|^{3}}\left(\rho^{+}(x_{*})-\rho^{-}(x_{*})\right)\,dx_{*}},
  \end{cases}\quad 
  \rho^{\pm}(x):=\int_{\mathbb R^3} f^{\pm}(x,v)\,dv.
\end{equation*}
which models the evolution of electrons and ions. To the best of our knowledge, the uniqueness of bounded solutions for the previous two species system has not appeared explicitly in the literature, although it definitely seems expected.

\textit{The Patlak-Keller-Segel equation} models the evolution of a population of cells which move due to diffusion and to a chemical attractant. It takes the form
\begin{equation*}
  \begin{cases}
  \partial_{t}f+\textnormal{div}\left(f\nabla u\right)=\Delta f,\\
  -\Delta u+\alpha u=f.
  \end{cases}
\end{equation*}
Note that $\nabla u$ can be written as $\nabla \Phi * f$, where $\Phi$ is the fundamental solution of $-\Delta u+\alpha u$. Since $\Phi$ behaves like the fundamental solution of $-\Delta$ near $0$, we fall within the assumptions \ref{ass:PSD}-\ref{ass:local_Lipchitz} and \cref{thm:gen_thm} applies. Uniqueness of bounded solutions for these type of equations was proven in \cite{carrillo2012uniquenessKS}, and the method we present may be viewed as an extension of the one used in \cite{carrillo2012uniquenessKS}. Similarly to the Vlasov-Poisson equation \cref{thm:gen_thm} would also imply uniqueness of bounded solutions for a multiespecies version of Patlak-Keller-Segel and to Patlak-Keller-Segel on bounded domains with a boundary condition preserving mass. To our knowledge, these extensions of the uniqueness result of \cite{carrillo2012uniquenessKS} have not appeared in the literature, although they seem expected.

\textbf{Second approach: Symmetrization method.} We build on the symmetrization technique recently introduced in \cite{GS24}, where Fisher information was shown to decrease for the homogeneous Landau equation, leading to a resolution of its well-posedness theory. The key insight is that the derivative of Fisher information can be reformulated in terms of a linear parabolic equation, referred to as the lifted equation. Notably, the $2$-Wasserstein distance admits an analogous lifting property, suggesting the natural question of whether it also exhibits monotonicity properties.

This idea was developed in \cite{paper1} in collaboration with Delgadino, Gualdani, and Taskovic. Monotonicity was not obtained due to the absence of a suitable functional inequality. Nonetheless, the method provides a new proof of the conditional uniqueness criterion from \cite{fournier2009well_posedness_soft_potentials,fournier2010uniqueness_Coulomb}. In Section \ref{sec:symmetrization} we adapt the approach to the fuzzy Landau and the multiespecies Landau equations. Let us now outline the strategy in the case of the homogeneous Landau equation as presented in \cite{paper1}, which also captures the main ideas we use for \cref{eq:fuzzy_Landau,eq:multiespecies_Landau}.

Consider two solutions $f(t,v),g(t,v)$ of the homogeneous Landau equation. The main observation is that the time derivative of the $2$-Wasserstein distance $d_2^2(f(t),g(t))$ may be expressed as the derivative of $d_2^2$ along the flow of a linear parabolic equation in double variables. More precisely,
\begin{equation*}
  \frac{d}{dt}d_2^2(f(t),g(t)) = \frac{1}{2}\left.\frac{d}{d\tau}\right|_{\tau = 0}d_2^2(\tilde F(\tau),\tilde Q(\tau)),
\end{equation*}
where $\tilde F$, $\tilde G$ are solutions to linear equations in $\mathbb R^6$
\begin{equation*}
  \begin{cases}
    \partial_\tau \tilde F = Q(\tilde F)\\
    \tilde F(0,v,v_*) = f(t,v)f(t,v_*)
  \end{cases},\qquad
  \begin{cases}
    \partial_\tau \tilde G = Q(\tilde G)\\
    \tilde G(0,v,v_*) = g(t,v)g(t,v_*)
  \end{cases},
\end{equation*}
where $Q$ is an appropriate elliptic operator on $\mathbb R^6$. Therefore, estimating the growth of $d_2^2(f(t),g(t))$ reduces to the study of a linear model. Then the approach to estimate $\frac{d}{dt}d_2^2(f(t),g(t))$ is to replicate the following computation for the heat equation taking into account the structure of $Q$.
\begin{proposition*}
  Let $f$ and $g$ be two nonnegative, unit-mass solutions in $\mathbb R^n$ of the heat equation $\partial_t f = \Delta f$, $\partial_t g = \Delta g$. Then  $d_2^2(f,g)$ is non-increasing with dissipation
  \begin{equation*}
    \frac{d}{dt} d_2^2(f,g) = -\int_0^1\int_{\mathbb{R}^n}\left|D^2u\right|^2\rho(x)\,dx\,ds,
  \end{equation*}
  where $u$ is an optimal solution of the Hamilton–Jacobi equation \eqref{eq:HJ_formulation}, and $\rho(s,x)$, $0\leq s \leq 1$, denotes the $2$-Wasserstein geodesic joining $f$ and $g$, characterized by \eqref{eq:CE_heat}.
\end{proposition*}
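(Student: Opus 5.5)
We prove the unnumbered Proposition for the heat equation by combining the Benamou--Brenier (dynamic) description of the optimal transport between $f$ and $g$ with the first variation of $d_2^2$ along the two heat flows. Fix $t$ and let $\rho(s,x)$, $0\le s\le 1$, be the Wasserstein geodesic from $f(t)$ to $g(t)$. It solves the continuity equation
\begin{equation*}
  \partial_s\rho+\textnormal{div}(\rho\nabla u)=0 ,
\end{equation*}
where $u(s,x)$ is a (viscosity/Hopf--Lax) solution of $\partial_s u+\frac12|\nabla u|^2=0$. Here $\nabla u(0,\cdot)$ is the Brenier displacement $T-\mathrm{id}$, and $-\nabla u(1,\cdot)$ is the reverse displacement. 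We will use two facts about this pair.

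\begin{enumerate}
  \item Kantorovich duality gives $d_2^2(f,g)=2\big(\int u(1)g-\int u(0)f\big)$, with $u(0)$, $u(1)$ optimal potentials. The envelope theorem then says that, when $f$ and $g$ move, only the explicit dependence on the measures contributes to the derivative.
  \item Since $f$ and $g$ solve the heat equation,
  \begin{equation*}
    \frac{d}{dt}d_2^2(f,g)=2\Big(\int u(1)\Delta g-\int u(0)\Delta f\Big)
    =2\Big(\int \Delta u(1)\,g-\int\Delta u(0)\,f\Big)
    =2\int_0^1\frac{d}{ds}\int \Delta u(s)\,\rho(s)\,ds .
  \end{equation*}
\end{enumerate}

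Next we compute the $s$-derivative using the Hamilton--Jacobi and continuity equations. We get
\begin{equation*}
  \frac{d}{ds}\int\Delta u\,\rho=\int\Big(-\tfrac12\Delta|\nabla u|^2+\nabla\Delta u\cdot\nabla u\Big)\rho .
\end{equation*}
Bochner's identity $\frac12\Delta|\nabla u|^2=|D^2u|^2+\nabla u\cdot\nabla\Delta u$ reduces the integrand to $-|D^2u|^2\rho$. This yields
\begin{equation*}
  \frac{d}{dt}d_2^2=-2\int_0^1\!\!\int|D^2u|^2\rho\,dx\,ds,
\end{equation*}
up to the normalization of $u$ and of $d_2^2$ used in \eqref{eq:HJ_formulation}. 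With the convention that the velocity field is $\nabla u$ on $[0,1]$ and $d_2^2=\int_0^1\!\int|\nabla u|^2\rho$, the constant matches the one in the statement.

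The main obstacle is justification. The function $u$ is only semiconcave, so $D^2u$ is a measure in general, and the envelope-theorem differentiation of $d_2^2$ requires regularity. We would handle this in three steps.

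\begin{enumerate}
  \item Work at times $t>0$, where $f$ and $g$ are smooth and positive with Gaussian tails. By Caffarelli-type regularity, or via a Lipschitz bound on the Brenier map after an entropic or compactly supported approximation, the potentials are smooth along the open geodesic, $s\in(0,1)$.
  \item Verify the derivative identity through the integrated form
  \begin{equation*}
    d_2^2(t_2)-d_2^2(t_1)\le\int_{t_1}^{t_2}(\cdots),
  \end{equation*}
  obtained by plugging fixed potentials into the duality. This gives one-sided inequalities from each side, and these combine into equality almost everywhere.
  \item Approximate general data by smooth, compactly supported data. The dissipation term is lower semicontinuous, and this suffices for the monotonicity claim.
\end{enumerate}

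The Bochner step itself is routine once smoothness on $(0,1)$ is secured.
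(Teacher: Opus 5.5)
Your argument is the paper's own. You differentiate the Hamilton--Jacobi dual formula along the two heat flows, move the Laplacian onto the endpoint potentials, and rewrite the result as $\int_0^1\frac{d}{ds}\int\Delta u\,\rho\,dx\,ds$. You then conclude with the Hamilton--Jacobi and continuity equations plus Bochner's formula. Your justification sketch goes beyond the paper, which presents this computation only formally.

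One correction: your closing normalization remark is backwards. With velocity $\nabla u$ one has $\int_0^1\!\int|\nabla u|^2\rho\,dx\,ds=2\big(\int u(1)g-\int u(0)f\big)$, which is the true squared Wasserstein distance, so under that convention your $-2$ is the correct constant. The statement's $-1$ holds only if $d_2^2$ is read literally as the supremum in \eqref{eq:HJ_formulation}, and that supremum equals half the squared distance. The $-\frac12$ in the paper's last line is a slip in applying Bochner's formula.
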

\begin{proof}
  We use the dual formulation of the $2$-Wasserstein distance in terms of the Hamilton–Jacobi equation
  \begin{equation}\label{eq:HJ_formulation}
    d_{2}^{2}(f,g)=\sup\left\{ \int_{\mathbb R^n} u(1,x)g(x)\,dx-\int_{\mathbb R^n} u(0,x)f(x)\,dx:\partial_{s}u+\frac{1}{2}\left|\nabla u\right|^{2}=0\right\}.
  \end{equation}
  It is well known that an optimal potential $u$ generates the $2$-Wasserstein geodesic
  $(\rho(s,x))_{0\leq s\leq1}$ connecting $f$ and $g$ through the continuity equation
  \begin{equation}\label{eq:CE_heat}
    \partial_s \rho + \textnormal{div}(\rho \nabla u) = 0,\qquad \rho(0,\cdot) = f,\; \rho(1,\cdot) = g.
  \end{equation}
  See \cite{villani2003topicsOT} for more details on this formulation. Differentiating $d_2^2(f,g)$ yields
  \begin{align*}
    \frac{d}{dt}d_2^2(f,g) &
    =\int_{\mathbb{R}^{n}}u(1)\Delta g\,dx-\int_{\mathbb{R}^{n}}u(0)\Delta f\,dx
    =\int_{0}^{1}\frac{d}{ds}\int_{\mathbb{R}^{n}}\Delta u\rho\,dx\,ds \\
    & =-\int_{0}^{1}\int_{\mathbb{R}^{n}}\left(\frac{1}{2}\Delta\left|\nabla u\right|^{2}-\nabla\Delta u\cdot\nabla u\right)\rho\,dx\, ds 
    =-\frac{1}{2}\int_{0}^{1}\int_{\mathbb{R}^{n}}\left|D^{2}u\right|^{2}\rho\,dx\,ds,
  \end{align*}
  where we have used Bochner's formula $\frac{1}{2}\Delta\left|\nabla u\right|^{2}=\nabla\Delta u\cdot\nabla u+\left|D^{2}u\right|^{2}$.
\end{proof}
We note that this approach does not cover the Coulomb case $\gamma = -3$. In addition, a rigorous justification of the method requires the function $u$ to possess three derivatives. Such regularity can only be ensured when $f$ and $g$ are of class $\mathcal{C}^1$ and locally bounded away from zero. These drawbacks led us to adopt the first approach that we previously described. Nevertheless, we include this symmetrization method as a formal alternative for deriving comparable estimates.

\textbf{Organization of the paper.} Section \ref{sec:gen_thm} is devoted to the proof of \cref{thm:gen_thm}. A distinction arises between the cases $2\leq \alpha\leq d$, and $0<\alpha <2$, as assumption \ref{ass:growth} changes qualitatively with the sign of the exponents. Sections \ref{subsec:main_comput}-\ref{subsec:existence_plans} address the case $2\leq \alpha\leq d$. The core idea of the proof is presented in Section \ref{subsec:main_comput}. Section \ref{subsec:coeff_estimates} develops the necessary estimates on the coefficients, while Section \ref{subsec:existence_plans} establishes the existence of the required curve of transport plans \cref{eq:plan_eq}. The modifications needed to treat the range $0 < \alpha < 2$ are described in Section \ref{subsec:soft_pot_modif}. Finally, Section \ref{sec:symmetrization} contains the method based on a symmetrization argument, with Section \ref{subsec:sym_fuzzy} devoted to the fuzzy Landau equation, and Section \ref{subsec:sym_multi} to the multiespecies homogeneous Landau equation.

\textbf{Notation.} While deriving inequalities, we often write a generic constant $C$ whose value may change from line to line. Some of these are differential inequalities with respect to $t$ which, in all rigor, should be understood in their integrated version. From here on, we use the shorthand $f_t=f(t,\cdot)$, and let $m_2(f)$ be an upper bound for the second moments of $f$. The $x^{i}$ marginal of $f$ is denoted as $f^{(i)}$. We use four sets of variables $x$, $x_*$, $y$, $y_*$, reserving $x,x_*$ for expressions involving $f$; $y,y_*$ for expressions involving $g$; $x,y$ for integration with respect to $\Pi_t$; and $x_*,y_*$ for integration with respect to $\Pi^*_t$. We denote the optimal transport plan between $f_t$, $g_t$ as $\Pi^*_t$. Finally, we denote the tensor product and direct sum of functions as $(\varphi\otimes \psi)(x,x_*)=\varphi(x)\psi(x_*)$, $(\varphi\oplus \psi)(x,x_*)=\varphi(x)+\psi(x_*)$.

\section{A general uniqueness result} \label{sec:gen_thm}

For the rest of the section, we consider $f,g$ weak solutions of \cref{eq:gen_eq}.

\subsection{Main computation}\label{subsec:main_comput}
In this section we present the main computations which prove \cref{thm:gen_thm} in the case $2\leq \alpha\leq d$. The modifications required to treat $0<\alpha <2$ are detailed in Section \ref{subsec:soft_pot_modif}. The following lemma, which formalizes \cref{eq:plan_eq} and holds for all $0\leq \alpha \leq d$, is the key ingredient in the proof.
\begin{lemma}\label{lem:existence_plan}
  There exists a curve of probability measures $t\mapsto \Pi_t$ such that $\Pi_t\in \Gamma(f_t,g_t)$ and such that for any $\varphi \in \mathcal C^2_b(\mathbb R^n\times \mathbb R^n)$ we have that $t\mapsto \int \varphi\,d\Pi_t$ is absolutely continuous with
  \begin{equation*}
    \frac{d}{dt}\int\varphi\,d\Pi_{t} = \int\Big(\left(\overline{c}+(\overline{b}*\Pi_{t}^{*})\right)\cdot\nabla_{x,y}\varphi 
    + \left(\overline{M}+(\overline{A}*\Pi_{t}^{*})\right):D_{x,y}^{2}\varphi\Big)\,d\Pi_{t},
  \end{equation*}
  where $\Pi^*_t\in \Gamma(f_t,g_t)$ is the optimal transport plan and $\overline c$, $\overline b$, $\overline M$, $\overline A$ were defined in \cref{eq:bar_der,eq:bar_der2}.
\end{lemma}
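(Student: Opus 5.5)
We obtain $\Pi_t$ by approximation: smooth the singular coefficients and the time dependence, solve a linear Fokker--Planck equation in $\mathbb R^{2n}$ with frozen data, identify the marginals by uniqueness for linear equations, and pass to the limit by tightness.

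\textbf{Approximation.} Fix $\varepsilon>0$ and truncate the singular coefficients near the origin of each $x^i$ block. Let $b_\varepsilon$, $\sigma_\varepsilon$ be globally Lipschitz, agree with $b,\sigma$ where $|x^i|\ge\varepsilon$ for all $i$, and satisfy \ref{ass:growth} uniformly in $\varepsilon$. Set $A_\varepsilon=\sigma_\varepsilon\sigma_\varepsilon^T$, and build $\overline b_\varepsilon$, $\overline A_\varepsilon$ as in \cref{eq:bar_der2}. The optimal plans $\Pi^*_t$ need not depend regularly on $t$, so we also mollify in time, or take a piecewise constant-in-time choice. We need $t\mapsto\Pi^*_t$ measurable; this follows from a measurable selection theorem for optimal plans. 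Then the frozen coefficients
\begin{equation*}
\beta_\varepsilon(t,x,y)=\overline c+\overline b_\varepsilon*\Pi^*_t,\qquad a_\varepsilon(t,x,y)=\overline M+\overline A_\varepsilon*\Pi^*_t
\end{equation*}
are measurable in $t$ and Lipschitz in $(x,y)$. By \ref{ass:PSD}, $a_\varepsilon$ has the Gram structure
\begin{equation*}
a_\varepsilon=\begin{pmatrix}\mathfrak m(x)\\ \mathfrak m(y)\end{pmatrix}\begin{pmatrix}\mathfrak m(x)\\ \mathfrak m(y)\end{pmatrix}^T+\int\begin{pmatrix}\sigma_\varepsilon(x-x_*)\\ \sigma_\varepsilon(y-y_*)\end{pmatrix}\begin{pmatrix}\sigma_\varepsilon(x-x_*)\\ \sigma_\varepsilon(y-y_*)\end{pmatrix}^Td\Pi^*_t .
\end{equation*}
So it has a Lipschitz square root, obtained by treating $d\Pi^*_t$ as a noise space (a Brownian sheet, or finitely many Brownian motions after discretizing $\Pi^*_t$). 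The SDE from the initial law $\Pi^*_0$ is therefore well posed, and its law gives a solution $\Pi^\varepsilon_t$ of the linear equation.

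\textbf{Marginals.} Testing with $\varphi(x)$ alone, the $x$-marginal $f^\varepsilon_t$ of $\Pi^\varepsilon_t$ solves the \emph{linear} equation with coefficients $c+b_\varepsilon*f_t$ and $M+A_\varepsilon*f_t$. Here we use that $\Pi^*_t$ has first marginal $f_t$ and that the upper-left block of $\overline A$ involves only $x,x_*$. The same holds for the $y$-marginal with $g$.

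At $\varepsilon=0$, $f$ itself solves this linear equation. We would like to conclude $f^\varepsilon\to f$, but the coefficients are singular at $\varepsilon=0$, so uniqueness for the linear problem is not directly available. We avoid the issue by letting $\varepsilon\to0$ first and identifying marginals afterwards. Alternatively, we can use the superposition principle (Figalli/Trevisan) for the linear equation with the true coefficients. Its coefficients are integrable against $f$ because $f^{(i)}\in L^1_tL^q$ and \ref{ass:growth} make $b*f$ and $A*f$ locally bounded in the appropriate sense. Superposition gives a martingale solution for $X$ with law $f_t$, and similarly for $Y$. Since the two SDEs share the same noise by the Gram structure, we would construct a joint martingale problem for $(X,Y)$.

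\textbf{Passage to the limit.} The cleanest route is to solve the martingale problem for $(X,Y)$ with the true coefficients, as follows.
\begin{itemize}
\item Second moments of $\Pi^\varepsilon_t$ are bounded uniformly in $\varepsilon$ by \ref{ass:growth} together with the $L^q$ bounds; this gives tightness.
\item Extract a limit $\Pi_t$ and pass to the limit in the weak formulation. This requires uniform integrability of $|b_\varepsilon*\Pi^*|$ and $|A_\varepsilon*\Pi^*|$ against $\Pi^\varepsilon_t$.
\end{itemize}

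\textbf{Main obstacle.} The hard step is showing that the limit $\Pi_t$ has marginals exactly $f_t$ and $g_t$, which requires uniqueness of the linear Fokker--Planck equation $\partial_t\mu=\mathrm{div}((c+b*f_t)\mu)+D^2:((M+A*f_t)\mu)$ among measure solutions. We would prove this with the coupling estimate of \cref{thm:gen_thm}, applied to the linear problem where the convolution is frozen. The log-Lipschitz-type bound from \ref{ass:local_Lipchitz}, combined with $f^{(i)}\in L^1_tL^q$, controls $\nabla(b*f)$ and $\nabla\sigma$ in the averaged sense needed for an Osgood argument. That gives uniqueness, hence the marginals are $f_t$ and $g_t$.
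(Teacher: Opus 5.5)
There is a genuine gap, and it sits exactly at the step you call the main obstacle. You need uniqueness of measure-valued solutions of the frozen linear equation $\partial_t\mu=\mathrm{div}((c+b*f_t)\mu)+D^2:((M+A*f_t)\mu)$. Without it, nothing identifies the marginals of your limit plan with $f_t$ and $g_t$; letting $\varepsilon\to0$ first only postpones the problem.

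Your proposed proof of that uniqueness is circular. The coupling estimate of \cref{thm:gen_thm} begins by testing \cref{lem:existence_plan} against $\frac12|x-y|^2$. So it presupposes a curve $\Pi_t\in\Gamma(\mu_t,f_t)$ evolving by the doubled equation. For two arbitrary weak solutions $\mu_t$, $f_t$ of the linear problem, producing such a curve with the prescribed marginals requires precisely the uniqueness you want to prove. You give no other way to compare two such weak solutions that avoids a coupling.

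The missing ingredient is probabilistic, and it is what the paper uses.
\begin{itemize}
\item Solve \cref{eq:SDEX} directly with the true singular frozen coefficients. Picard iteration in $L^\infty([0,T];L^2(\Omega))$ with the log-Lipschitz bounds of \cref{lem:A_b_estimates} gives strong existence, and Osgood gives pathwise uniqueness. These bounds need the critical integrability $f^{(i)}\in L^1_tL^p$ with $p=\frac{d}{d-\alpha}$, not merely $L^q$.
\item Pathwise uniqueness gives well-posedness of the martingale problem for every Dirac initial datum.
\item A Horowitz--Karandikar/Bhatt--Karandikar type theorem then gives uniqueness for the forward equation, so $\mathrm{Law}(X_t)=f_t$. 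Equivalently, one can combine superposition with uniqueness in law.
\end{itemize}

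Your superposition remark points this way. But superposition alone only produces some martingale solution with marginals $f_t$. The decisive fact is uniqueness in law, which your proposal never establishes.

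Once you have it, the $\varepsilon$-truncation, time mollification and tightness argument become unnecessary. Drive $X$ and $Y$ by the same Brownian motion and the same white noise with covariance $d\Pi^*_t\,dt$. By It\^o's formula, the law of $(X_t,Y_t)$ is then already the required $\Pi_t$. No joint martingale problem for $(X,Y)$ is needed, because the marginals are identified one at a time.
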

Note that we do not claim uniqueness of $\Pi_t$ nor is it used in what follows. We also require the following estimates, which generalize Lemmas 6 and 7 of \cite{fournier2010uniqueness_Coulomb}.

\begin{lemma}\label{lem:A_b_estimates}
  Given $2\leq \alpha \leq d$ and $p=\frac{d}{d-\alpha}\in [1,\infty]$, we have:
  \begin{align}
    \int\left|\sigma(x-z)-\sigma(y-z)\right|^{2}f_t(z)\,dz &
    \leq C\max_{1\leq i \leq k}(\Vert f_{t}^{(i)}\Vert _{L^{p}},1)\Psi(|x-y|^{2}),
    \label{eq:A_estimate}\\
    \int\left|b(x-z)-b(y-z)\right|f_{t}(z)\,dz & 
    \leq C\max_{1\leq i \leq k}(\Vert f_{t}^{(i)}\Vert _{L^{p}},1)\Psi(|x-y|),
    \label{eq:b_estimate}\\
    \int\left|b(x-x_{*})-b(y-y_{*})\right||x-y|\,d\Pi_{t}^{*}\,d\Pi_{t} &
    \leq C\max_{1\leq i \leq k}( \Vert f_{t}^{(i)}+g_{t}^{(i)}\Vert _{L^{p}},1)
    \Psi\left(\int|x-y|^{2}\,d\Pi_{t}\right)
    \label{eq:b_estimate2},
  \end{align}
  where $\Psi(x)=\max\left\{ x,-x\log x\right\}$ and $C$ depends on $n$, $d$, $k$ and the constants in assumptions \ref{ass:growth},~\ref{ass:local_Lipchitz}.
\end{lemma}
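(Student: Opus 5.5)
We prove \eqref{eq:A_estimate} and \eqref{eq:b_estimate} with the same splitting; \eqref{eq:b_estimate2} then follows from \eqref{eq:b_estimate} by adding and subtracting an intermediate term and applying Jensen.

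\textbf{Splitting for \eqref{eq:b_estimate}.} Set $\delta=|x-y|$. If $\delta\geq e^{-1}$, we do not use any difference structure. Assumption \ref{ass:growth} gives
\[
|b(x-z)|+|b(y-z)|\leq C\Big(1+\sum_i |x^i-z^i|^{1-\alpha}+|y^i-z^i|^{1-\alpha}\Big),
\]
and $\int |x^i-z^i|^{1-\alpha} f^{(i)}(z^i)\,dz^i\leq C(\Vert f^{(i)}\Vert_{L^p}+1)$. This holds because $|\cdot|^{1-\alpha}\mathbf 1_{|\cdot|\leq1}\in L^{p'}(\mathbb R^d)$ when $p'(\alpha-1)<d$, i.e. $(\alpha-1)d/\alpha<d$, which is always true. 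Since $\delta\geq e^{-1}$, the resulting constant is bounded by $C\delta=C\Psi(\delta)$.

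If $\delta<e^{-1}$, we split $z$-space into the near region $N=\bigcup_i\{|x^i-z^i|\leq 2\delta\}$ and its complement. On $N$ we again use \ref{ass:growth}. The key bound is
\[
\int_{|x^i-z^i|\leq 3\delta}|x^i-z^i|^{1-\alpha}f^{(i)}\,dz^i\leq \Vert f^{(i)}\Vert_{L^p}\,\big\Vert |\cdot|^{1-\alpha}\mathbf 1_{B_{3\delta}}\big\Vert_{L^{p'}}\leq C\Vert f^{(i)}\Vert_{L^p}\,\delta .
\]
This uses $p'=d/\alpha$: $\int_{B_r}|z|^{(1-\alpha)d/\alpha}\,dz\sim r^{d/\alpha}$, so the $L^{p'}$ norm scales like $r$. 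On $N$ we have $|y^i-z^i|\leq 3\delta$, so the $y$ term is handled identically. The bounded part of \ref{ass:growth} contributes $\int_N f\leq \Vert f^{(i)}\Vert_{L^p}|B_{2\delta}|^{1/p'}\leq C\delta^{\alpha}\le C\delta$.

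On the complement of $N$, every $|x^i-z^i|>2\delta$, so $|y^i-z^i|\geq |x^i-z^i|/2$. Assumption \ref{ass:local_Lipchitz} then gives
\[
|b(x-z)-b(y-z)|\leq C\delta\Big(1+\sum_i|x^i-z^i|^{-\alpha}\Big).
\]
We split further at $|x^i-z^i|=1$. The far part contributes $C\delta$. The annulus $2\delta<|x^i-z^i|<1$ is the critical piece, since $|\cdot|^{-\alpha}\notin L^{p'}$ there (indeed $\alpha p'=d$). For $p<\infty$ we would use Hölder:
\[
\big\Vert |\cdot|^{-\alpha}\mathbf 1_{2\delta<|\cdot|<1}\big\Vert_{L^{p'}}=\Big(\int_{2\delta}^1 r^{-d}r^{d-1}\,dr\Big)^{1/p'}=(\log(1/2\delta))^{\alpha/d}.
\]
This gives $\delta\log(1/\delta)^{\alpha/d}\leq C\Psi(\delta)$. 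When $p=\infty$ ($\alpha=d$), we get $\Vert f\Vert_\infty\,\delta\log(1/\delta)$ directly. Either way the annulus term is at most $C\Psi(\delta)$. This critical endpoint is exactly where the logarithm, and hence $\Psi$, enters; it is the main place where care is needed.

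\textbf{Estimate \eqref{eq:A_estimate}.} The argument is identical with exponents halved and squared: $|\sigma|^2\lesssim 1+|\cdot|^{2-\alpha}$, and the local Lipschitz bound in \ref{ass:local_Lipchitz} is $\delta^2(1+|\cdot|^{-\alpha})$. We split at radius $2\delta$ as before. Since $\alpha\geq2$, we have $|\cdot|^{2-\alpha}\leq C|\cdot|^{\ldots}$ integrable near $0$, and the near region contributes $\Vert |\cdot|^{2-\alpha}\mathbf 1_{B_{3\delta}}\Vert_{L^{p'}}\sim\delta^{2}$. The annulus again yields $\delta^2\log(1/\delta)\lesssim\Psi(\delta^2)$, because $\log(1/\delta^2)=2\log(1/\delta)$.

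\textbf{Estimate \eqref{eq:b_estimate2}.} We split
\[
|b(x-x_*)-b(y-y_*)|\leq |b(x-x_*)-b(y-x_*)|+|b(y-x_*)-b(y-y_*)|.
\]
For the first term, integrating in $\Pi^*_t$ first uses only the $x_*$-marginal $f_t$, so \eqref{eq:b_estimate} gives $|x-y|\,C(\cdot)\Psi(|x-y|)$. Since $s\Psi(s)\leq C\Psi(s^2)$ and $\Psi$ is concave, Jensen in $\Pi_t$ bounds this by $C\Psi\big(\int|x-y|^2d\Pi_t\big)$.

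For the second term, integrating first in $\Pi_t$ uses only the $y$-marginal $g_t$. By the symmetry $b(y-x_*)-b(y-y_*)$ in the variables $(x_*,y_*)$, \eqref{eq:b_estimate} with $g$ in place of $f$ bounds this by $C\Psi(|x_*-y_*|)$ after Cauchy–Schwarz against $|x-y|$. The remaining factor $\int|x_*-y_*|^2d\Pi^*_t=d_2^2\leq\int|x-y|^2d\Pi_t$ because $\Pi^*_t$ is optimal. We conclude using Young's inequality $ab\leq a^2+b^2$, $\Psi(s)^2\leq C\Psi(s^2)$, and the monotonicity of $\Psi$.
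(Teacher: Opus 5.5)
Your proof of \eqref{eq:b_estimate2} has a genuine gap in the second piece of your splitting, and that piece is the real content of the estimate. The first piece is fine. After integrating in $\Pi^*_t$ only $x_*$ appears, so \eqref{eq:b_estimate}, $s\Psi(s)\le\Psi(s^2)$ and Jensen give the bound.

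The trouble is in $\int|x-y|\,|b(y-x_*)-b(y-y_*)|\,d\Pi^*_t\,d\Pi_t$. Here the weight $|x-y|$ depends on both $\Pi_t$-variables, so integrating in $\Pi_t$ does not reduce to $g_t(y)\,dy$, and \eqref{eq:b_estimate} cannot be invoked.

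\begin{itemize}
\item Stripping the weight by Cauchy--Schwarz needs $\int|b(y-x_*)-b(y-y_*)|^2g_t(y)\,dy$. This is infinite in general when $\alpha\ge2$: near $y=x_*$ the integrand is of order $|y-x_*|^{2-2\alpha}$, and $2\alpha-2\ge\alpha$, so it is not in $L^{d/\alpha}_{\mathrm{loc}}$. For Coulomb this is $|w|^{-4}$ in $\mathbb R^3$.
\item Using the singular factor as a Cauchy--Schwarz weight instead yields only $D^{1/2}\Psi(D^{1/2})^{1/2}\approx D^{3/4}\log^{1/2}(1/D)$, where $D=\int|x-y|^2\,d\Pi_t$. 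This is not an Osgood modulus.
\item Your closing step is also incorrect: $\Psi(s)^2\le C\Psi(s^2)$ fails for small $s$, since $s^2\log^2(1/s)$ beats $2s^2\log(1/s)$. So Young's inequality would leave $D\log^2(1/D)$, again not Osgood.
\end{itemize}

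Whichever intermediate point you insert, one piece carries the difference in the $\Pi^*_t$-variables and the weight in the $\Pi_t$-variables. So \eqref{eq:b_estimate2} is not a corollary of \eqref{eq:b_estimate}.

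What works, and what the paper does, is to split the four-variable integral directly into $\{\min_i|x^i-x^i_*|\le\varepsilon\}$, $\{\min_i|y^i-y^i_*|\le\varepsilon\}$ and the rest.

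\begin{itemize}
\item On the rest, apply assumption \ref{ass:local_Lipchitz} to $b(x-x_*)-b(y-y_*)$ and use $|x-y|(|x-y|+|x_*-y_*|)\le2(|x-y|^2+|x_*-y_*|^2)$. This bounds the integrand, up to harmless terms, by $(|x-y|^2+|x_*-y_*|^2)\sum_i(|x^i-x^i_*|^{-\alpha}+|y^i-y^i_*|^{-\alpha})$. Each product now pairs a squared distance from one plan with a singular factor containing a variable of the other plan. Integrate that variable against its marginal via \eqref{eq:sing_int_log}, and close with $\int|x_*-y_*|^2d\Pi^*_t\le\int|x-y|^2d\Pi_t$.
\item On the near regions, use \ref{ass:growth} and integrate the singular factor against $\Pi^*_t$. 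H\"older separates the indicator from a singular factor that may involve $y_*$ rather than $x_*$. Then bound $\int|x-y|\,d\Pi_t\le D^{1/2}$ and choose $\varepsilon$ as a power of $D$.
\end{itemize}

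A smaller omission affects \eqref{eq:A_estimate} and \eqref{eq:b_estimate} when $k\ge2$. On $\{|x^1-z^1|\le2\delta\}$, assumption \ref{ass:growth} also produces $|x^i-z^i|^{1-\alpha}$ and $|y^i-z^i|^{1-\alpha}$ with $i\ne1$, and these are not localized. Your claim that $|y^i-z^i|\le3\delta$ on $N$ holds only for the index defining the piece, and $f_t$ is controlled only through its marginals.

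This is easy to repair. Split at $|x^i-z^i|=\delta$ and use $\alpha\ge1$:
\[
\int_{\{|x^1-z^1|\le2\delta\}}|x^i-z^i|^{1-\alpha}f_t\,dz\le\int_{|x^i-z^i|\le\delta}|x^i-z^i|^{1-\alpha}f^{(i)}_t\,dz^i+\delta^{1-\alpha}\int_{|x^1-z^1|\le2\delta}f^{(1)}_t\,dz^1\lesssim\max_i(\Vert f^{(i)}_t\Vert_{L^p},1)\,\delta .
\]
The same works with $\delta^2$ for $\sigma$, since $\alpha\ge2$. The paper handles these cross terms by H\"older instead, which loses a power of $\varepsilon$, and compensates by taking $\varepsilon$ a power of $|x-y|$ rather than comparable to it.

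With this repair, your proofs of \eqref{eq:A_estimate} and \eqref{eq:b_estimate} are correct: the near radius comparable to $|x-y|$ and the bound $|y^i-z^i|\ge\frac12|x^i-z^i|$ off the near region both work.
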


We now turn to the main part of the proof.

\begin{proof}[Proof of \cref{thm:gen_thm} assuming \cref{lem:existence_plan,lem:A_b_estimates}]
  We show a differential inequality of the form
  \begin{equation*}
    \frac{d}{dt}\int|x-y|^{2}\,d\Pi_{t}\leq C\max_{1\leq i \leq k}( \Vert f_{t}^{(i)}\Vert _{L^{p}},\Vert g_{t}^{(i)}\Vert _{L^{p}},1) \Psi\left(\int|x-y|^{2}\,d\Pi_{t}\right),
  \end{equation*}
  Where $C>0$ depends on $n$, $d$, $k$ and the constants in the assumptions. Using $\varphi(x,y)=\frac{1}{2}|x-y|^2$ as a test function in \cref{lem:existence_plan} we obtain
  \begin{align*}
    \frac{1}{2}\frac{d}{dt}\int|x-y|^{2}\,d\Pi_{t} &
    =\int\left[\left(c(x)-c(y)\right)\cdot(x-y)+\left|\mathfrak m(x)-\mathfrak m(y)\right|^{2}\right]\,d\Pi_{t}	\\
    & +\int\left[\left(b(x-x_{*})-b(y-y_{*})\right)\cdot(x-y)+\left|\sigma(x-x_{*})-\sigma(y-y_{*})\right|^{2}\right]\,d\Pi_{t}^{*}\,d\Pi_{t}.
  \end{align*}
  Assumption \ref{ass:Lipschitz} on $c$, $\mathfrak m$ allows us to bound the first term by a multiple of $\int|x-y|^{2}\,d\Pi_{t}$. The term involving $b$ has already been taken care of in \cref{lem:A_b_estimates}. It only remains to estimate
  \begin{equation*}
    I:=\int\left|\sigma(x-x_{*})-\sigma(y-y_{*})\right|^{2}\,d\Pi_{t}^{*}\,d\Pi_{t}.
  \end{equation*}
  We do this by simply applying a triangle inequality and using that $\Pi_t,\Pi_t^*\in \Gamma(f_t,g_t)$:
  \begin{align*}
    \frac{1}{2}I &\leq  \int\left|\sigma(x-x_{*})-\sigma(x-y_{*})\right|^{2}\,d\Pi_{t}^{*}\,d\Pi_{t}
    + \int\left|\sigma(x-y_{*})-\sigma(y-y_{*})\right|^{2}\,d\Pi_{t}^{*}\,d\Pi_{t}\\
    & =\int\left(\int\left|\sigma(x-x_{*})-\sigma(x-y_{*})\right|^{2}f(x)\,dx\right)\,d\Pi_{t}^{*}\\
    & +\int\left(\int\left|\sigma(x-y_{*})-\sigma(y-y_{*})\right|^{2}g(y_{*})\,dy\right)\,d\Pi.
  \end{align*}
  Now using \cref{lem:A_b_estimates}, the concavity of $\Psi$, and Jensen's inequality yields
  \begin{align*}
    I & \leq C\max_{1\leq i \leq k}(\Vert f_{t}^{(i)}\Vert _{L^{p}},\Vert g_{t}^{(i)}\Vert _{L^{p}},1)
    \left[\int\Psi(|x_{*}-y_{*}|^{2})\,d\Pi_{t}^{*}+\int\Psi(|x-y|^{2})\,d\Pi_{t}\right]\\
    &\leq C\max_{1\leq i \leq k}(\Vert f_{t}^{(i)}\Vert _{L^{p}},\Vert g_{t}^{(i)}\Vert _{L^{p}},1)\Psi\left(\int|x-y|^{2}\,d\Pi_{t}\right).
  \end{align*}
  This shows the desired differential inequality:
  \begin{equation*}
    \frac{d}{dt}\int|x-y|^{2}\,d\Pi_{t}\leq C\max_{1\leq i \leq k}( \Vert f_{t}^{(i)}\Vert _{L^{p}},\Vert g_{t}^{(i)}\Vert _{L^{p}},1) \Psi\left(\int|x-y|^{2}\,d\Pi_{t}\right),
  \end{equation*}
  which implies uniqueness due to Osgood's criterion. Moreover, we have the explicit bound
  \begin{align*}
    d_2^2(f(t),g(t))  \leq \int |x-y|^2\,d\Pi_t	
    & \leq H\left(\int |x-y|^2\,d\Pi_0,
    C \int_{0}^{t}\max_{i}(\Vert f^{(i)}(t)\Vert_{L^p_{x^i}},\Vert g^{(i)}(t)\Vert_{L^p_{y^i}},1) \,ds
  \right),\\
  & =H\left(d_2^2(f(0),g(0)),
    C \int_{0}^{t}\max_{i}(\Vert f^{(i)}(t)\Vert_{L^p_{x^i}},\Vert g^{(i)}(t)\Vert_{L^p_{y^i}},1) \,ds
  \right),
  \end{align*}
  where $H(x,y)\geq 0$ is the continuous function with $H(0,\cdot)\equiv 0$ defined in \cref{eq:Omega}.
\end{proof}

\subsection{Estimates involving the coefficients}\label{subsec:coeff_estimates}
In this section we prove \cref{lem:A_b_estimates} using the following elementary inequalities, which are a small generalization of Lemma 4 of \cite{fournier2010uniqueness_Coulomb}.
\begin{lemma}\label{lem:sing_ints}
  Let $0\leq \beta <\alpha \leq d$, $p=\frac{d}{d-\alpha}\in [1,\infty]$ and $h\in L^{p}(\mathbb{R}^{d})$. Then, given $0<\varepsilon<1$,
  \begin{align}
    \sup_{x}\int_{\mathbb{R}^{d}}|x-x_{*}|^{-\beta}h(x_{*})\,dx_{*}&
    \leq C_{d,\alpha,\beta}\left(1+\Vert h\Vert _{L^{p}}\right),\label{eq:sing_int}\\
    \sup_{x,y}\int_{|y-x_{*}|\leq\varepsilon}|x-x_{*}|^{-\beta}h(x_{*})\,dx_{*}&
    \le C_{d,\alpha,\beta}\varepsilon^{\alpha-\beta}\Vert h\Vert _{L^{p}},\label{eq:sing_int_eps}\\
    \sup_{x}\int_{|x-x_{*}|\geq\varepsilon}|x-x_{*}|^{-\alpha}h(x_{*})\,dx_{*}&
    \leq C_{d}\left(1-\log\varepsilon\right)\Vert h\Vert _{L^{p}}.\label{eq:sing_int_log}
  \end{align}
\end{lemma}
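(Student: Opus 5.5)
All three estimates follow from Hölder's inequality with conjugate exponents $p=\frac{d}{d-\alpha}$ and $p'=\frac{d}{\alpha}$ (with $p'=1$ when $\alpha=d$, $p=\infty$). The integrand is split into a near region, where the kernel is singular and $h\in L^p$ is used, and a far region, where the kernel is bounded and we use $h\geq 0$. Throughout we take $h\geq0$ and $\int h\leq 1$ (it will be a marginal of a probability density); otherwise replace $1$ by $\Vert h\Vert_{L^1}$ in \eqref{eq:sing_int}. The key computation is that for $\beta<\alpha$,
\begin{equation*}
  \big\Vert |\cdot|^{-\beta}\mathds 1_{|\cdot|\leq r}\big\Vert_{L^{p'}}^{p'} = C_d\int_0^r s^{-\beta d/\alpha}s^{d-1}\,ds = C_{d,\alpha,\beta}\, r^{d(\alpha-\beta)/\alpha},
\end{equation*}
which is finite precisely because $\beta d/\alpha<d$. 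This gives $\Vert |\cdot|^{-\beta}\mathds 1_{|\cdot|\leq r}\Vert_{L^{p'}}\leq C r^{\alpha-\beta}$.

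For \eqref{eq:sing_int}, split the integral according to $|x-x_*|\leq1$ and $|x-x_*|>1$. On the near region, Hölder and the computation above with $r=1$ give the bound $C\Vert h\Vert_{L^p}$. On the far region the kernel is at most $1$, so the contribution is at most $\int h\leq 1$.

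For \eqref{eq:sing_int_eps}, we need the bound uniformly in $x$ while the integration ball is centered at $y$. Since $|x-\cdot|^{-\beta}$ is symmetric decreasing, its $L^{p'}$ norm over a ball of radius $\varepsilon$ is maximized when the ball is centered at $x$; this is the bathtub (rearrangement) principle. Therefore
\begin{equation*}
  \int_{|y-x_*|\leq\varepsilon}|x-x_*|^{-\beta}h\,dx_* \leq \Vert h\Vert_{L^p}\,\big\Vert|\cdot|^{-\beta}\mathds 1_{|\cdot|\leq\varepsilon}\big\Vert_{L^{p'}} \leq C\varepsilon^{\alpha-\beta}\Vert h\Vert_{L^p}.
\end{equation*}
For $\beta=0$ this reduces to $|B_\varepsilon|^{1/p'}=C\varepsilon^{\alpha}$ directly.

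For \eqref{eq:sing_int_log} the exponent is critical, and this is the only delicate point. Hölder gives $\Vert h\Vert_{L^p}$ times
\begin{equation*}
  \big\Vert|\cdot|^{-\alpha}\mathds 1_{|\cdot|\geq\varepsilon}\big\Vert_{L^{p'}} = \Big(C_d\int_\varepsilon^\infty s^{-d-1}\,ds\Big)^{\alpha/d} = C\varepsilon^{-\alpha},
\end{equation*}
which is too large. Instead, split the region into $\varepsilon\leq|x-x_*|\leq1$ and $|x-x_*|>1$. The far part is at most $\int h\leq 1$. This is harmless when $\Vert h\Vert_{L^p}\gtrsim 1$; otherwise it should be absorbed into a constant term, so in applications the bound is used in the form $C(1-\log\varepsilon)(1+\Vert h\Vert_{L^p})$.

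For the annulus, decompose dyadically into shells $2^{-j-1}<|x-x_*|\leq 2^{-j}$ for $0\leq j\lesssim\log_2(1/\varepsilon)$. On each shell the kernel is at most $2^{(j+1)\alpha}$ and, by Hölder, $\int_{\text{shell}}h\leq \Vert h\Vert_{L^p}\,C2^{-j\alpha}$ since $|B_{2^{-j}}|^{1/p'}=C2^{-jd\alpha/d}=C2^{-j\alpha}$. Each shell therefore contributes $C\Vert h\Vert_{L^p}$, and summing over the $O(1-\log\varepsilon)$ shells gives the logarithm. In the case $\alpha=d$, $p=\infty$, this is simply $\Vert h\Vert_\infty\int_{\varepsilon\leq|z|\leq1}|z|^{-d}\,dz=C\Vert h\Vert_\infty(-\log\varepsilon)$.

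The main obstacle is this last, critical-exponent estimate: the sharp Hölder bound must be localized shell by shell to get the logarithm rather than $\varepsilon^{-\alpha}$, and the far-field contribution must be tracked via the mass of $h$.
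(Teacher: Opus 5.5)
Your proof is correct. For \eqref{eq:sing_int} and \eqref{eq:sing_int_eps} it is the paper's argument: split at $|x-x_*|=1$ and apply Hölder with $p=\frac{d}{d-\alpha}$, $p'=\frac{d}{\alpha}$. You also justify, via the bathtub principle, replacing the ball centered at $y$ by the ball centered at $x$, a step the paper takes silently.

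You depart from the paper only in \eqref{eq:sing_int_log}, and that departure rests on a miscalculation. Since $\alpha p'=d$, the radial integrand in $\Vert |\cdot|^{-\alpha}\mathds{1}_{|\cdot|\geq\varepsilon}\Vert_{L^{p'}}^{p'}$ is $s^{-d}s^{d-1}=s^{-1}$, not $s^{-d-1}$. Over the whole exterior this norm is infinite, because of logarithmic divergence at infinity; it is not $C\varepsilon^{-\alpha}$. Over the annulus $\varepsilon\leq|z|\leq 1$, however, it equals $C_d\log(1/\varepsilon)$. So plain Hölder on the annulus, which is exactly what the paper does, already gives $\Vert h\Vert_{L^p}\big(C_d\log(1/\varepsilon)\big)^{\alpha/d}\leq C(1-\log\varepsilon)\Vert h\Vert_{L^p}$. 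Your dyadic-shell argument is valid, but it yields the full power of the logarithm, whereas direct Hölder gives the power $\alpha/d\leq 1$. The critical exponent is therefore not really the obstacle you describe.

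Your remark about the far region is correct and agrees with the paper's proof, which also picks up an additive $+1$ from $|x-x_*|\geq 1$. As literally stated, \eqref{eq:sing_int_log} fails. For example, take $\alpha=d$ and $h=|B_R|^{-1}\mathds{1}_{B(x,R)}$ with $R\to\infty$: the left side is $|S^{d-1}|\log(R/\varepsilon)/|B_R|$, while $\Vert h\Vert_{L^\infty}=|B_R|^{-1}$. What is actually proved, and what is used later with $\max(\Vert f_t^{(i)}\Vert_{L^p},1)$, is the version you state: $C(1-\log\varepsilon)(1+\Vert h\Vert_{L^p})$.
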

\begin{proof}
  For \eqref{eq:sing_int} we separate the integral into the regions where $|x-x_{*}|\leq1$, $|x-x_{*}|\geq1$ and use Hölder inequality with exponents $p=\frac{d}{d-\alpha}$, $\frac{d}{\alpha}$
  \begin{equation*}
    \int_{\mathbb{R}^{d}}|x-x_{*}|^{-\beta}h(x_{*})\,dx_{*}
    \leq\int_{|x-x_{*}|\le1}|x-x_{*}|^{-\beta}h(x_{*})\,dx_{*}+1
    \leq C_{d,\alpha,\beta}\left\Vert h\right\Vert _{L^{p}}+1,
  \end{equation*}
  where $C_{d,\alpha,\beta}\nearrow+\infty$ as $\beta \nearrow\alpha$. We also obtain \eqref{eq:sing_int_eps} from Hölder's inequality:
  \begin{align*}
    \int_{|y-x_*|\leq \varepsilon}|x-x_{*}|^{-\beta}h(x_{*})\,dx &
    \leq\left\Vert h\right\Vert _{L^{p}}\left(\int_{|z|\leq\varepsilon}|z|^{-\frac{\beta}{\alpha}d}\,dz\right)^{\frac{\alpha}{d}}	\\
    & \leq C_{d}\left\Vert h\right\Vert _{L^{p}}\left(\int_{0}^{\varepsilon}r^{\left(1-\frac{\beta}{\alpha}\right)d-1}\,dz\right)^{\frac{\alpha}{d}}
    =C_{d,\alpha,\beta}\left\Vert h\right\Vert _{L^{p}}\varepsilon^{\alpha-\beta}.
  \end{align*}
  Finally, for \eqref{eq:sing_int_log} we separate into regions where $|x-x_*|\geq 1$, $\varepsilon \leq |x-x_*|\leq 1$
  \begin{equation*}
    \int_{|x-x_{*}|\geq\varepsilon}|x-x_{*}|^{-\alpha}h(x_{*})\,dx\leq\int_{\varepsilon\leq|x-x_{*}|\leq1}|x-x_{*}|^{-\alpha}h(x_{*})\,dx+1,
  \end{equation*}
  and use Hölder inequality with the same exponents to bound the singular integral.
\end{proof}
We use the previous estimates on the marginals $f_t^{(i)}$, $g_t^{(i)}$ together with assumptions \ref{ass:growth}, \ref{ass:local_Lipchitz}.

\begin{proof}[Proof of estimate \cref{eq:A_estimate} from \cref{lem:A_b_estimates}]
  We must show that for $2\leq \alpha \leq d$ we have
  \begin{equation*}
    \int\left|\sigma(x-z)-\sigma(y-z)\right|^{2}f_t(z)\,dz 
    \leq C\max_{1\leq i \leq k}(\Vert f_{t}^{(i)}\Vert _{L^{p}},1)\Psi(|x-y|^{2}).
  \end{equation*}
  We split the integral into four regions depending on ${0<\varepsilon<1}$, a parameter that is chosen later. To simplify the expressions, we omit the inocuous $+1$ term from assumptions \ref{ass:growth} and \ref{ass:local_Lipchitz}.

  \textit{Region 1: $R_{1}:=\left\{ \min_{i}|z^{i}-x^{i}|\leq\varepsilon\right\}$.} By symmetry, we can reduce it to $R_{1}=\left\{ |z^{1}-x^{1}|\leq\varepsilon\right\}$, where we apply assumption assumption \ref{ass:growth}
  \begin{equation*}
    \int_{R_{1}}|\sigma(x-z)-\sigma(y-z)|^{2}f_{t}\,dz\leq C\sum_{i=1}^{k}\int\mathds{1}_{|x^{1}-z^{1}|\leq\varepsilon}\left(|x^{i}-z^{i}|^{-\alpha+2}+|y^{i}-z^{i}|^{-\alpha+2}\right)f_{t}\,dz.
  \end{equation*}
  Now use Hölder inequality twice with exponents $\left(\frac{d}{d-1},d\right)$, $(p=\frac{d}{d-\alpha},\frac{d}{\alpha})$ and apply \cref{eq:sing_int}:
  \begin{align*}
    \int\mathds{1}_{|x^{1}-z^{1}|\leq\varepsilon}|x^{i}-z^{i}|^{-\alpha+2}f_{t}\,dz &
    \leq\left(\int\mathds{1}_{|x^{1}-z^{1}|\leq\varepsilon}f_{t}^{(1)}\,dz^{1}\right)^{\frac{1}{d}}
    \left(\int|x^{i}-z^{i}|^{-\frac{d(\alpha-2)}{d-1}}f_{t}^{(i)}\, dz^i\right)^{1-\frac{1}{d}}	\\
    &\leq C\varepsilon^{\frac{\alpha}{d}}\max_{1\leq i\leq k}(\Vert f_{t}^{(i)}\Vert_{L^{p}},1).
  \end{align*}
  The same applies to the term involving $|y^i-z^i|^{-\alpha+2}$. Remembering that $\alpha \geq 2$, we obtain
  \begin{equation*}
    \int_{R_{1}}|\sigma(x-z)-\sigma(y-z)|^{2}f_{t}\,dz\leq C\varepsilon^{\frac{2}{d}}\max_{1\leq i\leq k}(\Vert f_{t}^{(i)}\Vert_{L^{p}},1).
  \end{equation*}
  \textit{Region 2: $R_{2}:=\left\{ \min_{i}|z^{i}-y^{i}|\leq\varepsilon\right\}$.} Is symmetric to the previous case. 

  \textit{Region 3: $R_{3}=\left\{ \min_{i}|z^{i}-x^{i}|\geq\varepsilon,\min_{i}|z^{i}-y^{i}|\geq\varepsilon\right\}$.} Here assumption \ref{ass:local_Lipchitz} and (\ref{eq:sing_int_log}) show
  \begin{align*}
    & \int_{R_{3}}|\sigma(x-z)-\sigma(y-z)|^{2}f_{t}\,dz \\
    \leq&\;  C|x-y|^{2} \sum_{i=1}^{k}\bigg[\int_{|z^{i}-x^{i}|\geq\varepsilon}|z^{i}-x^{i}|^{-\alpha}f_{t}^{(i)}\,dz^{i}
    +\int_{|z^{i}-y^{i}|\geq\varepsilon}|z^{i}-y^{i}|^{-\alpha}f_{t}^{(i)}\,dz^{i}\bigg] \\
    \leq &\;  C\left(1-\log\varepsilon\right)\max_{1\leq i \leq k}(\Vert f_{t}^{(i)}\Vert _{L^{p}},1)|x-y|^{2}.
  \end{align*}
  Putting all the regions together, we have
  \begin{equation*}
    \int\left|\sigma(x-z)-\sigma(y-z)\right|^{2}f_{t}(z)\,dz\leq C \max_{1\leq i \leq k}(\Vert f_{t}^{(i)}\Vert _{L^{p}},1)\left[\left(1-\log\varepsilon\right)|x-y|^{2}+\varepsilon^{\frac{2}{d}}\right].
  \end{equation*}
  Now we simply pick $\varepsilon^{\frac{2}{d}}=\min( e^{-1},|x-y|^2)$.
\end{proof}

The proof of estimate \cref{eq:b_estimate} from \cref{lem:A_b_estimates} follows the same argument, so we omit it. The last estimate is a bit more involved, so we present its proof separately.

\begin{proof}[Proof of estimate \cref{eq:b_estimate2} from \cref{lem:A_b_estimates}]
  We must show that for $\alpha \geq 2$ we have
  \begin{equation*}
    \int\left|b(x-x_{*})-b(y-y_{*})\right||x-y|\,d\Pi_{t}^{*}\,d\Pi_{t}
    \leq C\max_{1\leq i \leq k}( \Vert f_{t}^{(i)}\Vert _{L^{p}},\Vert g_{t}^{(i)}\Vert _{L^{p}},1)
    \Psi\left(\int|x-y|^{2}\,d\Pi_{t}\right).
  \end{equation*}
  To simplify notation, we define
  \begin{equation*}
    I(x,y,x_{*},y_{*}):=|x-y|\left|b(x-x_{*})-b(y-y_{*})\right|.
  \end{equation*}
  As before, we split the integral into regions depending on $0<\varepsilon<1$, which is suitably chosen later and omit the $+1$ term in the assumptions for simplicity.
  
  \textit{Region 1: $R_{1}:=\left\{ \min_{i}|x^{i}-x_{*}^{i}|\leq\varepsilon\right\}$.} By symmetry, we can reduce it to $R_{1}=\left\{ |x^{1}-x_{*}^{1}|\leq\varepsilon\right\}$, where assumption \ref{ass:growth} shows
  \begin{align}
    \int_{R_{1}}I\,d\Pi_{t}\,d\Pi_{t}^{*}
    & +C\sum_{i=1}^{k}\int|x-y|\left(\int\mathds{1}_{|x^{1}-x_{*}^{1}|<\varepsilon}|x^{i}-x_{*}^{i}|^{-\alpha+1}f_{t}\,dx_{*}\right)\,d\Pi_{t}\nonumber\\
    & +C\sum_{i=1}^{k}\int|x-y|\left(\int\mathds{1}_{|x^{1}-x_{*}^{1}|<\varepsilon}|y^{i}-y_{*}^{i}|^{-\alpha+1}\,d\Pi_{t}^{*}\right)\,d\Pi_{t}.\label{eq:term3_reg1}
  \end{align}
  Apply Hölder's inequality twice with exponents $\left(\frac{2d}{2d-1},2d\right)$, $\left(p=\frac{d}{d-\alpha},\frac{d}{\alpha}\right)$ and use \cref{eq:sing_int}:
  \begin{align*}
    \int\mathds{1}_{|x^{1}-x_{*}^{1}|<\varepsilon}|x^{i}-x_{*}^{i}|^{-\alpha+1}f_{t}\,dx_{*}
    &\leq\left(\int\mathds{1}_{|x^{1}-x_{*}^{1}|<\varepsilon}f_{t}^{(1)}\,dx_{*}^{1}\right)^{\frac{1}{2d}}
    \left(\int|x^{i}-x_{*}^{i}|^{-\frac{2d(\alpha-1)}{2d-1}}f_{t}^{(i)}\,dx_{*}^{i}\right)^{1-\frac{1}{2d}}\\
    &\leq C\varepsilon^{\frac{\alpha}{2d}}\max_{1\leq i\leq k}(\Vert f^{(i)}\Vert_{L^{p}},1).
  \end{align*}
  The third term \cref{eq:term3_reg1} is estimated the same way:
  %
  \begin{align*}
    \int\mathds{1}_{|x^{1}-x_{*}^{1}|<\varepsilon}|y^{i}-y_{*}^{i}|^{-\alpha+1}\,d\Pi_{t}^{*} 
    &\leq\left(\int\mathds{1}_{|x^{1}-x_{*}^{1}|<\varepsilon}f_{t}^{(1)}\,dx_{*}^{1}\right)^{\frac{1}{2d}}
    \left(\int|y^{i}-y_{*}^{i}|^{-\frac{2d(\alpha-1)}{2d-1}}g_{t}^{(i)}\,dy_{*}^{i}\right)^{1-\frac{1}{2d}}	\\
    & \leq C\varepsilon^{\frac{\alpha}{2d}}\max(\Vert f_{t}^{(1)}\Vert _{L^{p}},\Vert g_{t}^{(i)}\Vert _{L^{p}},1).
  \end{align*}
  Keeping in mind $\alpha \geq 2$, this yields
  \begin{equation*}
    \int_{R_{1}}I\,d\Pi_{t}\,d\Pi_{t}^{*}\leq 
    C\varepsilon^{\frac{1}{d}}\max_{1\leq i \leq k}(\Vert f_{t}^{(i)}\Vert _{L^{p}},\Vert g_{t}^{(i)}\Vert _{L^{p}},1)\left(\int|x-y|^2\,d\Pi_t\right)^\frac{1}{2}.
  \end{equation*}
  \textit{Region 2: $R_{2}:=\left\{ \min_{i}|y^{i}-y_{*}^{i}|\leq\varepsilon\right\}$.} Is symmetric to the previous case.

  \textit{Region 3: $R_{3}=\left\{ \min_{i}|x^{i}-x_{*}^{i}|\geq\varepsilon,\min_{i}|y^{i}-y_{*}^{i}|\geq\varepsilon\right\}$.} Here we use assumption \ref{ass:local_Lipchitz}, which gives
  \begin{equation*}
    I\leq C\left(|x-y|^{2}+|x_{*}-y_{*}|^{2}\right)\left(\sum_{i=1}^{k}|x^{i}-x_{*}^{i}|^{-\alpha}+\sum_{i=1}^{k}|y^{i}-y_{*}^{i}|^{-\alpha}\right),
  \end{equation*}
  yielding
  \begin{align*}
    \int_{R_{3}}I\,d\Pi_{t}\,d\Pi_{t}^{*}&
    \leq C\int|x-y|^{2}\left(\sum_{i=1}^{k}\int_{|x^{i}-x_{*}^{i}|\geq\varepsilon}|x^{i}-x_{*}^{i}|^{-\alpha}f_{t}^{(i)}\,dx_{*}^{i}\right)\,d\Pi_{t}\\
    &+C\int|x-y|^{2}\left(\sum_{i=1}^{k}\int_{|y^{i}-y_{*}^{i}|\geq\varepsilon}|y^{i}-y_{*}^{i}|^{-\alpha}g_{t}^{(i)}\,dy_{*}^{i}\right)\,d\Pi_{t}\\
    &+C\int|x_{*}-y_{*}|^{2}\left(\sum_{i=1}^{k}\int_{|x^{i}-x_{*}^{i}|\geq\varepsilon}|x^{i}-x_{*}^{i}|^{-\alpha}f_{t}^{(i)}\,dx^{i}\right)\,d\Pi_{t}^{*}\\
    &+C\int|x_{*}-y_{*}|^{2}\left(\sum_{i=1}^{k}\int_{|y^{i}-y_{*}^{i}|\geq\varepsilon}|y^{i}-y_{*}^{i}|^{-\alpha}g_{t}^{(i)}\,dy^{i}\right)\,d\Pi_{t}^{*}.
  \end{align*}
  Now \cref{eq:sing_int_log} shows
  \begin{equation*}
    \int_{R_{3}}I\,d\Pi_{t}\,d\Pi_{t}^{*}\leq 
    C(1-\log\varepsilon)\max_{1\leq i \leq k}(\Vert f_{t}^{(i)}\Vert _{L^{p}},\Vert g_{t}^{(i)}\Vert _{L^{p}},1)\int|x-y|^{2}\,d\Pi_{t}.
  \end{equation*}
  All in all, we have
  \begin{equation*}
    \int I\,d\Pi_{t}\,d\Pi_{t}^{*}\leq 
    C\max_{1\leq i \leq k}(\Vert f_{t}^{(i)}\Vert _{L^{p}},\Vert g_{t}^{(i)}\Vert _{L^{p}},1)\left((1-\log\varepsilon)\int|x-y|^{2}\,d\Pi_{t}+\varepsilon^{\frac{1}{d}}\left(\int|x-y|^2\,d\Pi_t\right)^\frac{1}{2}\right).
  \end{equation*}
  Now we may choose $\displaystyle\varepsilon^\frac{1}{d} = \min \left(e^{-1},\int|x-y|^{2}\,d\Pi_{t}\right)^{\frac{1}{2}}$.
\end{proof}

\subsection{Existence of the curve of transport plans}\label{subsec:existence_plans}
In this section we prove \cref{lem:existence_plan} in the case $2\leq \alpha \leq d$. That is, the existence of a curve of transport plans $t\mapsto \Pi_t\in \Gamma(f_t,g_t)$ such that $\Pi_0 = \Pi_0^*$ and
\begin{equation}\label{eq:plan}
  \frac{d}{dt}\int\varphi\,d\Pi_{t} = \int
  \left[\Big(\overline{c}+(\overline{b}*\Pi_{t}^{*})\Big)\cdot\nabla_{x,y}\varphi 
  + \Big(\overline{M}+(\overline{A}*\Pi_{t}^{*})\Big):D_{x,y}^{2}\varphi\right]\,d\Pi_{t},
\end{equation}
where $\Pi^*_t\in \Gamma(f_t,g_t)$ is the optimal transport plan. Several elements of this proof already appeared in \cite{fournier2009well_posedness_soft_potentials,fournier2010uniqueness_Coulomb} but we include all the details for the sake of completeness. We use that \cref{eq:plan} has a probabilistic counterpart given by the pair of stochastic differential equations
\begin{align}
  X_{t}&\nonumber
  =X_{0}+\int_{0}^{t}c(X_{s})\,ds+\sqrt 2\int_{0}^{t}\mathfrak m(X_{s})\,dB_s\\
  &+\int_{0}^{t}\int_{\mathbb{R}^{n}}b(X_{s}-x_{*})f_{s}(x_{*})\,dx_{*}\,ds
  +\sqrt 2\int_{0}^{t}\int_{\mathbb{R}^{n}\times\mathbb{R}^{n}}\sigma(X_{s}-x_{*})\,W(dx_{*},dy_{*},ds),
  \label{eq:SDEX}\\
  Y_{t} &\nonumber
  =Y_{0}+\int_{0}^{t}c(Y_{s})\,ds
  +\sqrt 2\int_{0}^{t}\mathfrak m(Y_{s})\,dB_s\\
  &+\int_{0}^{t}\int_{\mathbb{R}^{n}}b(Y_{s}-y_{*})g_{s}(y_{*})\,dy_{*}\,ds
  +\sqrt 2\int_{0}^{t}\int_{\mathbb{R}^{n}\times\mathbb{R}^{n}}\sigma(Y_{s}-y_{*})\,W(dx_{*},dy_{*},ds),
  \label{eq:SDEY}
\end{align}
where $B$ is an $n$-dimensional Brownian motion, the initial data are chosen with
\begin{equation*}
  \textnormal{Law}(X_{0})=f_{0},\quad\textnormal{Law}(Y_{0})=g_{0},\quad\textnormal{Law}(X_{0},Y_{0})=\Pi_{0}^{*},
\end{equation*}
and $W(dx,dy,dt)= (W_1,W_2,W_3)(dx,dy,dt)$ is a 3D white noise on $\mathbb{R}^{n}\times\mathbb{R}^{n}\times[0,T]$ independent from $B$ with covariance measure $d\Pi_{t}^{*}(x,y)\,dt$. See \cite{1986_Walsh_intro_SDEs} for details and definitions on white noise. Most importantly, this means that for any test functions $\varphi(x,y,t)$, $\psi(x,y,t)$, the processes
\begin{equation*}
  (\xi_{t}^{i})_{i=1}^{d}:=\int_{0}^{t}\int_{\mathbb{R}^{n}\times\mathbb{R}^{n}}\varphi(x,y,s)\,W(dx,dy,ds),\quad
  (\zeta_{t}^{i})_{i=1}^{d}:=\int_{0}^{t}\int_{\mathbb{R}^{n}\times\mathbb{R}^{n}}\psi(x,y,s)\,W(dx,dy,ds)
\end{equation*}
are square integrable martingales with cross variation structure 
\begin{equation*}
  \left\langle \xi^{i},\zeta^{j}\right\rangle _{t}=\delta_{ij}\int_{0}^{t}\int_{\mathbb{R}^{n}\times\mathbb{R}^{n}}\varphi(x,y,s)\psi(x,y,s)\,d\Pi_{s}^{*}(x,y)\,ds,
\end{equation*}
which is all the information required to perform the subsequent computations. Now a simple application of Itô's formula for a test function $\varphi\in \mathcal C^2_b(\mathbb R^d\times \mathbb R^d)$ shows that if $X_t$, $Y_t$ are solutions of \cref{eq:SDEX,eq:SDEY} then $\Pi_t=\textnormal{Law}(X_t,Y_t)$ is a weak solution of \cref{eq:plan}. Therefore, since both equations are analogous, the problem reduces to showing that \cref{eq:SDEX} has a solution and that $\textnormal{Law}(X_t)=f_t$. We divide the proof into various steps. One of the main goals is showing existence of strong solutions through Picard iterations on the mapping
\begin{align*}
  \Phi&:L^{\infty}\left([0,T];L^{2}(\Omega)\right)\longrightarrow L^{\infty}\left([0,T];L^{2}(\Omega)\right),\\
  \Phi(X)_{t}&
  :=X_{0}+\int_{0}^{t}c(X_{s})\,ds
  +\int_{0}^{t}\int_{\mathbb{R}^{d}}b(X_{s}-x_{*})f_{s}(x_{*})\,dx_{*}\,ds\\
  &+\sqrt 2\int_{0}^{t}\mathfrak m(X_{s})\,dB_s+\sqrt 2\int_{0}^{t}\int_{\mathbb{R}^{d}\times\mathbb{R}^{d}}\sigma(X_{s}-x_{*})\,W(dx_{*},dy_{*},ds),
\end{align*}
where $\Omega$ denotes the probability space.

\textbf{Step 1. Mapping $\Phi$ is well defined.} We verify that $\sup_{[0,T]}\mathbb{E}\left[\left|\Phi(X)_{t}\right|^{2}\right]<\infty$. Using the global Lipschitz assumption \ref{ass:Lipschitz} on $c$, $\mathfrak m$ shows
\begin{align*}
  \mathbb{E}\left[\left|\Phi(X)_{t}\right|^{2}\right] &
  \leq2\,\mathbb{E}\left[|X_{0}|^{2}\right] + (t^{2}+t)C\left(1+\sup_{[0,t]}\mathbb{E}\left[|X_{s}|^{2}\right]\right) 	\\
  & +C\,\mathbb{E}\left[\left(\int_{0}^{t}\int\left|b(X_{s}-x_{*})\right|f_{s}(x_{*})\,dx_{*}\,ds\right)^{2}\right]
  +C\,\mathbb{E}\left[\int_{0}^{t}\int|\sigma(X_{s}-x_{*})|^{2}f_{s}(x_{*})\,dx_{*}\,ds\right].
\end{align*}
We bound the remaining two terms using assumption \ref{ass:growth} and \cref{eq:sing_int}, which show
\begin{equation*}
  \int\left|b(X_{s}-x_{*})\right|f_{s}(x_{*}),\int|\sigma(X_{s}-x_{*})|^{2}f_{s}(x_{*})\leq C\max_{1\leq i\leq k}(\Vert f_{s}^{(i)}\Vert_{L^p},1),
\end{equation*}
which implies
\begin{align*}
  \mathbb{E}\left[\left|\Phi(X)_{t}\right|^{2}\right]&
  \lesssim\mathbb{E}\left[|X_{0}|^{2}\right]+
  C(t^{2}+t)\left(1+\sup_{[0,t]}\mathbb{E}\left[|X_{s}|^{2}\right]\right)\\&+C\int_{0}^{T}\max_{1\leq i\leq k}(\Vert f_{s}^{(i)}\Vert_{L^p},1)\,ds
  +C\left(\int_{0}^{T}\max_{1\leq i\leq k}(\Vert f_{s}^{(i)}\Vert_{L^p},1)\,ds\right)^{2},
\end{align*}
showing that $\Phi$ is well defined.

\textbf{Step 2. Continuity estimate on $\Phi$.} Given $X,\tilde{X}\in L^{\infty}\left([0,T];L^{2}(\Omega)\right)$ we estimate
\begin{align*}
  \Phi(X)_{t}-\Phi(\tilde{X})_{t} &
  =\int_{0}^{t}\left(c(X_{s})-c(\tilde{X}_{s})\right)\,ds+
  \sqrt{2}\int_{0}^{t}\left(\mathfrak m(X_{s})-\mathfrak m(\tilde{X}_{s})\right)\,dB_s	\\
  & +\int_{0}^{t}\int\left(b(X_{s}-x_{*})-b(\tilde{X}_{s}-x_{*})\right)f_{s}(x_{*})\,dx_{*}\,ds\\
  & +\sqrt{2}\int_{0}^{t}\int\left(\sigma(X_{s}-x_{*})-\sigma(X_{s}-x_{*})\right)W(dx_{*},dy_{*},ds).
\end{align*}
Using Itô's formula gives
\begin{align*}
  \frac{1}{2}\frac{d}{dt}\mathbb{E}\left[|\Phi(X)_{t}-\Phi(\tilde{X})_{t}|^{2}\right] &
  =\mathbb{E}\left[\left(c(X_{t})-c(\tilde{X}_{t})\right)\left(\Phi(X)_{t}-\Phi(\tilde{X}_{t})\right)\right]
  +\mathbb{E}\left[|\mathfrak m(X_{t})-\mathfrak m(\tilde{X}_{t})|^{2}\right]	\\
  & +\mathbb{E}\left[\left(\Phi(X)_{t}-\Phi(\tilde{X})_{t}\right)\cdot\int\left(b(X_{t}-x_{*})-b(\tilde{X}_{t}-x_{*})\right)f_{t}(x_{*})\,dx_{*}\right]\\
  & +\mathbb{E}\left[\int|\sigma(X_{t}-x_{*})-\sigma(\tilde{X}_{t}-x_{*})|^{2}f(x_{*})\,dx_{*}\right].
\end{align*}
\cref{lem:A_b_estimates} bounds the last two terms:
\begin{align*}
  \int|b(X_{t}-x_{*})-b(\tilde{X}_{t}-x_{*})|f_{t}(x_{*})\,dx_{*}
  &\leq C\max_{1\leq i\leq k}(\Vert f_{s}^{(i)}\Vert_{L^p},1)\Psi\left(|X_{t}-\tilde{X}_{t}|\right),\\
  \int|\sigma(X_{t}-x_{*})-\sigma(\tilde{X}_{t}-x_{*})|^{2}f(x_{*})\, dx_*
  &\leq C\max_{1\leq i\leq k}(\Vert f_{s}^{(i)}\Vert_{L^p},1)\Psi\left(|X_{t}-\tilde{X}_{t}|^{2}\right).
\end{align*}
Also note that, since  $\Psi$  is increasing and satisfies $a\Psi(a)\leq\Psi(a^{2})$ we have for any $a,b\geq0$
\begin{equation*}
  a\Psi(b)\leq\max\left\{ a\Psi(a),b\Psi(b)\right\} \leq\Psi(a^{2})+\Psi(b^{2}).
\end{equation*}
Putting all of this together yields
\begin{align}\label{eq:diff_ineq_phi}
  & \frac{d}{dt}\mathbb{E}\left[|\Phi(X)_{t}-\Phi(\tilde{X})_{t}|^{2}\right]	\\
  \leq & \; C\max_{1\leq i\leq k}(\Vert f_{t}^{(i)}\Vert_{L^p},1)
  \left[\Psi\left(\mathbb{E}\left[|\Phi(X)_{t}-\Phi(\tilde{X}_{t})|^{2}\right]\right)
  +\Psi\left(\mathbb{E}\left[|X_{t}-\tilde{X}_{t}|^{2}\right]\right)\right].\nonumber
\end{align}
\textbf{Step 3. Continuity of $\Phi$.} The differential inequality \cref{eq:diff_ineq_phi} suggests that we should have continuity of $\Phi:L^{\infty}\left([0,T];L^{2}(\Omega)\right)\to L^{\infty}\left([0,T];L^{2}(\Omega)\right)$. This turns out to be true, although we do not prove it using \cref{eq:diff_ineq_phi}. Note hat the log-Lipschitz modulus of continuity $\Psi$  shows only due to the non-integrability of $|x^{i}-x_{*}^{i}|^{-d}$ in $\mathbb R^d$, which appears after applying Hölder's inequality with exponents $\frac{d}{d-\alpha}$, $\frac{d}{\alpha}$ on assumption \ref{ass:local_Lipchitz}. Therefore, if we consider a cut-off function
\begin{equation*}
  \Theta\in\mathcal{C}^{\infty}(\mathbb{R}),\quad0\leq\Theta\leq1,\quad\Theta|_{[-1,1]}\equiv0,\quad\Theta|_{[-2,2]^{c}}\equiv1,\quad\Theta_{\varepsilon}(x):=\Theta(\varepsilon^{-1}x),
\end{equation*}
and define a regularized map $\Phi_{\varepsilon}:L^{\infty}\left([0,T];L^{2}(\Omega)\right)\to L^{\infty}\left([0,T];L^{2}(\Omega)\right)$ where $b(x)$, $\sigma(x)$ are replaced by $\Theta_{\varepsilon}(x)b(x)$, $\Theta_{\varepsilon}(x)\sigma(x)$, then we would obtain an estimate of the form
\begin{equation*}
  \frac{d}{dt}\mathbb{E}\left[|\Phi_{\varepsilon}(X)_{t}-\Phi_{\varepsilon}(\tilde{X})_{t}|^{2}\right]\leq
   C_{\varepsilon}\max_{1\leq i\leq k}(\Vert f_{t}^{(i)}\Vert_{L^p},1)\left(\mathbb{E}\left[|\Phi_{\varepsilon}(X)_{t}-\Phi_{\varepsilon}(\tilde{X}_{t})|^{2}\right]+
   \mathbb{E}\left[|X_{t}-\tilde{X}_{t}|^{2}\right]\right),
\end{equation*}
where $C_{\varepsilon}\nearrow\infty$  as $\varepsilon\searrow 0$. Here Gronwall's inequality clearly implies the continuity of $\Phi_{\varepsilon}$ since
\begin{equation*}
  \mathbb{E}\left[|\Phi_{\varepsilon}(X)_{t}-\Phi_{\varepsilon}(\tilde{X})_{t}|^{2}\right]\leq 
  M_\varepsilon e^{M_\varepsilon}\sup_{[0,t]}\mathbb{E}\left[|X_{s}-\tilde{X}_{s}|^{2}\right],
  \quad M_\varepsilon:=C_{\varepsilon}\int_{0}^{T}\max_{1\leq i\leq k}(\Vert f_{s}^{(i)}\Vert_{L^{p}},1)\,ds.
\end{equation*}
Now we show uniform convergence $\Phi_{\varepsilon}\to\Phi$ using Itô's formula, assumption \ref{ass:growth} and \cref{eq:sing_int_eps}
\begin{align*}
  \mathbb{E}\left[\left|\Phi(X)_{t}-\Phi_{\varepsilon}(X)_{t}\right|^{2}\right] &
  \leq2\mathbb{E}\left[\left(\int_{0}^{t}\int_{|X_{s}-x_{*}|<2\varepsilon}\left|b(X_{s}-x_{*})\right|f_{s}(x_{*})\,dx_{*}\,ds\right)^{2}\right]	\\
  & +2\mathbb{E}\left[\left(\int_{0}^{t}\int_{|X_{s}-x_{*}|<2\varepsilon}\left|\sigma(X_{s}-x_{*})\right|^{2}f_{s}(x_{*})\,dx_{*}\,ds\right)\right]\\
  & \leq C\left(1+\left(\int_{0}^{T}\max_{1\leq i\leq k}(\Vert f_{s}^{(i)}\Vert_{L^p},1)\,ds\right)^{2}\right)\varepsilon^{2}.
\end{align*}
This proves the continuity of $\Phi:L^{\infty}\left([0,T];L^{2}(\Omega)\right)\to L^{\infty}\left([0,T];L^{2}(\Omega)\right)$. 

\textbf{Step 4. Existence of solutions through Picard iterations.} First, we restrict the time interval to ensure that iterations are uniformly bounded. In step 1 we proved an inequality of the form%
\begin{equation*}
  \mathbb{E}\left[\left|\Phi(X)_{t}\right|^{2}\right]\leq C_{1}+tC_{2}\sup_{[0,t]}\mathbb{E}\left[|X_{s}|^{2}\right],
\end{equation*}
where both $C_{1}$, $C_{2}$ depend on the constants from the assumptions and $C_{1}$ additionally depends on $\mathbb{E}\left[|X_{0}|^{2}\right]$, $\Vert f^{(i)}\Vert _{L^{1}_tL^p_{x^i}}$. Therefore, if we set
\begin{equation*}
  \mathcal{B}_{R}:=\left\{ X\in L^{\infty}\left([0,\tau];L^{2}(\Omega)\right):\sup_{[0,\tau]}\mathbb{E}\left[|X_{t}|^{2}\right]\leq R\right\} ,\;\tau=\frac{1}{2C_{2}},\quad R=2C_{1}
\end{equation*}
then $\Phi:\mathcal{B}_{R}\longrightarrow\mathcal{B}_{R}$, which ensures that the following Picard iterates are uniformly bounded:
\begin{equation*}
  X_{t}^{(0)}=X_{0},\quad X_{t}^{(m+1)}=\Phi(X^{(m)})_{t},\quad0\leq t\leq\tau.
\end{equation*}
If we now define $\rho_{m,\ell}(t)=\sup_{[0,t]}\mathbb{E}\left[\left|X_{s}^{(m+\ell)}-X_{s}^{(m)}\right|^{2}\right]$, step 2 implies
\begin{equation*}
  \rho_{m+1,\ell}(t)\leq C\int_{0}^{t}\max_{1\leq i\leq k}(\Vert f_{s}^{(i)}\Vert_{L^p},1)\left[\Psi\left(\rho_{m+1,\ell}(s)\right)+\Psi\left(\rho_{m,\ell}(s)\right)\right]\,ds.
\end{equation*}
Now, setting $\rho_{m}(t)=\sup_{\ell}\rho_{m,\ell}(t)$, $\rho(t)=\limsup_{m}\rho_{m}(t)$, we conclude that
\begin{equation*}
  \rho(t)\leq C\int_{0}^{t}\max_{1\leq i\leq k}(\Vert f_{s}^{(i)}\Vert_{L^p},1)\Psi\left(\rho(s)\right)\,ds,
\end{equation*}
which implies that $\rho(t)\equiv 0$ and thus
\begin{equation*}
  \lim_{m}\sup_{\ell}\sup_{[0,\tau]}\mathbb{E}\left[|X_{t}^{(m+\ell)}-X_{t}^{(m)}|^{2}\right]=0,
\end{equation*}
which precisely means that $\left(X^{(m)}\right)_{m}$ is Cauchy in $L^{\infty}\left([0,\tau];L^{2}(\Omega)\right)$, so there exists a process ${X\in L^{\infty}\left([0,\tau];L^{2}(\Omega)\right)}$ such that
\begin{equation*}
  \lim_{m}\sup_{[0,\tau]}\mathbb{E}\left[|X_{t}-X_{t}^{(m)}|^{2}\right]=0.
\end{equation*}
Moreover, since $\Phi$  is continuous with respect to $L^{\infty}\left([0,\tau];L^{2}(\Omega)\right)$, we may let $m\to\infty$ in $X^{(m+1)}=\Phi(X^{(m)})$ to conclude that $X$ is a strong solution of \cref{eq:SDEX} in the interval $[0,\tau]$. Finally, note that the choice of $\tau$  only depends on the constants from our assumptions, so we may choose $X_{\tau}$ as new initial data and iteratively extend the solution to the whole time interval $[0,T]$.

\textbf{Step 5. Uniqueness of solutions to \cref{eq:SDEX}.} Assume we have two solutions $X,\tilde{X}\in L^{\infty}\left([0,T];L^{2}(\Omega)\right)$ with $X_{t}=\Phi(X)_{t}$, $\tilde{X}_{t}=\Phi(\tilde{X})_{t}$. Then \cref{eq:diff_ineq_phi} becomes
\begin{equation*}
  \mathbb{E}\left[|X_{t}-\tilde{X}_{t}|^{2}\right]\leq\int_{0}^{t}\max_{1\leq i\leq k}(\Vert f_{s}^{(i)}\Vert_{L^p},1)\Psi\left(\mathbb{E}\left[|X_{s}-\tilde{X}_{s}|^{2}\right]\right)\,ds,
\end{equation*}
which implies that $\mathbb{E}\left[|X_{t}-\tilde{X}_{t}|^{2}\right]=0$ for all $t\geq0$.

\textbf{Step 6. Showing that $\textnormal{Law}(X_t)=f_t$.} A simple application of Itô's formula shows that $\mu_{t}:=\textnormal{Law}(X_{t})$ is a weak solution of
\begin{equation*}
  \partial_{t}\mu_{t}=\textnormal{div}_{x}\left(c\mu_{t}+(b*f_{t})\mu_{t}\right)+D_{x}^{2}:\left(M\mu_{t}+(A_{t}*f_{t})\mu_{t}\right),
\end{equation*}
which is essentially \cref{eq:gen_eq} but with the coefficients frozen to remove the nonlinearity. By definition, we have that $f_t$, $\mu_t$ solve the same linear equation with the same coefficients and the same initial condition. Then, we would immediately have $\mu_t = f_t$ if we are able to apply \cite[Theorem B.1]{HoroKara1990MartProbBoltz} or \cite[Theorems 4.1 and 5.1]{BhattKara1993InvarMeasures}, which provide a uniqueness criterion for weak solutions of
\begin{equation*}
  \partial_t \mu_t = \mathcal L_t^*\mu_t,
\end{equation*}
where $\mathcal L_t^*$ denotes the formal adjoint of a differential operator. We must verify the following:
\begin{enumerate}
  \item There is a common domain $\mathcal D$ for $\mathcal L_t$ which is a dense sub-algebra of $\mathcal C_0(\mathbb R^n)$ and such that $\mathcal L_t : \mathcal D\longrightarrow L^\infty(\mathbb R^n)$.
  \item For each $t$, $\mathcal L_t$ satisfies the maximum principle. 
  \item There is a countable subset $\mathcal{D}_{0}\subset\mathcal{D}$ such that, for any $f\in\mathcal{D}$ and $t\geq0$, there is $\{f_{m}\}_{m}\subset\mathcal{D}_{0}$ with $\left\{ (f_{m},\mathcal L_{t}f_{m})\right\} _{m}$ uniformly bounded and converging pointwisely to $(f,\mathcal L_{t}f)$.
  \item The martingale problem associated with $(\mathcal L_t,\delta_{x_0})$ is well posed for each $x_0\in \mathbb R^n$. That is, there is a process such that $X_0 = x_0$ and for any $\varphi\in \mathcal D$ we have that
  \begin{equation}\label{eq:mart_prob}
    \varphi\left(X_{t}\right)-\int_{0}^{t}(\mathcal L_{s}\varphi)(X_{s})\,ds
  \end{equation}
  is a margingale. Moreover, solutions are unique in law.
\end{enumerate}
Point (2) is immediate from the form of the operator
\begin{equation*}
  \mathcal{L}_{t}\varphi=\left(c+b*f_{t}\right)\cdot\nabla\varphi+\left(M+A_{t}*f_{t}\right):D^{2}\varphi.
\end{equation*}
We have already shown (4) as well, since solutions to \cref{eq:mart_prob} are represented by solutions of \cref{eq:SDEX}, for which we already have path-wise uniqueness, which implies uniqueness in law. As for (1), from assumption \ref{ass:growth} we have
\begin{equation}\label{eq:bounds_Lt}
  \left|(b*f_{t})(x)\right|,\left|(A_{t}*f_{t})(x)\right|\leq 
  C\max_{1\leq i\leq k}(\Vert f_{t}^{(i)}\Vert_{L^p},1);\qquad
  \left|c(x)\right|,\left|M(x)\right|\leq C\left(1+|x|^2\right),
\end{equation}
so we may pick $\mathcal D = \mathcal C^2_c(\mathbb R^n)$. Finally, for (3), for each $R>0$ we consider a countable subset $\mathcal D_R$ of $\left\{ \varphi\in\mathcal{C}_{c}^{2}(\mathbb{R}^{n}):\textnormal{supp}\,\varphi\subset B_{2R}\right\} $ that is dense in $\left\{ \varphi\in\mathcal{C}_{c}^{2}(\mathbb{R}^{n}):\textnormal{supp}\,\varphi\subset B_{R}\right\}$  and let $\mathcal{D}_{0}=\bigcup_{R\in\mathbb{Q}_{+}}\mathcal{D}_{R}$ Then, given $\varphi\in\mathcal{D}$, we may choose $R>0$ such that $\textnormal{supp}\,\varphi\subset B_{R}$, in which case there exists $(\varphi_{m})_{m}\subset\mathcal{D}_{R}\subset\mathcal{D}_{0}$ such that $\varphi_{m}\to\varphi$  in $\mathcal{C}^{2}(\mathbb{R}^{d})$. Since $\textnormal{supp}\,\varphi_{m}\subset B_{2R}$ and $\sup_{m}\left\Vert \varphi_{m}\right\Vert _{\mathcal{C}^{2}}<\infty$, from \cref{eq:bounds_Lt} we immediately obtain that $\sup_{m}\left\Vert \mathcal{A}_{t}\varphi_{m}\right\Vert _{L^{\infty}}<\infty$  and that $\mathcal{A}_{t}\varphi_{m}\overset{m}{\to}\mathcal{A}_{t}\varphi$  pointwisely. Therefore, we may apply the cited result to conclude $\mu_t = f_t$, finishing the proof of \cref{lem:existence_plan}.

\subsection{The case $0< \alpha < 2$}\label{subsec:soft_pot_modif}
This case must be treated separately due to the sign change of the exponents of assumption \ref{ass:growth}. Nevertheless, the previous argument can still be applied with suitable modifications. Note that the case $\alpha = 2$ is already covered by the earlier analysis, while for $\alpha = 0$ the coefficients $b$ and $\sigma$ are globally Lipschitz, yielding unconditional uniqueness and thus \cref{thm:gen_thm} follows immediately. Finally, when $\alpha = 1$, no modification is needed for the terms involving the drift $b$, since in this case $b$ is Lipschitz. Therefore, it suffices to adapt the estimates involving $\sigma$ in the range $0 < \alpha < 2$, and those involving $b$ in the range $0 < \alpha < 1$. We begin by adapting \cref{lem:A_b_estimates}.

\begin{lemma}\label{lem:sigma_est_mod_soft}
  If $0<\alpha <2$ then for each $0<\varepsilon<1$ we have
  \begin{align*}
    \int\left|\sigma(x-z)-\sigma(y-z)\right|^{2}f_{t}(z)\,dz
    &\leq C(1-\log\varepsilon)\max_{1\leq i\leq k}(\Vert f_{t}^{(i)}\Vert_{L^{p}},1)|x-y|^{2}\\
    &+C\varepsilon^{\frac{\alpha^{2}}{2}}\left(m_{2}(f)+|x|^{2}+|y|^{2}+\max_{1\leq i\leq k}\Vert f_{t}^{(i)}\Vert_{L^{p}}\right),
  \end{align*}
  where we remind that $m_2(f)$ denotes an upper bound of the second moments of $f$.
\end{lemma}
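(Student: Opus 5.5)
We follow the region splitting used for \cref{eq:A_estimate}. The only thing that changes is Region 1 (and its mirror Region 2). When $0<\alpha<2$, assumption \ref{ass:growth} no longer makes $\sigma$ singular near $x^i=z^i$. Instead it gives $|\sigma(w)|^2\lesssim 1+\sum_i|w^i|^{2-\alpha}$, which grows at infinity. So the small-region contribution can no longer be controlled by the $L^p$ norm alone, and we pay with second moments.

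Fix $0<\varepsilon<1$ and split into three regions: $R_1=\{\min_i|z^i-x^i|\le\varepsilon^{\alpha}\}$, its mirror $R_2$ with $y$ in place of $x$, and the complement $R_3$. The radius $\varepsilon^\alpha$, rather than $\varepsilon$, is chosen so the final exponent matches $\varepsilon^{\alpha^2/2}$.

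On $R_3$ we argue exactly as in the proof of \cref{eq:A_estimate}. Assumption \ref{ass:local_Lipchitz} together with \cref{eq:sing_int_log}, applied with cutoff $\varepsilon^\alpha$, give
\begin{equation*}
\int_{R_3}|\sigma(x-z)-\sigma(y-z)|^2 f_t\,dz\le C|x-y|^2\bigl(1-\log\varepsilon^\alpha\bigr)\max_{1\leq i\leq k}(\Vert f_t^{(i)}\Vert_{L^p},1).
\end{equation*}
Since $\alpha\le 2$, the factor $\alpha$ is absorbed into $C$, and this is the first term of the claim.

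On $R_1$ (by symmetry, $|z^1-x^1|\le\varepsilon^\alpha$) we do not use a Lipschitz bound. Instead we use $|\sigma(x-z)-\sigma(y-z)|^2\le 2|\sigma(x-z)|^2+2|\sigma(y-z)|^2$ and \ref{ass:growth}. This bounds the integrand by
\begin{equation*}
C\Bigl(1+\sum_i\bigl(|x^i-z^i|^{2-\alpha}+|y^i-z^i|^{2-\alpha}\bigr)\Bigr),
\end{equation*}
and since $2-\alpha\in(0,2)$ this is at most $C(1+|x|^2+|y|^2+|z|^2)$. We then apply Hölder in $z$, splitting off the indicator as in the proof of \cref{eq:A_estimate}, with exponents $(2,2)$ or a similar pair. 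The factor $(1+|x|^2+|y|^2+|z|^2)$ is controlled by $m_2(f)+|x|^2+|y|^2$; to keep only second moments we apply Hölder to $|z|^{2-\alpha}$ with exponent $2/(2-\alpha)$ rather than squaring. The indicator factor is estimated by Hölder with the $L^p$ bound of the marginal:
\begin{equation*}
\int_{|z^1-x^1|\le\varepsilon^\alpha}f^{(1)}_t\le C\varepsilon^{\alpha d/p'}\Vert f_t^{(1)}\Vert_{L^p}=C\varepsilon^{\alpha\cdot\alpha}\Vert f_t^{(1)}\Vert_{L^p},
\end{equation*}
because $d/p'=\alpha$ when $p=\frac{d}{d-\alpha}$.

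The main obstacle is the bookkeeping of exponents. In this Hölder step the mass of the small region enters only through a power, which is raised to the Hölder exponent $\alpha/2$, and we must check that the result is at least $\varepsilon^{\alpha^2/2}$. Concretely, we would pair the indicator with exponent $q=\frac{2}{\alpha}$ against $|z|^{2-\alpha}$ with conjugate exponent $\frac{2}{2-\alpha}$. This gives
\begin{equation*}
\bigl(\varepsilon^{\alpha^2}\Vert f^{(1)}_t\Vert_{L^p}\bigr)^{\alpha/2}\,m_2(f)^{(2-\alpha)/2},
\end{equation*}
and the same bound with $|x|^2,|y|^2$ in place of $m_2(f)$. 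Young's inequality then converts these products into the sum $m_2(f)+|x|^2+|y|^2+\max_{1\leq i\leq k}\Vert f_t^{(i)}\Vert_{L^p}$, at the price of the factor $\varepsilon^{\alpha^3/2}$. If this loses too much, the fallback is to use radius $\varepsilon$ and absorb the discrepancy into the (unspecified) power in later uses; but the form stated, $\varepsilon^{\alpha^2/2}$, matches the cleaner choice of radius $\varepsilon$ with exponent pair $\frac2\alpha,\frac{2}{2-\alpha}$, since $(\varepsilon^{\alpha})^{\alpha/2}=\varepsilon^{\alpha^2/2}$. We would adopt that choice, with $R_3$ cut at $\varepsilon$, so that the logarithm becomes $1-\log\varepsilon$ directly. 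Region $R_2$ is identical with the roles of $x$ and $y$ exchanged, and summing the three regions gives the lemma.
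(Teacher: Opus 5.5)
Your final version is correct and is exactly the paper's argument. You cut $R_1,R_2$ at radius $\varepsilon$ and apply H\"older with the indicator in $L^{2/\alpha}$ against $|x^i-z^i|^{2-\alpha}$ in $L^{2/(2-\alpha)}$, so only second moments appear. You then bound the mass of the $\varepsilon$-ball by $C\varepsilon^{\alpha}\Vert f_t^{(1)}\Vert_{L^p}$, close with Young, and handle $R_3$ via \ref{ass:local_Lipchitz} and \cref{eq:sing_int_log}; the paper's displayed exponent $\frac{2}{1-\alpha}$ is a typo for your $\frac{2}{2-\alpha}$.

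You should discard the initial radius $\varepsilon^{\alpha}$ and the $(2,2)$ pairing outright rather than leaving them as alternatives. The radius $\varepsilon^{\alpha}$ only gives $\varepsilon^{\alpha^3/2}$, which is weaker than the claim when $\alpha<1$. The $(2,2)$ pairing would require fourth moments.
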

\begin{proof}
  We follow the same strategy dividing the integral into regions.

  \textit{Region 1:} $R_{1}=\left\{ \min_{i}|x^{i}-z^{i}|\leq\varepsilon\right\}$ can be reduced to $R_{1}=\left\{ |x^{1}-z^{1}|\leq\varepsilon\right\}$, where assumption \ref{ass:growth} yields
  \begin{equation*}
    \int_{R_{1}}\left|\sigma(x-z)-\sigma(y-z)\right|f_{t}\,dz\leq\sum_{i=1}^{k}\int\mathds{1}_{|x^{1}-z^{1}|\leq\varepsilon}\left(|x^{i}-z^{i}|^{2-\alpha}+|y^{i}-z^{i}|^{2-\alpha}\right)f_{t}\,dz.
  \end{equation*}
  Using Hölder's inequality twice with exponents $\left(\frac{2}{1-\alpha},\frac{2}{\alpha}\right)$ and $\left(p=\frac{d}{d-\alpha},\frac{d}{\alpha}\right)$ gives
  \begin{align*}
    \int\mathds{1}_{|x^{1}-z^{1}|\leq\varepsilon}|x^{i}-z^{i}|^{2-\alpha}f_{t}\,dz&
    \leq\left(\int\mathds{1}_{|x^{1}-z^{1}|\leq\varepsilon}f_{t}^{(1)}\,dz\right)^{\frac{\alpha}{2}}\left(\int|x^{i}-z^{i}|^{2}f^{(i)}_t\,dz^{(i)}\right)^{1-\frac{\alpha}{2}}\\
    &\leq C\varepsilon^{\frac{\alpha^{2}}{2}}\left(m_{2}(f)+|x|^{2}\right)^{1-\frac{\alpha}{2}}\Vert f_{t}^{(1)}\Vert_{L^{p}}^{\frac{\alpha}{2}}\\
    &\leq C\varepsilon^{\frac{\alpha^{2}}{2}}\left(m_{2}(f)+|x|^{2}+\Vert f_{t}^{(1)}\Vert_{L^{p}}\right).
  \end{align*}
  The same computation bounds the term with $|y^i-z^i|$, yielding
  \begin{equation*}
    \int_{R_{1}}\left|\sigma(x-z)-\sigma(y-z)\right|f_{t}\,dz\leq C\varepsilon^{\frac{\alpha^{2}}{2}}\left(m_{2}(f)+|x|^{2}+|y|^{2}+\max_{1\leq i\leq k}\Vert f_{t}^{(i)}\Vert_{L^{p}}\right).
  \end{equation*}
  \textit{Region 2:} $R_{2}=\left\{ \min_{i}|y^{i}-z^{i}|\leq\varepsilon\right\} $ is analogous to the previous case.

  \textit{Region 3:} $R_{3}=\left\{ \min_{i}|x^{i}-z^{i}|\geq\varepsilon,\min_{i}|y^{i}-z^{i}|\geq\varepsilon\right\}$ is bounded using assumption \ref{ass:local_Lipchitz} with the same computation used in Section \ref{subsec:coeff_estimates}, which gives
  \begin{equation*}
    \int_{R_{3}}\left|\sigma(x-z)-\sigma(y-z)\right|f_{t}\,dz\leq C(1-\log\varepsilon)\max_{1\leq i\leq k}(\Vert f_{t}^{(i)}\Vert_{L^{p}},1)|x-y|^{2},
  \end{equation*}
  finishing the proof.
\end{proof}
Now we deal with the drift terms.
\begin{lemma}\label{lem:b_est_mod_soft}
  If $0<\alpha<1$ then for each $0<\varepsilon<1$ we have
  \begin{align}
    \int\left|b(x-z)-b(y-z)\right|f_{t}(z)\,dz&\label{eq:b_lem_mod_soft1}
    \leq C(1-\log\varepsilon)\max_{i}(\Vert f_{t}^{(i)}\Vert_{L^{p}},1)|x-y|\\
    &+C\varepsilon^{\alpha^{2}}\left(m_{2}(f)+1+|x|+|y|\right),\nonumber
  \end{align}
  and we also have
  \begin{align}
    & \int\left|b(x-x_{*})-b(y-y_{*})\right||x-y|\,d\Pi_{t}\,d\Pi_{t}^{*}\label{eq:b_lem_mod_soft2}\\ 
    \leq & \; C(1-\log\varepsilon)\max_{1\leq i\leq k}(\Vert f_{t}^{(i)}\Vert_{L^{p}},\Vert g_{t}^{(i)}\Vert_{L^{p}},1)\int|x-y|^{2}\,d\Pi_{t}\nonumber	\\
    +&\; C\varepsilon^{\alpha^{2}}\left(m_{2}(f)+m_{2}(g)+\max_{1\leq i\leq k}(\Vert f_{t}^{(i)}\Vert_{L^{p}},\Vert g_{t}^{(i)}\Vert_{L^{p}},1)\right)\left(\int|x-y|^{2}\,d\Pi_{t}\right)^{\frac{1}{2}}.\nonumber
  \end{align}
\end{lemma}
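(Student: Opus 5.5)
I will follow the region-splitting scheme of the proof of \cref{lem:A_b_estimates} and \cref{lem:sigma_est_mod_soft}, fixing $0<\varepsilon<1$ and dropping the harmless $+1$ terms from assumptions \ref{ass:growth}, \ref{ass:local_Lipchitz}. The difference with the case $\alpha\ge 2$ is that for $0<\alpha<1$ the growth bound $|b(x)|\lesssim \sum_i |x^i|^{1-\alpha}$ is no longer singular. Instead it grows. So the small-distance regions must be controlled by combining the smallness of $f^{(1)}_t$-mass of an $\varepsilon$-ball (from $L^p$) with moment bounds, exactly as in \cref{lem:sigma_est_mod_soft}.

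For \cref{eq:b_lem_mod_soft1} I will split $\mathbb R^n$ into $R_1=\{|x^1-z^1|\le\varepsilon\}$ (after symmetry, among the $\min_i$ sets), $R_2$ the analogous set with $y$, and $R_3$ the complement. On $R_3$ assumption \ref{ass:local_Lipchitz} gives $|b(x-z)-b(y-z)|\le C|x-y|\sum_i(|x^i-z^i|^{-\alpha}+|y^i-z^i|^{-\alpha})$ with the singularities cut at $\varepsilon$. Then \cref{eq:sing_int_log} (or simply \cref{eq:sing_int}, since $\alpha<d$ makes these integrable) yields the term $C(1-\log\varepsilon)\max(\Vert f^{(i)}_t\Vert_{L^p},1)|x-y|$. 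On $R_1$ I bound $|b(x-z)|+|b(y-z)|\lesssim\sum_i(|x^i-z^i|^{1-\alpha}+|y^i-z^i|^{1-\alpha})$ and use Hölder with exponents $(\frac1\alpha,\frac1{1-\alpha})$:
\[
\int \mathds 1_{|x^1-z^1|\le\varepsilon}|x^i-z^i|^{1-\alpha}f_t\,dz\le \Big(\int_{|x^1-z^1|\le\varepsilon}f^{(1)}_t\Big)^{\alpha}\Big(\int|x^i-z^i|f^{(i)}_t\Big)^{1-\alpha}.
\]
By Hölder with $(p,\frac d\alpha)$, the first factor is $\le C\varepsilon^{\alpha}\Vert f^{(1)}_t\Vert_{L^p}$, so its $\alpha$-power gives $\varepsilon^{\alpha^2}$. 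The second factor is $\le C(1+m_2(f)+|x|)$. Young's inequality then absorbs the product into $C\varepsilon^{\alpha^2}(m_2(f)+1+|x|+|y|+\Vert f^{(1)}_t\Vert_{L^p})$. I expect the $L^p$ norm to appear here as well, and will carry it along. $R_2$ is symmetric.

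For \cref{eq:b_lem_mod_soft2} I will integrate $I=|x-y||b(x-x_*)-b(y-y_*)|$ against $d\Pi_t\,d\Pi^*_t$ with the same three regions in the pairs $(x,x_*)$ and $(y,y_*)$. On $R_3$ I repeat verbatim the Region 3 argument of the proof of \cref{eq:b_estimate2}. There I use $|x-y|(|x-y|+|x_*-y_*|)\le C(|x-y|^2+|x_*-y_*|^2)$, the bound \cref{eq:sing_int_log}, and then $\int|x_*-y_*|^2d\Pi^*_t\le\int|x-y|^2d\Pi_t$ by optimality of $\Pi_t^*$. On $R_1$ I will first apply Cauchy--Schwarz in $d\Pi_t$ to extract $(\int|x-y|^2d\Pi_t)^{1/2}$. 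It then remains to bound
\[
\Big(\int\Big(\int\mathds 1_{|x^1-x^1_*|\le\varepsilon}(|b(x-x_*)|+|b(y-y_*)|)\,d\Pi^*_t\Big)^2d\Pi_t\Big)^{1/2}.
\]
The inner integral is handled as above with Hölder $(\frac1\alpha,\frac1{1-\alpha})$ in $d\Pi^*_t$. The indicator term gives $\varepsilon^{\alpha^2}\Vert f^{(1)}_t\Vert^{\alpha}_{L^p}$. The remaining factor is $(\int|y^i-y^i_*|\,dg_t)^{1-\alpha}\lesssim(1+m_2(g)+|y|)^{1-\alpha}$. After squaring and integrating in $d\Pi_t$, the moments of $f_t,g_t$ are bounded by $m_2$, and Young's inequality gives the stated prefactor.

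The main obstacle is this $R_1$ step for the cross term $|b(y-y_*)|$ while localizing in $x_*$. One must make sure the Hölder split is done under $\Pi^*_t$ jointly, so that the indicator only sees the $f^{(1)}_t$ marginal and the growth factor only sees $g_t$ moments. One must also check that squaring before integrating in $d\Pi_t$ keeps only second moments: the exponent $2(1-\alpha)\le 2$ makes this work.
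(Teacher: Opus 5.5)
Your proposal is correct and is essentially the paper's own argument. The paper omits the proof as ``totally analogous to \cref{lem:sigma_est_mod_soft}'', and you reproduce it: the same three-region split, H\"older with exponents $(\frac1\alpha,\frac1{1-\alpha})$ and then $(p,\frac d\alpha)$ on the near regions to get $\varepsilon^{\alpha^2}$, Cauchy--Schwarz in $d\Pi_t$ with the joint H\"older step under $\Pi^*_t$ for \eqref{eq:b_lem_mod_soft2}, and \eqref{eq:sing_int_log} plus $d_2^2(f_t,g_t)\le\int|x-y|^2\,d\Pi_t$ on the far region. The extra $\max_i\Vert f^{(i)}_t\Vert_{L^p}$ you carry in the $\varepsilon^{\alpha^2}$-term of \eqref{eq:b_lem_mod_soft1} is omitted in the displayed statement, but it is exactly what this argument yields (compare \cref{lem:sigma_est_mod_soft} and \eqref{eq:b_lem_mod_soft2}), and it is harmless where the bound is used.

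One correction: your parenthetical alternative on $R_3$ is wrong. \eqref{eq:sing_int} requires $\beta<\alpha$, and at the critical exponent $p=\frac{d}{d-\alpha}$ the integral $\int|x^i-z^i|^{-\alpha}f^{(i)}_t\,dz^i$ need not be finite. That is precisely why the $\varepsilon$-cutoff and the factor $(1-\log\varepsilon)$ from \eqref{eq:sing_int_log} are needed.
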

The proof is totally analogous to \cref{lem:sigma_est_mod_soft}, so we omit it. Next, we explain the modifications required in Sections \ref{subsec:main_comput} and \ref{subsec:existence_plans}.

\textit{Main computation (Section \ref{subsec:main_comput}).} The drift term is still estimated using
\begin{align}
  \int\left|\sigma(x-x_{*})-\sigma(y-y_{*})\right|^{2}\,d\Pi_{t}\,d\Pi_{t}^{*}&\nonumber
  \lesssim\int\left(\int\left|\sigma(x-x_{*})-\sigma(x-y_{*})\right|^{2}f_{t}(x)\,dx\right)\,d\Pi_{t}^{*}\\
  &+\int\left(\int\left|\sigma(x-y_{*})-\sigma(y-y_{*})\right|^{2}g_{t}(y_{*})\,dy_{*}\right)\,d\Pi_t.\label{eq:a_term2_adaptation}
\end{align}
For the first term  \cref{lem:sigma_est_mod_soft} gives
\begin{align*}
  \int\left(\int\left|\sigma(x-x_{*})-\sigma(x-y_{*})\right|^{2}f_{t}(x)\,dx\right)\,d\Pi_{t}^{*}&
  \leq C(1-\log\varepsilon)\max_{1\leq i\leq k}(\Vert f_{t}^{(i)}\Vert_{L^{p}},1)\int|x-y|^{2}\,d\Pi_{t}	\\
  &+C\varepsilon^{\frac{\alpha^{2}}{2}}\left(m_{2}(f)+\max_{1\leq i\leq k}\Vert f_{t}^{(i)}\Vert_{L^{p}}\right),
\end{align*}
so we may pick ${\displaystyle \varepsilon^{\frac{\alpha^{2}}{2}}=\min\left(\int|x-y|^{2}\,d\Pi_t,e^{-1}\right)}$. The term \cref{eq:a_term2_adaptation} is estimated similarly. Finally, the drift term is bounded using \cref{eq:b_lem_mod_soft2} with ${\displaystyle \varepsilon^{\alpha^{2}}=\min\left(\int|x-y|^{2}\,d\Pi_t,e^{-1}\right)}$.

\textit{Existence of the curve of transport plans (Section \ref{subsec:existence_plans}).}

In Step 1 we show that $\Phi:L^{\infty}\left([0,T];L^{2}(\Omega)\right)\longrightarrow L^{\infty}\left([0,T];L^{2}(\Omega)\right)$. In this case the bound is even simpler since $|\sigma|^2$ has subquadratic growth when $0<\alpha <2$ and $b$ has sub-linear growth when $0<\alpha <1$. For example, the drift term is estimated by
\begin{equation*}
  \mathbb{E}\left[\int\left|\sigma(X_{t}-x_{*})\right|^{2}f_{t}(x_{*})\,dx_{*}\right]
  \lesssim1+m_{2}(f)+\mathbb{E}\left[|X_{t}|^{2}\right].
\end{equation*}
This yields bounds on $\mathbb{E}\left[|\Phi(X)_{t}|^{2}\right]$ which allow a similar choice of small time interval in step 4 to guarantee that $\Phi:\mathcal{B}_{R}\longrightarrow\mathcal{B}_{R}$.

In Step 2 we obtain a differential inequality for
\begin{align*}
  \frac{1}{2}\frac{d}{dt}\mathbb{E}\left[|\Phi(X)_{t}-\Phi(\tilde{X})_{t}|^{2}\right] &
  =\mathbb{E}\left[\left(c(X_{t})-c(\tilde{X}_{t})\right)\left(\Phi(X)_{t}-\Phi(\tilde{X}_{t})\right)\right]
  +\mathbb{E}\left[|\mathfrak m(X_{t})-\mathfrak m(\tilde{X}_{t})|^{2}\right]	\\
  & +\mathbb{E}\left[\left(\Phi(X)_{t}-\Phi(\tilde{X})_{t}\right)\cdot\int\left(b(X_{t}-x_{*})-b(\tilde{X}_{t}-x_{*})\right)f_{t}(x_{*})\,dx_{*}\right]\\
  & +\mathbb{E}\left[\int|\sigma(X_{t}-x_{*})-\sigma(\tilde{X}_{t}-x_{*})|^{2}f(x_{*})\,dx_{*}\right].
\end{align*}
It is enough to restrict our attention to the case $\sup_{[0,\tau]}\mathbb E[|X_t|^2]\leq R$, where \cref{lem:sigma_est_mod_soft} gives
\begin{align*}
  \mathbb{E}\left[\int\left|\sigma(X_{t}-x_{*})-\sigma(\tilde{X}_{t}-x_{*})\right|^{2}f(x_{*})\,dx_{*}\right]&
  \leq C(1-\log\varepsilon)\max_{1\leq i\leq k}(\Vert f_{t}^{(i)}\Vert_{L^{p}},1)\mathbb{E}\left[|X_{t}-\tilde{X}_{t}|^{2}\right]\\
  &+C\varepsilon^{\frac{\alpha^{2}}{2}}\left(m_{2}(f)+\max_{1\leq i\leq k}(\Vert f_{t}^{(i)}\Vert_{L^{p}},1)\right).
\end{align*}
Now simply choose $\varepsilon^{\frac{\alpha^{2}}{2}}=\min\left(\mathbb{E}\left[|X_{t}-\tilde{X}_{t}|^{2}\right],e^{-\frac{\alpha^{2}}{2}}\right)$. For the drift term, \cref{lem:b_est_mod_soft} gives
\begin{align*}
  &\mathbb{E}\left[|\Phi(X)_{t}-\Phi(\tilde{X})_{t}|\int|b(X_{t}-x_{*})-b(X_{t}-x_{*})|f_{t}(x_{*})\,dx_{*}\right]	\\
  \leq & \,C\,(1-\log\varepsilon)\max_{1\leq i\leq k}(\Vert f_{t}^{(i)}\Vert_{L^{p}},1)\mathbb{E}\left[|X_{t}-\tilde{X}_{t}|^{2}\right]^{\frac{1}{2}}\mathbb{E}\left[|\Phi(X)_{t}-\Phi(\tilde{X})_{t}|^{2}\right]^{\frac{1}{2}}\\
  +& \,C\,\varepsilon^{\alpha^{2}}\left(m_{2}(f)+1\right)\mathbb{E}\left[|\Phi(X)_{t}-\Phi(\tilde{X}_{t})|^{2}\right]^{\frac{1}{2}}.
\end{align*}
Now set $\varepsilon^{\alpha^{2}}=\min\left(\mathbb{E}\left[|\Phi(X)_{t}-\Phi(\tilde{X})_{t}|^{2}\right]^{\frac{1}{2}},e^{-1}\right)$ and use $a\Psi(b)\leq\Psi(a^{2})+\Psi(b^{2})$ to deduce
\begin{align*}
  &\mathbb{E}\left[|\Phi(X)_{t}-\Phi(\tilde{X})_{t}|\int|b(X_{t}-x_{*})-b(X_{t}-x_{*})|f_{t}(x_{*})\,dx_{*}\right]\\
  \leq&\;C\left(1+m_{2}(f)+\max_{1\leq i\leq k}(\Vert f_{t}^{(i)}\Vert_{L^{p}},1)\right)
  \left(\Psi\left(\mathbb{E}\left[|X_{t}-\tilde{X}_{t}|^{2}\right]\right)+\Psi\left(\mathbb{E}\left[|\Phi(X)_{t}-\Phi(\tilde{X})_{t}|^{2}\right]\right)\right).
\end{align*}
The rest of the argument from Section \ref{subsec:existence_plans} plays out without relevant modifications.

\section{Symmetrization method}\label{sec:symmetrization}
This method was explained in detail in \cite{paper1}, so we spare some of the details. Especially since the computations for the fuzzy Landau and multiespecies equations have very similar structures. 

\subsection{Fuzzy Landau equation}\label{subsec:sym_fuzzy}
In this section we derive the following result, which can be proven rigorously using techniques from Section \ref{sec:gen_thm}.
\begin{theorem}\label{thm:fuzzy_sym_method}
  Let $f,g$ be weak solutions of the fuzzy Landau equation for $-3<\gamma \leq -2$ with bounded second moments. Let $p>\frac{3}{3+\gamma}$ and assume that $f,g\in L^{\alpha_{p,\gamma}}_tL^p_vL^1_x$. That is, we have
  \begin{equation*}
    \int_{0}^{T}\left(\Vert\tilde{f}_{t}\Vert_{L_{v}^{p}}^{\alpha_{p,\gamma}}+\Vert\tilde{g}_{t}\Vert_{L_{v}^{p}}^{\alpha_{p,\gamma}}\right)\,dt<\infty,
    \qquad\tilde{f}(v):=\int f(x,v)\,dx,
    \quad\alpha_{p,\gamma}:=-\frac{\gamma p}{3(p-1)}\in(0,1).
  \end{equation*}
  Then we have
  \begin{equation*}
    d_{2}^{2}(f_{t},g_{t})\leq d_{2}^{2}(f_{0},g_{0})\exp\left\{ C\int_{0}^{t}\max(\Vert\tilde{f}_{t}\Vert_{L_{v}^{p}}^{\alpha_{p,\gamma}},\Vert\tilde{g}_{t}\Vert_{L_{v}^{p}}^{\alpha_{p,\gamma}},1)\,dt\right\},
  \end{equation*}
  where $C>0$ depends on $\gamma$ and $\Vert \kappa\Vert_{\textnormal{Lip}}$.
\end{theorem}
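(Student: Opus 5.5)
We will argue formally, in the spirit of the Proposition on the heat equation: we compute $\frac{d}{dt}d_2^2(f_t,g_t)$ through the Kantorovich/Hamilton--Jacobi dual formulation and control it by a Gronwall-type bound. Let $u(s,x,v)$, $0\le s\le1$, be an optimal solution of \eqref{eq:HJ_formulation} in $\mathbb R^6$. Let $\rho(s)$ be the geodesic joining $f_t$ and $g_t$ as in \eqref{eq:CE_heat}. By the envelope theorem and the weak formulation in Definition~\ref{defn:weak_sol_fuzzy},
\begin{equation*}
\frac{d}{dt}d_2^2(f_t,g_t)=\int u(1)\,\mathcal L_g g_t-\int u(0)\,\mathcal L_f f_t .
\end{equation*}
Here $\mathcal L_f\varphi=-v\cdot\nabla_x\varphi+2(\kappa b*f)\cdot\nabla_v\varphi+(\kappa\Phi*f):D^2_v\varphi$ acts on the test function. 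The transport part $v\cdot\nabla_x$ is linear with Lipschitz coefficient. Differentiating it along the geodesic contributes at most $C\int|\nabla u|^2\rho=C\,d_2^2$.

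For the collision part we lift, as in \cite{GS24,paper1}. We write both terms with the doubled densities $F=f\otimes f$, $G=g\otimes g$ on $\mathbb R^{12}$ and the doubled potential $U(s,z,z_*)=u(s,z)+u(s,z_*)$. The collision operator becomes a linear operator $Q$ on $\mathbb R^{12}$ with diffusion matrix $\kappa(x-x_*)\Phi(v-v_*)$ acting in the direction $\nabla_v-\nabla_{v_*}$. Then $U$ solves the Hamilton--Jacobi equation in the doubled space. The doubled geodesic is $\rho\otimes\rho$, and $\frac12 d_2^2(F,G)=d_2^2(f,g)$. We therefore compute $\int_0^1\frac{d}{ds}\int (Q^*U)\,\rho\otimes\rho\,ds$ and use a Bochner-type identity for the degenerate operator. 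This produces a negative term $-\int|\sqrt{\kappa}\,\sigma^T D^2U\,\cdots|^2$. It also produces commutator terms in which derivatives fall on the coefficients $\sqrt{\kappa(x-x_*)}\,r(v-v_*)$, with $r$ as in the outline.

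The main obstacle is bounding these commutator terms. They have the schematic form $\int|\nabla(\sqrt\kappa\, r)|^2|\nabla U|^2\rho\otimes\rho$ plus a cross term absorbed by the good square. Since $|\nabla_v r(w)|^2\lesssim|w|^{\gamma}$ and $\sqrt\kappa$ is Lipschitz and bounded, we get
\begin{equation*}
\int_0^1\!\!\int |\nabla u(s,z)|^2\Big(\int |v-v_*|^{\gamma}\,\tilde\rho_s(v_*)\,dv_*+C\Big)\rho_s(z)\,dz\,ds .
\end{equation*}
Here $\tilde\rho_s$ is the $v$-marginal of the geodesic. The $x$-dependence enters only through $\kappa$, which costs the factor $\Vert\kappa\Vert_{\mathrm{Lip}}$ but no integrability in $x$. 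This is why only $L^p_vL^1_x$ appears.

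It remains to bound the inner convolution uniformly. We use Hölder and interpolation, splitting at a radius $R$: $\int|w|^\gamma\tilde\rho\le R^{\gamma}+C R^{3+\gamma-3/p}\Vert\tilde\rho_s\Vert_{L^p}$. Optimizing in $R$ gives $C\Vert\tilde\rho_s\Vert_{L^p}^{\alpha_{p,\gamma}}$ with $\alpha_{p,\gamma}=-\gamma p/(3(p-1))$, which requires $p>3/(3+\gamma)$. Displacement convexity of $\Vert\cdot\Vert_{L^p}^p$ along $W_2$ geodesics (McCann) gives $\Vert\tilde\rho_s\Vert_{L^p}\le\max(\Vert\tilde f\Vert_{L^p},\Vert\tilde g\Vert_{L^p})$. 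This step needs care, since the $v$-marginal of a geodesic in $\mathbb R^6$ is only dominated after handling the $x$-integration, so we apply convexity to the full density and then Minkowski. Finally $\int_0^1\int|\nabla u|^2\rho=d_2^2$. Altogether
\begin{equation*}
\frac{d}{dt}d_2^2\le C\max(\Vert\tilde f_t\Vert_{L^p}^{\alpha_{p,\gamma}},\Vert\tilde g_t\Vert_{L^p}^{\alpha_{p,\gamma}},1)\,d_2^2,
\end{equation*}
and Gronwall yields the stated bound.
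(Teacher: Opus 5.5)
Your outline follows the paper's formal derivation closely. The common steps are Hamilton--Jacobi duality, then a bound on the transport contribution by a multiple of $d_2^2$. You obtain that bound from the $\Gamma_2$-type identity for a drift with bounded derivative, while the paper uses Hopf--Lax and a change of variables; the two are equivalent. Then come the lift to $F=f\otimes f$, $U=u\oplus u$, $P=\rho\otimes\rho$, a Bochner-type identity for $Q$, Young's inequality, and the Hölder bound $\Vert I_\gamma\tilde\rho\Vert_{L^\infty}\le C\Vert\tilde\rho\Vert_{L^p}^{\alpha_{p,\gamma}}$. You bound every coefficient-derivative term crudely by $C(1+r^\gamma)|\nabla U|^2$ instead of using the exact cancellation of the $[\hat e_i,\tilde b_j]$ terms; that is harmless at this formal level. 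The step that does not work is the one you flag as needing care: controlling $\sup_{0\le s\le1}\Vert\tilde\rho_s\Vert_{L^p_v}$ by $\max(\Vert\tilde f_t\Vert_{L^p_v},\Vert\tilde g_t\Vert_{L^p_v})$.

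\textbf{Why the geodesic step fails.} Displacement convexity on $\mathbb R^6$ only bounds $\Vert\rho_s\Vert_{L^p_{x,v}}$ in terms of $\Vert f_t\Vert_{L^p_{x,v}}$ and $\Vert g_t\Vert_{L^p_{x,v}}$. The hypothesis does not control these, since $f_t$ may be concentrated in $x$. Minkowski only gives $\Vert\tilde\rho\Vert_{L^p_v}\le\Vert\rho\Vert_{L^1_xL^p_v}$, and that is not dominated by $\Vert\rho\Vert_{L^p_{x,v}}$ on $\mathbb R^3_x$.

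More fundamentally, the $v$-marginal of the geodesic in $\mathbb R^6$ is not the geodesic between the $v$-marginals, and it can concentrate. In one dimension per variable, let $f$ be uniform on $\{0\le v\le1,\ |x-v|\le w\}$ and $g$ uniform on $\{0\le v\le 1,\ |x-\lambda(1-v)|\le w\}$ with $\lambda>1$. Both have $v$-marginal uniform on $[0,1]$. For the limiting segments $\{(t,t)\}$ and $\{(\lambda s,1-s)\}$, the cost of a coupling of the uniform parametrizations is a constant plus $2(1-\lambda)\int ts$. Hence, as $w\to0$, the optimal plans converge to $(t,t)\mapsto(\lambda t,1-t)$. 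The $v$-marginal of the midpoint then converges to $\delta_{1/2}$, so its $L^p$ norm blows up. Tensorizing three copies gives the same phenomenon in $\mathbb R^3\times\mathbb R^3$. So no bound of the kind you need can hold.

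\textbf{How to close it.} The paper's own symmetrization derivation also uses $\Vert\rho\Vert_{L^p_vL^1_x}$ of the geodesic and concludes without comment. You have found its weak spot, but it cannot be repaired this way. What works is the route the paper indicates when it says the result can be proven with the techniques of Section~\ref{sec:gen_thm}. In the coupling argument, the singular integrals are only tested against $f_t$ and $g_t$ themselves, through $\Pi_t,\Pi_t^*\in\Gamma(f_t,g_t)$, never against an interpolant. For $p>\frac{3}{3+\gamma}$, the Lipschitz-type bounds on $\sigma$ and $b$ then give a linear differential inequality with coefficient $C\max(\Vert\tilde f_t\Vert_{L^p}^{\alpha_{p,\gamma}},\Vert\tilde g_t\Vert_{L^p}^{\alpha_{p,\gamma}},1)$. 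Gronwall then yields the stated estimate.
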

We use the formulation of the $2$-Wasserstein distance in terms of a Hamilton-Jacobi equation:
\begin{equation}\label{eq:HJ_formulation2}
  d_{2}^{2}(f,g)=\sup\left\{ \int_{\mathbb R^d} u_1(v)g(v)\,dv-\int_{\mathbb R^d} u_0(v)f(v)\,dv:\partial_{s}u_s+\frac{1}{2}\left|\nabla u_s\right|^{2}=0\right\}.
\end{equation}
It is well known that an optimal $u$ generates the $2$-Wasserstein geodesic $(\rho_s)_{0\leq s\leq1}$ connecting $f$ and $g$ through the continuity equation
\begin{equation}\label{eq:CE}
  \partial_s \rho_s + \textnormal{div}(\rho_s \nabla u_s) = 0,\qquad \rho_0 = f,\; \rho_1 = g.
\end{equation}
See \cite{villani2003topicsOT} for more details on this formulation. Let us denote the collision operator of the fuzzy Landau equation as $q$:
\begin{equation*}
  q(f)(x,v) = \textnormal{div}_v\int_{\mathbb R^6} \kappa(x-x_*)\Phi(v-v_*) (\nabla_v-\nabla_{v_*})\big(f(x,v) f(x_*,v_*)\big)\,dx_*\,dv_*.
\end{equation*}
Simply differentiating in \cref{eq:HJ_formulation2} gives
\begin{align}
  \frac{d}{dt}d_{2}^{2}(f,g) & =\int u_1\partial_{t}g\,dx\,dv-\int u_0\partial_{t}f\,dx\,dv	\nonumber\\
  & =-\int u_1\left(v\cdot\nabla_{x}g\right)\,dx\,dv+\int u_0\left(v\cdot\nabla_{x}f\right)\,dx\,dv \label{eq:transport_term}\\
  & +\int u_1q(g)\,dx\,dv-\int u_0q(f)\,dx\,dv\label{eq:collision_term}.
\end{align}
The contribution \eqref{eq:transport_term} coming from the transport term can be controlled directly in terms of the Wasserstein distance. To see this, recall that the Hopf–Lax formula for the Hamilton-Jacobi equation (see Section 5.4.6 of \cite{villani2003topicsOT}) gives
\begin{equation*}
  u_0(x,v)=\phi(x,v)-\frac{|x|^{2}+|v|^{2}}{2},\qquad u_1(x,v)=\frac{|x|^{2}+|v|^{2}}{2}-\psi(x,v),
\end{equation*}
where $\psi = \phi^*$ is the Legendre transform of $\phi$, which provides the inverse optimal transport map $(\nabla \phi)^{-1} = \nabla \psi$. Then, using the change of variables $(y,w)=\nabla_{x,v}\phi$ gives
\begin{align*}
  (\ref{eq:transport_term}) & =\int w\cdot\left(y-\nabla_{y}\phi\right)g\,dy\,dw+\int v\cdot\left(x-\nabla_{x}\phi\right)f\,dx\,dv \\
  & =\int\nabla_{v}\phi\cdot\left(\nabla_{x}\phi-x\right)f\,dx\,dv+\int v\cdot\left(x-\nabla_{x}\phi\right)f\,dx\,dv\\
  & =\int\left(\nabla_{x}\phi-x\right)\left(\nabla_{v}\phi-v\right)\,f\,dx\,dv\leq\frac{1}{2}d_{2}^{2}(f,g).
\end{align*}
All in all, we have
\begin{equation*}
  \frac{d}{dt}d_{2}^{2}(f,g) \leq \frac{1}{2}d_2^2(f,g)
  +\int u_1q(g)\,dx\,dv-\int u_0q(f)\,dx\,dv.
\end{equation*}
Therefore, we may assume that $f$ and $g$ satisfy a transport free fuzzy Landau equation
\begin{equation}\label{eq:transport_free}
  \partial_t f = q(f),
  \qquad \partial_tg = q(g)
\end{equation}
and estimate $\frac{d}{dt}d_2^2(f,g)$. The first step is to establish a lifting lemma, which reduces the problem to estimating the evolution of the Wasserstein distance for a linear equation associated with the following \emph{lifted operator}:
\begin{equation*}
  Q(F):=\kappa(x-x_*)\left(\textnormal{div}_{v}-\textnormal{div}_{v_*}\right)\Big[\Phi(v-v_*)\left(\nabla_{v}-\nabla_{v_*}\right)F\Big].
\end{equation*}
The operator $Q$ acts on functions of $\mathbb R^6\times \mathbb R^6$ and is degenerate elliptic. It is connected with the Landau collision operator through tensorizaton and the projection in the $x$ and $v$ variables:
\begin{equation}\label{eq:Q_q}
  q(f)=\int_{\mathbb{R}^{6}}Q(f\otimes f)\,dx_*\,dv_*,\qquad(f\otimes f)(x,v,x_*,v_*)=f(x,v)f(x_*,v_*).
\end{equation}
Most importantly, we have the following lifting property:
\begin{lemma}\label{lem:lifting_fuzzy}
  Let $f,g$ be two solutions of (\ref{eq:transport_free}). Then we have
  \begin{equation*}
    \frac{d}{dt}d_2^2(f,g) = \frac{1}{2} \left.\frac{d}{d\tau}\right|_{\tau = 0} d_2^2(\tilde F,\tilde G)
  \end{equation*}
  where $\tilde F$, $\tilde G$ are solutions in $\mathbb R^6 \times \mathbb R^6$ of 
  \begin{equation*}
    \begin{cases}
      \partial_{\tau}\tilde{F}=Q(\tilde{F})\\
      \tilde{F}|_{\tau=0}=F,
    \end{cases},\begin{cases}
      \partial_{\tau}\tilde{G}=Q(\tilde{G})\\
      \tilde{G}|_{\tau=0}=G,
    \end{cases},
  \end{equation*}
  with $F = f_t\otimes f_t$, $G = g_t\otimes g_t$.
\end{lemma}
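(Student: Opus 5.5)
The plan is to compute both sides through the Hamilton--Jacobi dual formulation \eqref{eq:HJ_formulation2} and match them term by term. Let $u_s$ be an optimal potential for $d_2^2(f_t,g_t)$ on $\mathbb R^6$, with optimal pair $(u_0,u_1)$. Then $U_s(x,v,x_*,v_*):=u_s(x,v)+u_s(x_*,v_*)=(u_s\oplus u_s)$ solves the Hamilton--Jacobi equation on $\mathbb R^{12}$, since $|\nabla U_s|^2=|\nabla u_s|^2\oplus|\nabla u_s|^2$.

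\textbf{Step 1: $U$ is optimal for the lifted problem.} I would first show that $U$ is an optimal potential for $d_2^2(F,G)$ with $F=f_t\otimes f_t$ and $G=g_t\otimes g_t$. The distance tensorizes, $d_2^2(F,G)=2d_2^2(f_t,g_t)$: the product of optimal plans gives one inequality and projecting onto each factor gives the other. On the other hand,
\[
\int U_1\,G-\int U_0\,F = 2\Big(\int u_1g_t-\int u_0f_t\Big)=2d_2^2(f_t,g_t).
\]
So $U$ attains the supremum in \eqref{eq:HJ_formulation2} for the pair $(F,G)$.

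\textbf{Step 2: envelope argument.} By the envelope principle (Danskin's argument), the derivative of a supremum of linear functionals is the derivative evaluated at the maximizer. Hence
\[
\left.\frac{d}{d\tau}\right|_{\tau=0}d_2^2(\tilde F,\tilde G)=\int U_1\,Q(G)-\int U_0\,Q(F),
\qquad
\frac{d}{dt}d_2^2(f,g)=\int u_1\,q(g)-\int u_0\,q(f).
\]
Rigorously, one gets matching upper and lower one-sided difference quotients by using the fixed potential $U$ (resp. $u$) as a competitor at nearby times. This is also where uniqueness of the optimal potential, up to constants, enters if the derivative is to exist two-sidedly. At the formal level the paper works at, this step is accepted.

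\textbf{Step 3: matching the two expressions.} I would use the symmetry $Q(F)(x,v,x_*,v_*)=Q(F)(x_*,v_*,x,v)$ for symmetric $F$. This holds because the operator is invariant under swapping the starred and unstarred variables: $\kappa$ is even, $\Phi$ is even, and $(\nabla_v-\nabla_{v_*})$ changes sign while appearing twice. Then
\[
\int (u\oplus u)\,Q(F)=2\int u(x,v)\,Q(F)\,dx\,dv\,dx_*\,dv_*=2\int u\,q(f),
\]
using \eqref{eq:Q_q}. This yields $\int U_1Q(G)-\int U_0Q(F)=2\big(\int u_1q(g)-\int u_0q(f)\big)$, which is exactly the identity with the factor $\frac12$.

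The main obstacle is Step 2: the differentiability of $d_2^2$ along both flows and the validity of the envelope formula. This needs enough regularity of the potentials for the integrals against $Q(F)$ (second derivatives of $U$ against singular coefficients) to make sense. I would justify it formally, or under the smoothness and positivity assumptions mentioned in the introduction, as the paper indicates for this section.
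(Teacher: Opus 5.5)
Your proposal is correct and follows essentially the same route as the paper. The paper's proof also takes the optimal lifted potential to be $U=u\oplus u$, uses $q(f)=\int Q(f\otimes f)\,dx_*\,dv_*$, and uses the swap symmetry of $Q(F)$ to produce the factor $\frac12$; your tensorization argument for the optimality of $u\oplus u$ and your envelope step supply what the paper takes as known, and you rightly note that the swap symmetry requires $\kappa$ to be even, which the paper assumes implicitly.
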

\begin{proof}
  We use formulation \cref{eq:HJ_formulation2} and consider the geodesic between $f$ and $g$ given by \cref{eq:CE}. It is know that the formulation \cref{eq:HJ_formulation2} applied to $d_2^2(F,G)$, gives an optimal $U$ which decomposes as
  \begin{equation*}
    U_s(x,v,x_*,v_*)=u_s(x,v)+u_s(x_*,v_*),
  \end{equation*}
  where $u$ is optimal for $d_2^2(f,g)$. Then the lifting property follows from \cref{eq:Q_q}
  \begin{align*}
    \frac{d}{dt}d_2^2(f,g)  
    & =\int_{\mathbb{R}^{6}}u(x,v,1)q(g)\,dx\,dv-\int_{\mathbb{R}^{6}}u(x,v,0)q(f)\,dx\,dv\\
    & =\iint_{\mathbb{R}^{6}\times \mathbb{R}^{6}}u(x,v,1)Q(G)\,dx\,dv\,dx_*\,dv_*
    -\iint_{\mathbb{R}^{6}\times \mathbb{R}^{6}}u(x,v,0)Q(F)\,dx\,dv\,dx_*\,dv_*\\
    &=\frac{1}{2}\iint_{\mathbb{R}^{6}\times \mathbb{R}^{6}}U(\cdot,1)Q(G)-\frac{1}{2}\iint_{\mathbb{R}^{6}\times \mathbb{R}^{6}}U(\cdot,0)Q(F)
    =\frac{1}{2} \left.\frac{d}{d\tau}\right|_{\tau = 0} d_2^2(\tilde F,\tilde G),
  \end{align*}
  where in the last line we have used that $Q(F),Q(G)$ are symmetric in $(x,v)$, $(x_*,v_*)$.
\end{proof}
Next, it is convenient to re-write the lifted operator in terms of the vector fields
\begin{equation}\label{eq:defn_bj}
  b_{1}(v-v_*):=\begin{pmatrix}0\\
  v^{3}_*-v^{3}\\
  v^{2}-v_*^{2}
  \end{pmatrix},\; b_{2}(v-v_*):=
  \begin{pmatrix}v^{3}-v_*^{3}\\
  0\\
  v_*^{1}-v^{1}
  \end{pmatrix},\; b_{3}(v-v_*):=\begin{pmatrix}v_*^{2}-v^{2}\\
  v^{1}-v_*^{1}\\
  0
  \end{pmatrix}.
\end{equation}
These satisfy the properties 
\begin{equation*}
  \sum_{j=1}^{3}b_{j}(z)\otimes b_{j}(z)=\left|z\right|^{2}\mathbb{P}(z),\qquad\textnormal{span}\left\{ b_{j}(z)\right\} =\left\langle z\right\rangle ^{\perp},\qquad\textnormal{div}\,b_{j}=0.
\end{equation*}
Using these properties it is easy to check tha the lifted operator can be re-written as
\begin{equation}\label{eq:expanded_Q_fuzzy}
  Q(F)= r^\gamma \sum_{j=1}^3 \left(\tilde b_j \cdot \nabla \left(\tilde b_j \cdot \nabla F\right)\right).
\end{equation}
$\nabla=\nabla_{x,v,x_*,v_*}$, and we have introduced 
\begin{equation*}
  r := |v-v_*|,\qquad
  \tilde{b}_{j}(v-v_*):=\begin{pmatrix}0\\
  b_{j}(v-v_*)\\
  0\\
  -b_{j}(v-v_*)
  \end{pmatrix}\in\mathbb{R}^{12},\qquad j=1,2,3.
\end{equation*}
Now using \ref{lem:lifting_fuzzy} and integrating by parts yields
\begin{equation*}
  2\frac{d}{dt} d_{2}^{2}(f,g) =\int_{0}^{1}\int_{\mathbb R^6\times \mathbb R^6}\left[\nabla U\cdot\nabla Q(U)-\frac{1}{2}Q\big(\left|\nabla U\right|^{2}\big)\right]P,
\end{equation*}
where $P=\rho\otimes \rho$ is the Wasserstein geodesic between $F = f\otimes f$ and $G = g\otimes g$. We use the expression \cref{eq:expanded_Q_fuzzy} to expand this into
\begin{align}
  2\frac{d}{dt}d_{2}^{2}(f,g) & 
  =\sum_{j}\int_{0}^{1}\int_{\mathbb R^6\times \mathbb R^6}\nabla U\cdot\nabla\left(\tilde{b}_{j}\cdot\nabla\left(\tilde{b}_{j}\cdot\nabla U\right)\right) r^\gamma \kappa P
  \label{eq:term1_fuzzy}\\
  & -\frac{1}{2}\sum_{j}\int_{0}^{1}\int_{\mathbb R^6\times \mathbb R^6}\left(\tilde{b}_{j}\cdot\nabla\left(\tilde{b}_{j}\cdot\nabla\left|\nabla U\right|^{2}\right)\right) r^\gamma \kappa P \label{eq:term2_fuzzy}\\
  & +\gamma\sum_{j}\int_{0}^{1}\int_{\mathbb R^6\times \mathbb R^6}\left(\hat{n}\cdot\nabla U\right)\left(\tilde{b}_{j}\cdot\nabla\left(\tilde{b}_{j}\cdot\nabla U\right)\right)r^{\gamma-2}\kappa P\nonumber \\
  & +\sum_{j}\int_{0}^{1}\int_{\mathbb R^6\times \mathbb R^6}\left(\nabla \kappa\cdot\nabla U\right)\left(\tilde{b}_{j}\cdot\nabla\left(\tilde{b}_{j}\cdot\nabla U\right)\right) r^\gamma P,\nonumber 
\end{align}
where we have introduced the vector $\hat n\in \mathbb R^6\times \mathbb R^6$ given by
\begin{equation*}
  \nabla r^\gamma=\gamma\, r^{\gamma-2}\;\hat{n},\qquad\hat{n}=\begin{pmatrix}0\\
  v-v_*\\
  0\\
  v_*-v
  \end{pmatrix}\perp\tilde{b}_{j}.
\end{equation*}
Next, we show that, as it happens for the heat equation, $(\ref{eq:term1_fuzzy})+(\ref{eq:term2_fuzzy})$ is negative. To do this we expand the dot products in terms of the following orthonormal basis of $\mathbb R^6\times \mathbb R^6$:
\begin{equation*}
  \tilde{e}_{i}=
  \frac{1}{\sqrt 2}\begin{pmatrix}0\\
  e_{i}\\
  0\\
  e_{i}
  \end{pmatrix},\qquad
  \hat{e}_{i}= \frac{1}{\sqrt 2}\begin{pmatrix}0\\
  e_{i}\\
  0\\
  -e_{i}
  \end{pmatrix},\qquad \xi_i=\begin{pmatrix}e_{i}\\
  0\\
  0\\
  0
  \end{pmatrix},\qquad \eta_i=\begin{pmatrix}0\\
  0\\
  e_{i}\\
  0
  \end{pmatrix},
\end{equation*}
where $e_{i}$ denotes the canonical basis of $\mathbb{R}^{3}$. In particular, we use that
\begin{align*}
  \left(\tilde{b}_{j}\cdot\nabla\left(\tilde{b}_{j}\cdot\nabla\left|\nabla U\right|^{2}\right)\right)&
  = \sum_{i=1}^{3}\left(\tilde{b}_{j}\cdot\nabla\left(\tilde{b}_{j}\cdot\nabla\left[(\tilde{e}_{i}\cdot\nabla U)^{2}\right]\right)\right)
  +\sum_{i=1}^{3}\left(\tilde{b}_{j}\cdot\nabla\left(\tilde{b}_{j}\cdot\nabla\left[(\hat{e}_{i}\cdot\nabla U)^{2}\right]\right)\right)	\\
  & + \sum_{i=1}^{3}\left(\tilde{b}_{j}\cdot\nabla\left(\tilde{b}_{j}\cdot\nabla\left[(\xi_i\cdot\nabla U)^{2}\right]\right)\right)
  +\sum_{i=1}^{3}\left(\tilde{b}_{j}\cdot\nabla\left(\tilde{b}_{j}\cdot\nabla\left[(\eta_i\cdot\nabla U)^{2}\right]\right)\right),
\end{align*}
and similarly expand
\begin{equation*}
  \nabla U\cdot\nabla\left(\tilde{b}_{j}\cdot\nabla\left(\tilde{b}_{j}\cdot\nabla U\right)\right) 
  = \sum_{i=1}^{3}(\tilde{e}_{i}\cdot\nabla U)\left(\tilde{e}_{i}\cdot\nabla\left(\tilde{b}_{j}\cdot\nabla\left(\tilde{b}_{j}\cdot\nabla U\right)\right)\right) + \cdots
\end{equation*}
%
This decomposes (\ref{eq:term1_fuzzy}) and (\ref{eq:term2_fuzzy}) as
\begin{align*}
  (\ref{eq:term1_fuzzy}) & = \sum_{i,j=1}^3
  \left[I_1(\tilde b_j,\tilde e_i) + I_1(\tilde b_j,\hat e_i) + I_1(\tilde b_j,\xi_i) + I_1(\tilde b_j,\eta_i)\right]	\\
  (\ref{eq:term2_fuzzy}) & = \sum_{i,j=1}^3
  \left[I_2(\tilde b_j,\tilde e_i) + I_2(\tilde b_j,\hat e_i) + I_2(\tilde b_j,\xi_i) + I_2(\tilde b_j,\eta_i)\right],
\end{align*}
where we have defined the quantities
\begin{equation*}
  I_{1}(e,b):=\int e\cdot\nabla\left(b\cdot\nabla\left(b\cdot\nabla U\right)\right)\left(e\cdot\nabla U\right)r^\gamma \kappa P,\quad 
  I_{2}(e,b):=-\frac{1}{2}\int b\cdot\nabla\left( b\cdot\nabla\left[(e\cdot\nabla U)^{2}\right]\right)r^\gamma \kappa P.
\end{equation*}
The advantage of using $\tilde{e}_{i}$, $\hat{e}_{i}$ over the canonical basis of $\mathbb{R}^{12}$ is the simplicity of the Lie brackets $[\tilde{e}_{i},\tilde{b}_{j}]$,$[\hat{e}_{i},\tilde{b}_{j}]$. We remind the reader that if $a$, $b$ are vector fields then $[a,b]$ verifies the identity
\begin{equation*}
  a\cdot\nabla\left(b\cdot\nabla u\right)-b\cdot\nabla\left(a\cdot\nabla u\right)=[a,b]\cdot\nabla u
\end{equation*}
for any smooth function $u$. The components of $[a,b]$ are defined as
\begin{equation*}
  [a,b]_{i}=a\cdot\nabla b_{i}-b\cdot\nabla a_{i}.
\end{equation*}
Let us know collect here all possible commutators. Firstly,
\begin{equation*}
  [\tilde{e}_{i},\tilde{b}_{j}]=[\xi_i,\tilde{b}_{j}]=[\eta_i,\tilde{b}_{j}]=0,\qquad i,j=1,2,3.
\end{equation*}
Secondly, we the values of $[\hat{e}_{i},\tilde{b}_{j}]$ are gathered in the following table:

\begin{table}[h]
  \centering

  \begin{tabular}{c|c|c|c}
    $[\hat{e}_{i},\tilde{b}_{j}]$ & $\tilde{b}_{1}$ & $\tilde{b}_{2}$ & $\tilde{b}_{3}$\tabularnewline
    \hline 
    $\tilde{e}_{1}$ & $0$ & $-2\hat{e}_{3}$ & $2\hat{e}_{2}$\tabularnewline
    \hline 
    $\tilde{e}_{2}$ & $2\hat{e}_{3}$ & $0$ & $-2\hat{e}_{1}$\tabularnewline
    \hline 
    $\tilde{e}_{3}$ & $-2\hat{e}_{2}$ & $2\hat{e}_{1}$ & $0$\tabularnewline
  \end{tabular}
\end{table}
In order to re-write $(\ref{eq:term1_fuzzy})+(\ref{eq:term2_fuzzy})$, we first work on $I_1(e,b)+I_2(e,b)$.
\begin{lemma}\label{lem:fuzzy_commutator_computation}
  Let $b$ be one of the vector fields $\tilde b_j$ and $e\in \mathbb R^6\times \mathbb R^6$ a fixed vector. Then
  \begin{align*}
    I_{1}(e,b)+I_{2}(e,b) & 
    =-\int\left(e\cdot\nabla\left(b\cdot\nabla U\right)\right)^{2} r^\gamma \kappa P
    -\int\left([e,b]\cdot\nabla U\right)\left(e\cdot\nabla U\right)\left(b\cdot\nabla P\right) r^\gamma \kappa\\
    & +\int\left([e,b]\cdot\nabla\left(b\cdot\nabla U\right)\right)\left(e\cdot\nabla U\right) r^\gamma \kappa P
    +\int\left(e\cdot\nabla\left(b\cdot\nabla U\right)\right)\left([e,b]\cdot\nabla U\right) r^\gamma \kappa P.
  \end{align*}
\end{lemma}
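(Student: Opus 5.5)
The identity is purely algebraic: it uses only the commutator identity $a\cdot\nabla(c\cdot\nabla u)-c\cdot\nabla(a\cdot\nabla u)=[a,c]\cdot\nabla u$ and integration by parts along $b$. The plan is to compute with the shorthand
\[
V:=e\cdot\nabla U,\qquad W:=b\cdot\nabla U,\qquad w:=r^\gamma\kappa,
\]
and with the $\tau$-integral suppressed.

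First I collect three structural facts about $b=\tilde b_j$:
\begin{itemize}
\item $\textnormal{div}\,b=0$, since $\textnormal{div}\,b_j=0$ and the $v_*$ component of $\tilde b_j$ enters with the opposite sign.
\item $b\cdot\nabla r=0$, because $b_j(v-v_*)\perp v-v_*$; equivalently $\hat n\perp\tilde b_j$.
\item $b\cdot\nabla\kappa(x-x_*)=0$, because $\tilde b_j$ has no $x,x_*$ components.
\end{itemize}
Consequently $b\cdot\nabla w=0$, and for smooth, decaying $\phi,\psi$ we have the integration-by-parts rule $\int(b\cdot\nabla\phi)\,\psi\, wP=-\int\phi\,(b\cdot\nabla\psi)\,wP-\int\phi\,\psi\, w\,(b\cdot\nabla P)$. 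Since $e$ is a constant vector, all manipulations below are pointwise commutations, and the only integrations by parts are along $b$.

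Next I expand each term.
\begin{itemize}
\item For $I_2$, I use the product rule: $-\tfrac12\, b\cdot\nabla(b\cdot\nabla V^2)=-V\,b\cdot\nabla(b\cdot\nabla V)-(b\cdot\nabla V)^2$.
\item For $I_1$, I commute twice. First, $e\cdot\nabla(b\cdot\nabla W)=b\cdot\nabla(e\cdot\nabla W)+[e,b]\cdot\nabla W$. Second, $e\cdot\nabla W=b\cdot\nabla V+[e,b]\cdot\nabla U$. Substituting the second into the first gives $b\cdot\nabla(e\cdot\nabla W)=b\cdot\nabla(b\cdot\nabla V)+b\cdot\nabla([e,b]\cdot\nabla U)$.
\end{itemize}
Adding the two, the terms containing $V\,b\cdot\nabla(b\cdot\nabla V)$ cancel. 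What remains is
\[
I_1+I_2=\int V\,b\cdot\nabla([e,b]\cdot\nabla U)\,wP+\int([e,b]\cdot\nabla W)\,V\,wP-\int(b\cdot\nabla V)^2\,wP .
\]

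In the first remaining term I integrate by parts along $b$, using the rule above. This produces $-\int([e,b]\cdot\nabla U)(b\cdot\nabla V)\,wP-\int([e,b]\cdot\nabla U)\,V\,w\,(b\cdot\nabla P)$; the second of these is exactly the $b\cdot\nabla P$ term in the statement. Then everywhere I replace $b\cdot\nabla V$ by $e\cdot\nabla W-[e,b]\cdot\nabla U$ and expand $(b\cdot\nabla V)^2$. Collecting terms:
\begin{itemize}
\item the $([e,b]\cdot\nabla U)^2$ contributions cancel, since one $+$ comes from the integration by parts and one $-$ from the square;
\item the cross terms $(e\cdot\nabla W)([e,b]\cdot\nabla U)$ combine with net coefficient $+2-1=+1$;
\item the leftover terms are $-\int(e\cdot\nabla W)^2wP$ and $\int([e,b]\cdot\nabla W)V\,wP$.
\end{itemize}
These are precisely the four terms in the statement.

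The only delicate point is bookkeeping: the weight $w$ must never be differentiated. This holds because the only integration by parts is along $b$, which annihilates $w$ by the three facts above, while $e\cdot\nabla$ is only ever commuted past $b\cdot\nabla$ pointwise. Boundary terms are discarded formally, consistent with the formal nature of this section (smooth $U$ and decaying $P$).
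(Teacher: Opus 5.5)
Your proposal is correct and follows essentially the paper's route. Both arguments rest on the same two ingredients: commutator identities for the constant field $e$ against $b=\tilde b_j$, and integration by parts along $b$, justified by $\textnormal{div}(r^\gamma\kappa\, b)=0$. The difference is bookkeeping only. The paper integrates by parts once in each of $I_1$ and $I_2$ and cancels the two $(e\cdot\nabla(b\cdot\nabla U))(e\cdot\nabla U)(b\cdot\nabla P)$ terms. You instead expand $I_2$ by the product rule, cancel the third-order terms pointwise, and integrate by parts only once, after which the $([e,b]\cdot\nabla U)^2$ terms cancel.
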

\begin{proof}
  First rewrite $I_1$ by commuting $e,b$, integrating by parts, remembering that $b\cdot\nabla$ only acts on the $v,v_*$ variables and that $\textnormal{div}_{v,v_*}(r^\gamma \kappa b)=0$
  \begin{align*}
    I_{1}(e,b) & =-\int\left(e\cdot\nabla\left(b\cdot\nabla U\right)\right)^{2} r^\gamma  \kappa P
    -\int\left(e\cdot\nabla\left(b\cdot\nabla U\right)\right)\left(b\cdot\nabla P\right)\left(e\cdot\nabla U\right) r^\gamma  \kappa\\
    & +\int\left([e,b]\cdot\nabla\left(b\cdot\nabla U\right)\right)\left(e\cdot\nabla U\right) r^\gamma  \kappa P
    +\int\left(e\cdot\nabla\left(b\cdot\nabla U\right)\right)\left([e,b]\cdot\nabla U\right) r^\gamma  \kappa P.
  \end{align*}
  Operate similarly on $I_2$ to obtain
  \begin{align*}
    I_{2}(e,b) 
    & =\int\left(b\cdot\nabla\left(e\cdot\nabla U\right)\right)\left(e\cdot\nabla U\right)\left(b\cdot\nabla P\right) r^\gamma  \kappa\\
    & =\int\left(e\cdot\nabla\left(b\cdot\nabla U\right)\right)\left(e\cdot\nabla U\right)\left(b\cdot\nabla P\right) r^\gamma  \kappa
    -\int\left([e,b]\cdot\nabla U\right)\left(e\cdot\nabla U\right)\left(b\cdot\nabla P\right) r^\gamma  \kappa,
  \end{align*}
  and note the cancellation with $I_{1}(e,b)$ yielding the desired expression.
\end{proof}
This allows us to re-write $\frac{d}{dt}d_2^2(F,G)$ in a more convenient way:
\begin{lemma}\label{lem:expression_dissip}
  Given solutions of the lifted equation $\partial_tF = Q(F)$, $\partial_tG = Q(G)$, we have
  \begin{align}
    \frac{d}{dt}d_2^2(F,G) &\nonumber
    = -\sum_{j}\int\left|\nabla\left(\tilde{b}_{j}\cdot\nabla U\right)\right|^{2}r^{\gamma}\kappa P	\nonumber\\
    &+\gamma\sum_{j}\int\left(\hat{n}\cdot\nabla U\right)\left(\tilde{b}_{j}\cdot\nabla\left(\tilde{b}_{j}\cdot\nabla U\right)\right)r^{\gamma-2}\kappa P \label{eq:fuzzy_bad_term1} \\
  & +\sum_{j}\int\left(\nabla \kappa\cdot\nabla U\right)\left(\tilde{b}_{j}\cdot\nabla\left(\tilde{b}_{j}\cdot\nabla U\right)\right) r^\gamma P, 
  \label{eq:fuzzy_bad_term2}
  \end{align}
\end{lemma}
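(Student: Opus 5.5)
Starting from the expansion of $2\frac{d}{dt}d_2^2$ into \eqref{eq:term1_fuzzy}, \eqref{eq:term2_fuzzy} and the two remaining terms (those carrying $\nabla r^\gamma$ and $\nabla\kappa$), the last two already coincide with \eqref{eq:fuzzy_bad_term1} and \eqref{eq:fuzzy_bad_term2}. These come from $\nabla$ falling on the weight $r^\gamma\kappa$ when $Q$ is written as in \eqref{eq:expanded_Q_fuzzy}, with $\kappa$ multiplied back in. Everything therefore reduces to showing
\begin{equation*}
  (\ref{eq:term1_fuzzy})+(\ref{eq:term2_fuzzy}) = -\sum_j\int\left|\nabla\left(\tilde b_j\cdot\nabla U\right)\right|^2 r^\gamma\kappa P .
\end{equation*}
The normalization (the factor $2$ and the $\tfrac12$ in \cref{lem:lifting_fuzzy}) is then just bookkeeping, matched to the heat-equation proposition. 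I will use the orthonormal decomposition $\{\tilde e_i,\hat e_i,\xi_i,\eta_i\}$, which splits the sum into $\sum_{i,j}[I_1+I_2]$ over the twelve directions, and apply \cref{lem:fuzzy_commutator_computation} to each pair $(e,\tilde b_j)$.

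\textbf{Commuting directions.} For $e\in\{\tilde e_i,\xi_i,\eta_i\}$ we have $[e,\tilde b_j]=0$. \cref{lem:fuzzy_commutator_computation} then gives exactly $-\int (e\cdot\nabla(\tilde b_j\cdot\nabla U))^2 r^\gamma\kappa P$.

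\textbf{Directions $\hat e_i$.} Here $[\hat e_i,\tilde b_j]=\pm2\hat e_k$ or $0$, according to the table, and the three cross terms must cancel after summing over $i,j$. The table is antisymmetric: $[\hat e_i,\tilde b_j]=2\epsilon_{ijk}\hat e_k$ up to a global sign. I will therefore write each cross term as a sum of $\epsilon_{ijk}\times(\text{expression in }(i,k))$ and exploit antisymmetry under $i\leftrightarrow k$ with $j$ fixed.

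The second term of \cref{lem:fuzzy_commutator_computation} is $2\epsilon_{ijk}\int(\hat e_k\cdot\nabla U)(\hat e_i\cdot\nabla U)(\tilde b_j\cdot\nabla P)r^\gamma\kappa$. The integrand is symmetric in $(i,k)$, so the sum over $i,k$ vanishes.

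The third and fourth terms combine to
\begin{equation*}
  2\epsilon_{ijk}\int\Big[(\hat e_k\cdot\nabla(\tilde b_j\cdot\nabla U))(\hat e_i\cdot\nabla U)+(\hat e_i\cdot\nabla(\tilde b_j\cdot\nabla U))(\hat e_k\cdot\nabla U)\Big]r^\gamma\kappa P .
\end{equation*}
The bracket is symmetric under $i\leftrightarrow k$, so this sum also vanishes. Since $\hat e_i$ are constant vectors, no derivatives of the frame appear.

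\textbf{Conclusion and main obstacle.} Summing the squares over the orthonormal basis gives $\sum_e(e\cdot\nabla w)^2=|\nabla w|^2$ with $w=\tilde b_j\cdot\nabla U$, which is the claimed dissipation term. The main obstacle is the cancellation bookkeeping in the $\hat e_i$ directions: one has to check the sign convention of the table, and check that the integrations by parts in \cref{lem:fuzzy_commutator_computation} remain valid with the weight $\kappa(x-x_*)$, which requires $\tilde b_j\cdot\nabla\kappa=0$ and $\mathrm{div}(r^\gamma\kappa\tilde b_j)=0$. Both hold because $\tilde b_j$ has no $x$-components and $\tilde b_j\perp\hat n$. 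The whole computation is formal, assuming $U$ is smooth enough (three derivatives), as the paper acknowledges for this method.
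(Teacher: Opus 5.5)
Your proposal is correct and follows the paper's proof. You decompose \eqref{eq:term1_fuzzy}+\eqref{eq:term2_fuzzy} along the frame $\{\tilde e_i,\hat e_i,\xi_i,\eta_i\}$ and apply \cref{lem:fuzzy_commutator_computation} to each pair; the cross terms in the $\hat e_i$ directions then vanish because the commutator table $[\hat e_i,\tilde b_j]=-2\epsilon_{ijk}\hat e_k$ is antisymmetric while the integrands are symmetric under $i\leftrightarrow k$. Your Levi-Civita bookkeeping simply makes that symmetry argument explicit, and you correctly read the table's rows as $\hat e_i$ rather than the misprinted $\tilde e_i$.
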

\begin{proof}
  Using the decomposition of (\ref{eq:term1_fuzzy}), (\ref{eq:term2_fuzzy}) in terms of $I_1$, $I_2$ together with the fact that $[\hat e_i,\tilde b_j]$ are the only nonzero commutators yields
  \begin{align}
    (\ref{eq:term1_fuzzy})+(\ref{eq:term2_fuzzy}) & 
    =-\sum_{j}\int\left|\nabla\left(\tilde{b}_{j}\cdot\nabla U\right)\right|^{2}r^{\gamma}\kappa P \nonumber	\\
    & -\sum_{i,j}\int\left([\hat{e}_{i},\tilde{b}_{j}]\cdot\nabla U\right)\left(\hat{e}_{i}\cdot\nabla U\right)\left(\tilde{b}_{j}\cdot\nabla P\right)r^{\gamma}\kappa \label{eq:vanish1}\\
    & +\sum_{i,j}\int\left([\hat{e}_{i},\tilde{b}_{j}]\cdot\nabla\left(\tilde{b}_{j}\cdot\nabla U\right)\right)\left(\hat{e}_{i}\cdot\nabla U\right)r^{\gamma}\kappa P\label{eq:vanish2}\\
    & +\sum_{i,j}\int\left(\hat{e}_{i}\cdot\nabla\left(\tilde{b}_{j}\cdot\nabla U\right)\right)\left([\hat{e}_{i},\tilde{b}_{j}]\cdot\nabla U\right)r^{\gamma}\kappa P,\label{eq:vanish3}
  \end{align}
  Now observe that (\ref{eq:vanish1}) is symmetric in $\hat{e}_{i}$, $[\hat{e}_{i},\tilde{b}_{j}]$. Since the table of commutators is antisymmetric, we deduce that actually $(\ref{eq:vanish1})=0$. The same reasoning shows that $\ensuremath{(\ref{eq:vanish2})+(\ref{eq:vanish3})=0}$, proving the lemma.
\end{proof}
Next, we bound (\ref{eq:fuzzy_bad_term1}), (\ref{eq:fuzzy_bad_term2}) in terms of the Wasserstein distance and the negative term. Let us begin with the first term by recalling that $|\tilde b_j|\leq 2r$, so 
\begin{equation*}
  (\ref{eq:fuzzy_bad_term1}) \leq
  \frac{1}{2}\sum_{j}\int\left|\nabla\left(\tilde{b}_{j}\cdot\nabla U\right)\right|^{2}r^{\gamma}\kappa P
    +6\gamma^{2}\int\left|\hat{n}\cdot\nabla U\right|^{2}r^{\gamma-2}\kappa P
\end{equation*}
Now we use that $U = u\oplus u$, $P = \rho \otimes \rho$, which gives
\begin{equation*}
  \int\left|\hat{n}\cdot\nabla U\right|^{2}r^{\gamma-2}\kappa P \lesssim
  \int\left|\nabla_{v}u(x,v)\right|^{2}\rho(x,v)\Bigg(\underbrace{\int|v-v_*|^{\gamma}\rho(x_*,v_*)\,dx_*\,dv_*}_{=:(I_{\gamma}\rho)(x,v)}\Bigg)\,dx\,dv.
\end{equation*}
Thus, keeping in mind that $d_2^2(f,g)=\int_{\mathbb R^6}|\nabla u|^2\rho$, we have
\begin{equation*}
  (\ref{eq:fuzzy_bad_term1})\leq\frac{1}{2}\sum_{j}\int\left|\nabla\left(\tilde{b}_{j}\cdot\nabla U\right)\right|^{2}r^{\gamma}\kappa P+
  C\left\Vert I_{\gamma}\rho\right\Vert _{L^{\infty}} d_2^2(f,g).
\end{equation*}
Let us now turn our attention to (\ref{eq:fuzzy_bad_term2}). First we use Young's inequality
\begin{equation*}
  (\ref{eq:fuzzy_bad_term2}) \leq\frac{1}{2}\sum_{j}\int\left|\nabla\left(\tilde{b}_{j}\cdot\nabla U\right)\right|^{2}r^{\gamma}P
    +6\int\left|\nabla\kappa\cdot\nabla U\right|^{2}r^{\gamma+2}P.
\end{equation*}
Now introduce $u$, $\rho$ again to bound the last term
\begin{equation*}
  6\int\left|\nabla\kappa\cdot\nabla U\right|^{2}r^{\gamma+2}P\leq
  24\int\left|\nabla_{x}u(x,v)\right|^{2}\rho(x,v)\Bigg(\underbrace{\int|v-v_*|^{\gamma+2}\rho(x_*,v_*)\,dx_*\,dv_*}_{=(I_{\gamma+2}\rho)(x,v)}\Bigg)\rho(x,v)\,dx\,dv
\end{equation*}
Thus, we have
\begin{equation*}
  (\ref{eq:fuzzy_bad_term2}) \leq 
  \frac{1}{2}\sum_{j}\int\left|\nabla\left(\tilde{b}_{j}\cdot\nabla U\right)\right|^{2}r^{\gamma}\kappa P
  +C\left\Vert I_{\gamma+2}\rho\right\Vert _{L^{\infty}}d_2^2(f,g).
\end{equation*}
Putting it all together, we have
\begin{equation*}
  \frac{d}{dt}d_{2}^{2}(f,g)\le C\left(\left\Vert I_{\gamma}\rho\right\Vert _{L^{\infty}}+\left\Vert I_{\gamma+2}\rho\right\Vert _{L^{\infty}}\right)d_{2}^{2}(f,g).
\end{equation*}
Now we bound the singular integrals using Hölder's inequality with $p>\frac{3}{3+\gamma}$
\begin{equation*}
  \left\Vert I_{\gamma}\rho\right\Vert _{L^{\infty}}\leq 
    C_{p,\gamma}\left\Vert \rho\right\Vert _{L^{p}_vL^1_x}^{\alpha_{p,\gamma}},\qquad
    \left\Vert I_{\gamma+2}\rho\right\Vert_{L^{\infty}}\le C_{p,\gamma}\left\Vert \rho\right\Vert _{L^{p}_vL^1_x}^{\beta_{p,\gamma}},
\end{equation*}
where $0<\beta_{p,\gamma}=-\frac{(2+\gamma)p}{6(p-1)} < \alpha_{p,\gamma}=-\frac{\gamma p}{3(p-1)} <1$. Then \cref{thm:fuzzy_sym_method} follows immediately.

\subsection{Multiespecies Landau equation}\label{subsec:sym_multi}
In this section we derive the following result:
\begin{theorem}\label{thm:multi_sym_method}
  Let $(f^{(i)})_{i=1}^N$,$(g^{(i)})_{i=1}^N$ be weak solutions of the homogeneous multiespecies Landau equation for $-3<\gamma \leq 0$ with bounded second moments. Let $p>\frac{3}{3+\gamma}$ and assume that ${f^{(i)},g^{(i)}\in L^{\alpha_{p,\gamma}}_tL^p_{v^i}}$ where $\alpha_{p,\gamma}=-\frac{\gamma p}{3(p-1)}\in (0,1)$. Then we have
  \begin{equation*}
    \sum_{i=1}^{N}d_{2}^{2}(f_{t}^{(i)},g_{t}^{(i)})\leq\sum_{i=1}^{N}d_{2}^{2}(f_{0}^{(i)},g_{0}^{(i)})\exp\left\{ C\int_{0}^{t}\max_{1\leq i\leq N}(\Vert f_{t}^{(i)}\Vert_{L_{v^{i}}^{p}}^{\alpha_{p,\gamma}},\Vert g_{t}^{(i)}\Vert_{L_{v^{i}}^{p}}^{\alpha_{p,\gamma}},1)\,dt\right\},
  \end{equation*}
  where $C$ depends on $\gamma$, $N$, $\max_{ij}|c_{ij}|$, $\max_{i}|m_{i}|^{-1}$ and the second moments of $(f^{(i)})_{i=1}^{N}$, $(g^{(i)})_{i=1}^{N}$.
\end{theorem}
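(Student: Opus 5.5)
We follow the symmetrization scheme of Section \ref{subsec:sym_fuzzy}, now working with the vector of species. We use \eqref{eq:HJ_formulation2} separately for each pair $(f^{(i)},g^{(i)})$. Let $u^{(i)}_s$ be an optimal Hamilton--Jacobi potential and $\rho^{(i)}_s$ the corresponding geodesic. Differentiating the sum gives
\begin{equation*}
  \frac{d}{dt}\sum_i d_2^2(f^{(i)},g^{(i)}) = \sum_{i,j}\left(\int u^{(i)}_1\, q_{ij}(g^{(i)},g^{(j)}) - \int u^{(i)}_0\, q_{ij}(f^{(i)},f^{(j)})\right).
\end{equation*}
For each ordered pair $(i,j)$ we lift to $\mathbb R^3\times\mathbb R^3$ with the tensor $F_{ij}=f^{(i)}\otimes f^{(j)}$, $G_{ij}=g^{(i)}\otimes g^{(j)}$. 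The lifted operator is
\begin{equation*}
  Q_{ij}(F)=c_{ij}\Big(\tfrac{\nabla_{v}}{m_i}-\tfrac{\nabla_{v_*}}{m_j}\Big)\cdot\Big[\Phi(v-v_*)\Big(\tfrac{\nabla_{v}}{m_i}-\tfrac{\nabla_{v_*}}{m_j}\Big)F\Big],
\end{equation*}
so that $q_{ij}(f,g)=\int Q_{ij}(f\otimes g)\,dv_*$. In the $(i,j)$ term the dual potential is $U_{ij}=u^{(i)}\oplus u^{(j)}$ and the geodesic is $P_{ij}=\rho^{(i)}\otimes\rho^{(j)}$. The $(i,j)$ term is no longer symmetric on its own, so we pair it with the $(j,i)$ term, using $c_{ij}=c_{ji}$. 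The pair then equals the derivative at $\tau=0$ of $d_2^2$ along $Q_{ij}$, which is a lifting lemma analogous to \cref{lem:lifting_fuzzy}.

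Next we rewrite $Q_{ij}$ in Hörmander form, as in \cref{eq:expanded_Q_fuzzy}, with the vector fields
\begin{equation*}
  \tilde b_k=\big(b_k(v-v_*)/m_i,\,-b_k(v-v_*)/m_j\big).
\end{equation*}
These are divergence free in $(v,v_*)$ and orthogonal to $\nabla r^\gamma$, where $r=|v-v_*|$; the latter holds because $b_k(z)\perp z$. Integrating by parts along the geodesic reproduces the Bochner-type identity of \cref{lem:fuzzy_commutator_computation}, i.e. $I_1+I_2$ for every fixed direction $e$. The new point is the choice of an orthonormal basis adapted to the unequal masses. We take the sum direction $(e_k/m_j, e_k/m_i)$, normalized: it commutes with $\tilde b_k$, because $b_k$ depends only on $v-v_*$ and is annihilated by the derivative $\frac1{m_j}\partial_v+\frac1{m_i}\partial_{v_*}$ only if $\frac1{m_j}=\frac1{m_i}$. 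This fails when the masses differ, so instead we use $\partial_v+\partial_{v_*}$, which kills functions of $v-v_*$. The complementary direction is the normalized $(e_k/m_i,-e_k/m_j)$. The Lie brackets of this second family with the $\tilde b_k$ are again multiples of $\hat e$-type vectors arranged in an antisymmetric table, but the two families are not orthogonal when $m_i\neq m_j$.

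\textbf{Main obstacle.} Because the adapted directions are not orthogonal, the cancellations of \cref{lem:expression_dissip} may leave remainder commutator terms. My plan is to bound these remainders by Young's inequality against the negative dissipation $-\sum_k\int|\nabla(\tilde b_k\cdot\nabla U)|^2r^{\gamma}P$ plus terms of the form $\int|\nabla U|^2 r^{\gamma}P$. Such terms are harmless: using $U=u^{(i)}\oplus u^{(j)}$ and $P=\rho^{(i)}\otimes\rho^{(j)}$, they are bounded by
\begin{equation*}
  \|I_\gamma\rho^{(j)}\|_{L^\infty}\int|\nabla u^{(i)}|^2\rho^{(i)}+\|I_\gamma\rho^{(i)}\|_{L^\infty}\int|\nabla u^{(j)}|^2\rho^{(j)}.
\end{equation*}
The remaining bad term, the $\gamma\,\hat n$-term as in \eqref{eq:fuzzy_bad_term1}, is treated exactly as in the fuzzy case: Young's inequality with $|\tilde b_k|\lesssim r\max_i m_i^{-1}$. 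There is no $\nabla\kappa$ term here.

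Summing over $(i,j)$ gives
\begin{equation*}
  \frac{d}{dt}\sum_i d_2^2\le C\max_{i,s}\|I_\gamma\rho^{(i)}_s\|_{L^\infty}\sum_i d_2^2(f^{(i)},g^{(i)}),
\end{equation*}
with $C$ depending on $N$, $\max|c_{ij}|$ and $\max m_i^{-1}$. Hölder's inequality with $p>\frac3{3+\gamma}$ gives $\|I_\gamma\rho\|_\infty\lesssim\|\rho\|_{L^p}^{\alpha_{p,\gamma}}+1$, where the $+1$ absorbs the region $r>1$ via unit mass. Displacement convexity of $\|\cdot\|_{L^p}$ along geodesics bounds $\|\rho_s\|_{L^p}$ by $\max(\|f\|_{L^p},\|g\|_{L^p})$. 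Gronwall's inequality then concludes. If the non-orthogonality remainders cannot be absorbed, the fallback is to use the mass-weighted metric $\sum_i m_i\,d_2^2$ throughout, which makes the adapted vector fields orthonormal, and then compare it with the unweighted sum at the cost of constants depending on the $m_i$.
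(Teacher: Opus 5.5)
Your proposal is correct and is essentially the paper's argument. Your pairwise lifts to $\mathbb{R}^3\times\mathbb{R}^3$, paired via $c_{ij}=c_{ji}$, are equivalent to the paper's single lift to $(\mathbb{R}^3)^{2N}$, because each $Q_{ij}$ only acts on $(v^i,v^j_*)$ and $P$ is a tensor product; and absorbing the mass-asymmetric commutator remainders into the dissipation by Young's inequality is exactly Lemma~\ref{lem:commutators}, which works in the canonical orthonormal basis and so avoids the Gram-matrix bookkeeping your non-orthogonal frame would need. Two of your side remarks go slightly beyond the paper: displacement convexity of $\int\rho^p$ is what justifies replacing $\Vert\rho^{(i)}_s\Vert_{L^p}$ by $\max(\Vert f^{(i)}\Vert_{L^p},\Vert g^{(i)}\Vert_{L^p})$, a step the paper leaves implicit, and your fallback works cleanly, since in the mass-weighted metric the frames $(e_q,e_q)$ and $(e_q/m_i,-e_q/m_j)$ are orthogonal and all brackets cancel as in the fuzzy case, at the cost of a prefactor $\max_i m_i/\min_i m_i$.
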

Just like \cref{thm:fuzzy_sym_method}, this can be made fully rigorous using the techniques from Section \ref{sec:gen_thm}. The first step for the derivation is to introduce the lifted operator, which acts on functions of $(\mathbb R^3)^{2N}$:
\begin{equation*}
  Q_{ij}(F):=c_{ij}\left(\frac{\textnormal{div}_{v^{i}}}{m_{i}}-\frac{\textnormal{div}_{v^{j}_*}}{m_{j}}\right)\left[\Phi(v^{i}-v^{j}_*)\left(\frac{\nabla_{v^{i}}}{m_{i}}-\frac{\nabla_{v^{j}_*}}{m_{j}}\right)F\right]
  ,\quad Q(F)=\sum_{i,j = 1}^NQ_{ij}(F),
\end{equation*}
which satisfies
\begin{equation*}
  \sum_{j=1}^{N}q_{ij}(f^{(i)},f^{(j)})=\int_{\left(\mathbb{R}^{3}\right)^{2N-1}}Q(F)\,dv^{-i},
\end{equation*}
where $dv^{-i}$ denotes integration with respect to all variables except $v^i$ and
\begin{equation*}
  F(v^{1},\ldots,v^{N},v_*^{1},\ldots,v_*^{N})=\bigotimes_{i=1}^N(f^{(i)}\otimes f^{(i)}):=\prod_{i=1}^{N}f^{(i)}(v^{i})f^{(i)}(v_*^{i}).
\end{equation*}
Moreover, we have following important symmetry:
\begin{equation}\label{eq:sym_Q_multi}
  Q(F)(v_*^{1},\ldots,v_*^{N},v^{1},\ldots,v^{N})=Q(F)(v^{1},\ldots,v^{N},v_*^{1},\ldots,v_*^{N}).
\end{equation}
This allows us to prove a similar lifting property.
\begin{lemma}
  Let $\left(f^{(i)}\right)_{i=1}^{N}$, $\left(g^{(i)}\right)_{i=1}^{N}$ be two solutions of the multiespecies Landau equation. Then we have
  \begin{equation*}
    \frac{d}{dt}\sum_{i=1}^Nd_2^2(f^{(i)}_t,g^{(i)}_t)=\frac{1}{2}\left.\frac{d}{d\tau}\right|_{\tau=0}d_{2}^{2}(\tilde{F}_\tau,\tilde{G}_\tau),
  \end{equation*}
  where $\tilde F$, $\tilde G$ are solutions in $(\mathbb R^3)^{2N}$ of 
  \begin{equation*}
    \begin{cases}
      \partial_{\tau}\tilde{F}=Q(\tilde{F})\\
      \tilde{F}(\tau=0)=F_t
    \end{cases},\begin{cases}
      \partial_{\tau}\tilde{G}=Q(\tilde{G})\\
      \tilde{G}(\tau=0)=G_t
    \end{cases},
  \end{equation*}
  and $F_t=\bigotimes_{i=1}^N(f^{(i)}_t\otimes f^{(i)}_t)$, $G_t=\bigotimes_{i=1}^N(g^{(i)}_t\otimes g^{(i)}_t)$.
\end{lemma}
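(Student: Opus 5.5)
The plan is to mimic the proof of \cref{lem:lifting_fuzzy}, using the Hamilton--Jacobi formulation \cref{eq:HJ_formulation2} for each species. For each $i$ let $u^{(i)}_s$ be an optimal potential for $d_2^2(f^{(i)}_t,g^{(i)}_t)$, with geodesic $\rho^{(i)}_s$. Differentiating the dual formulation in $t$ (formally, via the envelope theorem) gives
\begin{equation*}
  \frac{d}{dt}\sum_{i=1}^N d_2^2(f^{(i)}_t,g^{(i)}_t)=\sum_{i=1}^N\left[\int u^{(i)}_1\sum_{j}q_{ij}(g^{(i)},g^{(j)})\,dv^i-\int u^{(i)}_0\sum_j q_{ij}(f^{(i)},f^{(j)})\,dv^i\right].
\end{equation*}

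The first step is to identify the optimal potential for the lifted problem. The measures $F_t,G_t$ are tensor products of $2N$ factors on $\mathbb R^3$, and the cost $|z-z'|^2$ splits additively over the $2N$ coordinate blocks. Hence the optimal plan between $F_t$ and $G_t$ is the product of the optimal plans of the factors, and
\begin{equation*}
  d_2^2(F_t,G_t)=2\sum_{i=1}^N d_2^2(f^{(i)}_t,g^{(i)}_t).
\end{equation*}
The corresponding optimal Hamilton--Jacobi potential is
\begin{equation*}
  U_s(v^1,\dots,v^N,v^1_*,\dots,v^N_*)=\sum_{i=1}^N\big(u^{(i)}_s(v^i)+u^{(i)}_s(v^i_*)\big).
\end{equation*}
This $U$ solves the Hamilton--Jacobi equation, because $|\nabla U|^2$ splits as a sum over blocks. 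It is also optimal, since it attains the additive value above, and the dual value can never exceed $d_2^2(F_t,G_t)$.

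The second step is to differentiate $d_2^2(\tilde F_\tau,\tilde G_\tau)$ at $\tau=0$ with the same envelope argument:
\begin{equation*}
  \left.\frac{d}{d\tau}\right|_{\tau=0}d_2^2(\tilde F_\tau,\tilde G_\tau)=\int U_1\,Q(G_t)-\int U_0\,Q(F_t).
\end{equation*}
Next split $U$ into its "unstarred" part $\sum_i u^{(i)}(v^i)$ and its "starred" part $\sum_i u^{(i)}(v^i_*)$. The symmetry \cref{eq:sym_Q_multi} is invariant under the swap $v\leftrightarrow v_*$, and so are $F_t$ and $G_t$. Therefore the starred part contributes exactly as much as the unstarred part, giving a factor $2$. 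For the unstarred part, integrate $u^{(i)}(v^i)$ against $Q(F_t)$. Integrating out all variables except $v^i$ and using the projection identity $\sum_j q_{ij}(f^{(i)},f^{(j)})=\int Q(F)\,dv^{-i}$ recovers the $i$-th term of the first display. Summing over $i$ yields the claim, including the factor $\tfrac12$.

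The main obstacle is checking the projection identity, and its compatibility with the symmetry, carefully. In $Q(F)=\sum_{i,j}Q_{ij}(F)$, integrating $Q_{kj}$ against a function of $v^i$ alone gives zero unless $k=i$, because a $v^k$-divergence integrates to zero. The $\mathrm{div}_{v^j_*}$ terms also vanish after integration against functions of $v^i$.

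There is one subtle point: $Q_{ij}$ involves only the pair $(v^i,v^j_*)$. The symmetry \cref{eq:sym_Q_multi} therefore requires $c_{ij}=c_{ji}$ together with the swapped pairing $(v^j,v^i_*)$ appearing through $Q_{ji}$. I would verify that after the swap, the cross-divergence term $\mathrm{div}_{v^j_*}$ of $Q_{ij}$ paired against $u^{(j)}(v^j_*)$ reproduces the contribution $q_{ji}$ for species $j$. If instead the bookkeeping works more cleanly by pairing the $v_*$-terms directly, I would compute $\int u^{(j)}(v^j_*)Q_{ij}(F)$ explicitly and add it to the $v^i$ contributions.

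Finally, the rigorous justification of the envelope-theorem differentiation is deferred, as in the rest of the section, to the coupling techniques of Section \ref{sec:gen_thm}.
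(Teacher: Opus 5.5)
Your proposal is correct and follows the paper's proof. Both differentiate the two dual formulations, use the additive decomposition $U_s=\sum_i\big(u^{(i)}_s(v^i)+u^{(i)}_s(v^i_*)\big)$ of the optimal lifted potential together with the projection identity $\sum_j q_{ij}(f^{(i)},f^{(j)})=\int Q(F)\,dv^{-i}$, and invoke the swap symmetry \cref{eq:sym_Q_multi} to produce the factor $\tfrac12$. Your side remark is also right: the paper states \cref{eq:sym_Q_multi} without proof, and it holds for swap-symmetric $F$ precisely because $c_{ij}=c_{ji}$ (true for the physical constants), in which case your direct computation of $\int u^{(j)}(v^j_*)\,Q_{ij}(F)$ does reproduce $\int u^{(j)}\,q_{ji}(f^{(j)},f^{(i)})$.
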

\begin{proof}
  We differentiate the lifted equation using formulation \cref{eq:HJ_formulation2}:
  \begin{equation*}
    \left.\frac{d}{d\tau}\right|_{\tau=0}d_{2}^{2}(\tilde{F},\tilde{G}) = 
    \int_{\left(\mathbb{R}^{3}\right)^{2N}}U(1)Q(G)-\int_{\left(\mathbb{R}^{3}\right)^{2N}}U(0)Q(F).
  \end{equation*}
  It is not hard to see that when $F$, $G$ are tensor products, the optimal $U$ decomposes as
  \begin{equation}\label{eq:U_multiesp_decomp}
    U_s(v_{1},\ldots,v_{N},w_{1},\ldots,w_{N}) = \bigoplus_{i=1}^N(u^{(i)}_s\oplus u^{(i)}_s):=
    \sum_{i=1}^{N}\left(u^{(i)}_s(v^{i})+u^{(i)}_s(w^{i})\right),
  \end{equation}
  where each $u^{(i)}$ solves the same Hamilton-Jacobi equation in $\mathbb R^3$ and is optimal for $d_2^2(f^{(i)},g^{(i)})$. Now we simply compute
  \begin{align*}
    \frac{d}{dt}\sum_{i=1}^Nd_2^2(f^{(i)},g^{(i)}) &
    =\sum_{i=1}^{N}\left[\int_{\mathbb{R}^{3}}u^{(i)}_1(v^i)\sum_{j=1}^{N}q_{ij}(g^{(i)},g^{(j)})\,dv^{i}
    -\int_{\mathbb{R}^{3}}u^{(i)}_0(v^i)\sum_{j=1}^{N}q_{ij}(f^{(i)},f^{(j)})\,dv^{i}\right]	\\
    & =\sum_{i=1}^{N}\left[\int_{\left(\mathbb{R}^{3}\right)^{2N}}u^{(i)}_1(v_{i})Q(G)-\int_{\left(\mathbb{R}^{3}\right)^{2N}}u^{(i)}_0(v^{i})Q(F)\right]\\
    & =\frac{1}{2}\left[\int_{\left(\mathbb{R}^{3}\right)^{2N}}U_1Q(G)-\int_{\left(\mathbb{R}^{3}\right)^{2N}}U_0Q(F)\right],
  \end{align*}
  where in the last step we have used the symmetry property \cref{eq:sym_Q_multi}, which finishes the proof.
\end{proof}
This reduces the problem to estimating the growth of the $2$-Wasserstein distance for a linear equation. Then, integrating by parts we may re-write
\begin{equation}\label{eq:expr_before_bk}
  2\frac{d}{dt}\sum_{i=1}^Nd_2^2(f^{(i)},g^{(i)}) =\int_{0}^{1}\int_{(\mathbb R^3)^{2N}}\left[\nabla U\cdot\nabla Q(U)-\frac{1}{2}Q\left(|\nabla U|^{2}\right)\right]P.
\end{equation}
From here on, to simplify notation, we write $\int_{0}^{1}\int_{(\mathbb R^3)^{2N}}$ as a single integral. Next, we express the operator $Q$ in terms of vector fields in $(\mathbb R^3)^{2N}$. For this purpose we introduce the notation $r_{ij}=|v^{i}-v_*^{j}|$ and the $\left(\mathbb{R}^{3}\right)^{2N}$ vector $b_{k}^{i,j}=b_{k}^{i,j}(v^{i}-v_*^{j})$ which contains all $0$ except a $\frac{1}{m_{i}}b_{k}(v^{i}-v_*^{j})$ in position $v^{i}$ and a $-\frac{1}{m_{j}}b_{k}(v^{i}-v_*^{j})$ in position $v_*^{j}$ (see \cref{eq:defn_bj} for the definition of $b_k$):
\begin{equation*}
  b_{k}^{i,j}:= \Big( 0 , \ldots,  0, \overbrace{\frac{1}{m_{i}}b_{k}(v^{i}-v_*^{j})}^i, 
  0,  \ldots, 0, \overbrace{\frac{1}{m_{i}}b_{k}(v^{i}-w_*^{j})}^{N+j},  0, \ldots, 0 \Big)
\end{equation*}
We will also use that $\nabla r_{ij}^{2}=2n^{i,j}$, where 
\begin{equation*}
  n^{i,j}:= \Big( 0 , \ldots,  0, \overbrace{v^i-v_*^j}^i, 
  0,  \ldots, 0, \overbrace{-v^i+v_*^j}^{N+j},  0, \ldots, 0 \Big).
\end{equation*}
These vectors verify $n^{i,j}\cdot b_{k}^{i,j}=0$, and $\textnormal{div}\left(r_{ij}^\gamma b_{k}^{i,j}\right)=0$, which allows us to re-write $Q$ as
\begin{equation*}
  Q(H)=\sum_{i,j=1}^{N}c_{ij}r_{ij}^{\gamma}\sum_{k=1}^{3}\left(b_{k}^{i,j}\cdot\nabla\left(b_{k}^{i,j}\cdot\nabla H\right)\right).
\end{equation*}
Plugging this into (\ref{eq:expr_before_bk}) yields
\begin{align}
  2\frac{d}{dt}\sum_{i=1}^Nd_2^2(f^{(i)},g^{(i)}) & 
  =\sum_{i,j,k}c_{ij}\int\nabla\left(b_{k}^{i,j}\cdot\nabla\left(b_{k}^{i,j}\cdot\nabla U\right)\right)\cdot\nabla Ur_{ij}^{\gamma}P
  \label{eq:analog_heat_multiespecies1}\\
  & -\frac{1}{2}\sum_{i,j,k}c_{ij}\int\left(b_{k}^{i,j}\cdot\nabla\left(b_{k}^{i,j}\cdot\nabla|\nabla U|^{2}\right)\right)r_{ij}^{\gamma}P	
  \label{eq:analog_heat_multiespecies2}\\
  & +\frac{\gamma}{2}\sum_{i,j,k}c_{ij}\int\left(n^{i,j}\cdot\nabla U\right)\left(b_{k}^{i,j}\cdot\nabla\left(b_{k}^{i,j}\cdot\nabla U\right)\right)\,r_{ij}^{\gamma-2}P.\nonumber
\end{align}
Note that $(\ref{eq:analog_heat_multiespecies1})+(\ref{eq:analog_heat_multiespecies2})$ is analogous to the expression creating the dissipation for the heat equation in the introduction. We show that it contains a term with a negative sign by introducing the following basis of $(\mathbb R^3)^{2N}$:
\begin{equation*}
  \left\{ e_{l,q}\right\} _{1\leq l\leq N,1\leq q\leq3},\qquad\left\{ \xi_{l,q}\right\} _{1\leq l\leq N,1\leq q\leq3}.
\end{equation*}
Here $\{e_{l,q}\}_{q=1}^3$ (resp. $\{\xi_{l,q}\}_{q=1}^3$) is the canonical basis of $\mathbb{R}^3$ corresponding to the variable $v^i$ (resp. $v_*^i$).
The subsequent computation requires the commutators $[e_{l,q},b_{k}^{i,j}]$, $[\xi_{l,q},b_{k}^{i,j}]$, which we collect in the following lemma. The proof is an elementary (and rather tedious) computation.
\begin{lemma}
  The following commutators all banish:
  \begin{equation*}
    [e_{l,q},b_{k}^{i,j}]=0\textnormal{ if }l\neq i,\qquad[\xi_{l,q},b_{k}^{i,j}]=0\textnormal{ if }l\neq j.
  \end{equation*}
  In addition, $[e_{i,q},b_{k}^{i,j}]=-[\xi_{j,q},b_{k}^{i,j}]$ so we only require $[e_{i,q},b_{k}^{i,j}]$, which are displayed here:
  \renewcommand{\arraystretch}{3.5}
  \setlength{\extrarowheight}{-3pt}
  \begin{table}[H]
    \centering%
    \begin{tabular}{c|c|c|c} 
      $[e_{i,q},b_{k}^{i,j}]$ & $b_{1}^{i,j}$ & $b_{2}^{i,j}$ & $b_{3}^{i,j}$\tabularnewline
      \hline 
      $e_{i,1}$ & $0$ & ${\displaystyle -\frac{e_{i,3}}{m_{i}}+\frac{\xi_{j,3}}{m_{j}}}$ & $\displaystyle\frac{e_{i,2}}{m_{i}}-\frac{\xi_{j,2}}{m_{j}}$\tabularnewline
      \hline 
      $e_{i,2}$ & ${\displaystyle \frac{e_{i,3}}{m_{i}}-\frac{\xi_{j,3}}{m_{j}}}$ & $0$ & ${\displaystyle -\frac{e_{i,1}}{m_{i}}+\frac{\xi_{j,1}}{m_{j}}}$\tabularnewline
      \hline 
      $e_{i,3}$ & ${\displaystyle -\frac{e_{i,2}}{m_{i}}+\frac{\xi_{j,2}}{m_{j}}}$ & ${\displaystyle \frac{e_{i,1}}{m_{i}}-\frac{\xi_{j,1}}{m_{j}}}$ & $0$\tabularnewline
    \end{tabular}
  \end{table}
\end{lemma}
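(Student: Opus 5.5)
The computation rests on the commutator formula $[a,b]_\ell = a\cdot\nabla b_\ell - b\cdot\nabla a_\ell$. Since $e_{l,q}$ and $\xi_{l,q}$ are constant vector fields, the second term drops out, so
\begin{equation*}
  [e_{l,q},b_k^{i,j}] = \partial_{v^l_q} b_k^{i,j},\qquad [\xi_{l,q},b_k^{i,j}] = \partial_{v^l_{*,q}} b_k^{i,j}.
\end{equation*}
The plan is therefore to differentiate the field $b_k^{i,j}$ directly.

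\textbf{Vanishing commutators.} The field $b_k^{i,j}$ depends on the variables only through $z:=v^i-v^j_*$. If $l\neq i$, the field does not depend on $v^l$, so $\partial_{v^l_q}b_k^{i,j}=0$. The same reasoning with $l\neq j$ shows that the field does not depend on $v^l_*$.

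\textbf{The antisymmetry relation.} By the chain rule,
\begin{equation*}
  \partial_{v^i_q} = \partial_{z_q},\qquad \partial_{v^j_{*,q}} = -\partial_{z_q}
\end{equation*}
when acting on functions of $z$. Hence $[e_{i,q},b_k^{i,j}] = -[\xi_{j,q},b_k^{i,j}]$.

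\textbf{The table.} Each $b_k$ in \cref{eq:defn_bj} is linear in $z$. Reading off the components:
\begin{itemize}
  \item $b_1(z)=(0,-z_3,z_2)$,
  \item $b_2(z)=(z_3,0,-z_1)$,
  \item $b_3(z)=(-z_2,z_1,0)$.
\end{itemize}
So $\partial_{z_q}b_k$ is a constant vector of the form $\pm e_m$ or $0$. For example:
\begin{itemize}
  \item $\partial_{z_1}b_2=-e_3$ and $\partial_{z_1}b_3=e_2$;
  \item $\partial_{z_2}b_1=e_3$ and $\partial_{z_2}b_3=-e_1$;
  \item $\partial_{z_3}b_1=-e_2$ and $\partial_{z_3}b_2=e_1$;
  \item $\partial_{z_q}b_q=0$.
\end{itemize}
The vector $b_k^{i,j}$ places $\frac{1}{m_i}b_k(z)$ in slot $v^i$ and $-\frac{1}{m_j}b_k(z)$ in slot $v^j_*$. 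A constant vector $e_m$ in slot $v^i$ is $e_{i,m}$, and in slot $v^j_*$ it is $\xi_{j,m}$. Therefore
\begin{equation*}
  \partial_{z_q} b_k^{i,j} = \frac{1}{m_i}(\partial_{z_q}b_k)\big|_{e_{i,\cdot}} - \frac{1}{m_j}(\partial_{z_q}b_k)\big|_{\xi_{j,\cdot}}.
\end{equation*}
For instance, this gives $[e_{i,1},b_2^{i,j}] = -\frac{e_{i,3}}{m_i}+\frac{\xi_{j,3}}{m_j}$. The remaining entries follow in the same way.

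\textbf{Main obstacle.} The only delicate point is sign bookkeeping. The displayed definition of $b_k^{i,j}$ contains typos: a $\frac{1}{m_i}$ and a $w_*^j$ in slot $N+j$. I would follow the prose definition, which puts $-\frac{1}{m_j}b_k$ there; the antisymmetric structure of the resulting table confirms this choice.
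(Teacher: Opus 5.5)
Your proposal is correct and is exactly the direct computation the paper alludes to (the paper gives no details beyond calling it elementary): since $e_{l,q},\xi_{l,q}$ are constant, each commutator reduces to a directional derivative of $b_k^{i,j}$, and all six nonzero entries you obtain, via $\partial_{z_q}b_k$ placed with weight $\frac{1}{m_i}$ in the $v^i$ slot and $-\frac{1}{m_j}$ in the $v_*^j$ slot, match the table. Your reading of $b_k^{i,j}$ is the right one, since it agrees with the prose and with $Q_{ij}$: the displayed $\frac{1}{m_i}$ and $w_*^j$ are typos, and with them the $\xi_{j,\cdot}$ entries of the table would not come out.
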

Now we have the following crucial lemma:
\begin{lemma}\label{lem:commutators}
  Consider fixed $i,j,k$ then we have
  \begin{equation*}
    (\ref{eq:analog_heat_multiespecies1})+(\ref{eq:analog_heat_multiespecies2}) \leq 
    -\frac{c_{ij}}{2} \int\left|\nabla\left(b_{k}^{i,j}\cdot\nabla U\right)\right|^{2}r_{ij}^{\gamma}P + C\max_{1\leq i\leq N}\left\Vert I_{\gamma}\rho_{i}\right\Vert _{L^{\infty}}\sum_{i=1}^Nd_2^2(f^{(i)},g^{(i)}),
  \end{equation*}
  where $C$ only depends on $\min_i m_i$, $\max_{i,j}c_{i,j}$ and $I_\gamma$ denotes the following singular integral
  \begin{equation}\label{eq:singular_integral}
    (I_{\gamma}\rho)(v):=\int_{\mathbb R^3}\left|v-v_*\right|^{\gamma}\rho(v_*)\,dv_*.
  \end{equation}
\end{lemma}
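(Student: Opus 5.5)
The plan is to mirror the fuzzy Landau computation of \cref{lem:fuzzy_commutator_computation,lem:expression_dissip}, now with the vector fields $b=b^{i,j}_k$ and weight $c_{ij}r_{ij}^\gamma$. We fix $i,j,k$ and expand $\nabla U$ and $|\nabla U|^2$ in the orthonormal basis $\{e_{l,q}\}\cup\{\xi_{l,q}\}$. This splits the $(i,j,k)$ contributions to (\ref{eq:analog_heat_multiespecies1}) and (\ref{eq:analog_heat_multiespecies2}) into sums of $I_1(e,b)+I_2(e,b)$, with $e$ ranging over the basis.

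The identity of \cref{lem:fuzzy_commutator_computation} holds verbatim, with weight $c_{ij}r_{ij}^\gamma$ in place of $r^\gamma\kappa$. Its proof only uses $\textnormal{div}(r_{ij}^\gamma b^{i,j}_k)=0$ and the commutator identity. Summing over $e$ in the basis gives the good term
\begin{equation*}
-c_{ij}\int|\nabla(b^{i,j}_k\cdot\nabla U)|^2 r_{ij}^\gamma P,
\end{equation*}
plus three commutator remainders. By the commutator lemma, the only nonzero brackets are $[e_{i,q},b^{i,j}_k]$ and $[\xi_{j,q},b^{i,j}_k]=-[e_{i,q},b^{i,j}_k]$, so every remainder is built from constant vectors of the form $\frac{e_{i,q'}}{m_i}-\frac{\xi_{j,q'}}{m_j}$.

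In the fuzzy case the remainders cancelled exactly, because the table was antisymmetric in a single orthonormal family $\hat e_i$. Here the brackets are not elements of the basis, so I will not expect exact cancellation. Write $w_q:=\frac{e_{i,q}}{m_i}-\frac{\xi_{j,q}}{m_j}$, and note that $e_{i,q}-\xi_{j,q}$ differs from $m_i w_q$ or $m_j w_q$ only by rescaling. The cross terms pairing $e_{i,q}$ with $w_{q'}$ and $\xi_{j,q}$ with $-w_{q'}$ then combine into expressions of the form $\int (\epsilon_{qq'}\, w_{q'}\cdot\nabla\,\cdot)(\,\cdot\,)$, and the antisymmetric part over $(q,q')$ cancels just as in \cref{lem:expression_dissip}. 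What survives is a symmetric part proportional to mass differences. I would estimate it rather than cancel it.

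The term containing $b\cdot\nabla P$ is integrated by parts back onto $U$, using $\textnormal{div}(r^\gamma b)=0$ and the fact that $[w,b]$ is again a constant combination. This yields terms of the type $\int (w\cdot\nabla(b\cdot\nabla U))(w'\cdot\nabla U)r^\gamma P$ and $\int (w\cdot\nabla U)(w'\cdot\nabla U)(\cdot)$. Each is handled by Young's inequality, absorbing half of the good term, so that $\frac{c_{ij}}{2}$ remains. The leftover is bounded by $C\int |\nabla U|^2$-type quantities weighted by $r_{ij}^\gamma|b|^2/r^2\lesssim r_{ij}^\gamma$, since $|b_k^{i,j}|\lesssim r_{ij}$ because the commutator vectors are constant.

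Finally we use the decomposition \cref{eq:U_multiesp_decomp}, namely $U=\bigoplus(u^{(l)}\oplus u^{(l)})$ and $P=\bigotimes(\rho_l\otimes\rho_l)$. Integrating out every variable except $v^i$ (respectively $v_*^j$) turns $\int|\nabla_{v^i}u^{(i)}|^2 r_{ij}^\gamma P$ into $\int|\nabla u^{(i)}|^2\rho_i\,(I_\gamma\rho_j)\le\|I_\gamma\rho_j\|_{L^\infty}d_2^2(f^{(i)},g^{(i)})$. Here $I_\gamma$ is the singular integral (\ref{eq:singular_integral}), and we use $\int_0^1\!\int|\nabla u^{(i)}|^2\rho_i=d_2^2$. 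The constant depends only on $\min m_i$ and $\max c_{ij}$.

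The main obstacle is the bookkeeping of the commutator remainders. Because the $w_q$ are non-orthonormal and carry different masses, the exact antisymmetric cancellation of \cref{lem:expression_dissip} fails. I need to check that every uncancelled term carries at most one derivative of $b\cdot\nabla U$, so that it can be absorbed, and does not carry a bare second derivative of $U$ outside the good term. If some term has a second derivative like $w\cdot\nabla(w'\cdot\nabla U)$ without a $b$, I would first try to rewrite it through the commutator identity as $b$-derivatives, using $[e,b]=w$, before estimating.
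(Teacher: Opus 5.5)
Your proposal is correct and is essentially the paper's argument: apply the $I_1+I_2$ identity with weight $c_{ij}r_{ij}^\gamma$, note that the remainders are built from the constant brackets $[e_{i,q},b_k^{i,j}]=-[\xi_{j,q},b_k^{i,j}]$ and only partially cancel (leaving a factor $\frac{1}{m_i}-\frac{1}{m_j}$), integrate the $b_k^{i,j}\cdot\nabla P$ term by parts and commute, then absorb via Young's inequality and use $U=\bigoplus_l(u^{(l)}\oplus u^{(l)})$, $P=\bigotimes_l(\rho^{(l)}\otimes\rho^{(l)})$ to reach $\Vert I_\gamma\rho^{(l)}\Vert_{L^\infty}d_2^2$. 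The bare second derivative you worry about never arises, because after that integration by parts each $b_k^{i,j}\cdot\nabla(w\cdot\nabla U)$ equals $w\cdot\nabla(b_k^{i,j}\cdot\nabla U)-[w,b_k^{i,j}]\cdot\nabla U$ with $[w,b_k^{i,j}]$ constant, which is exactly your proposed fix; your aside that $e_{i,q}-\xi_{j,q}$ is a rescaling of $w_q$ is false when $m_i\neq m_j$, but nothing depends on it.
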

\begin{proof}
  This is an adaptation of \cref{lem:fuzzy_commutator_computation} and the subsequent computations. This case is a little more involved since now the commutators don't all cancel out, so there are extra terms to be bounded. If we consider $i,j,k$ fixed and omit the coefficient $c_{ij}$ in front then we have
  \begin{equation*}
    (\ref{eq:analog_heat_multiespecies1})  
    = \sum_{l,q}\left[I_{1}(b_{k}^{i,j},e_{l,q})+I_{1}(b_{k}^{i,j},\xi_{l,q})\right], \quad 
    (\ref{eq:analog_heat_multiespecies2}) 
    = \sum_{l,q}\left[I_{2}(b_{k}^{i,j},e_{l,q})+I_{2}(b_{k}^{i,j},\xi_{l,q})\right],
  \end{equation*}
  where we defined
  \begin{equation*}
    I_{1}(b,e)=\int\left(e\cdot\nabla\left(b\cdot\nabla\left(b\cdot\nabla U\right)\right)\right)\left(e\cdot\nabla U\right)r_{ij}^{\gamma}P,
    \quad I_{2}(b,e)=-\frac{1}{2}\int\left(b\cdot\nabla\left(b\cdot\nabla\left(e\cdot\nabla U\right)^{2}\right)\right)r_{ij}^{\gamma}P.
  \end{equation*}
  Repeating the computations from Lemma \ref{lem:fuzzy_commutator_computation} we obtain again
  \begin{align*}
    I_{1}(b,e)+I_{2}(b,e) & =-\int\left(e\cdot\nabla\left(b\cdot\nabla U\right)\right)^{2}r_{ij}^{\gamma}P-\int\left([e,b]\cdot\nabla U\right)\left(e\cdot\nabla U\right)\left(b\cdot\nabla P\right)r_{ij}^{\gamma}	\\
    & +\int\left([e,b]\cdot\nabla\left(b\cdot\nabla U\right)\right)\left(e\cdot\nabla U\right)r_{ij}^{\gamma}P+\int\left(e\cdot\nabla\left(b\cdot\nabla U\right)\right)\left([e,b]\cdot\nabla U\right)r_{ij}^{\gamma}P.
  \end{align*}
  This yields the following expression:
  \begin{align}
    (\ref{eq:analog_heat_multiespecies1})  + &(\ref{eq:analog_heat_multiespecies2})  =   
    -\int\left|\nabla\left(b_{k}^{i,j}\cdot\nabla U\right)\right|^{2}r_{ij}^{\gamma}P	\nonumber\\
    & -\sum_{l,q}\int\left[\left([e_{l,q},b_{k}^{i,j}]\cdot\nabla U\right)\left(e_{l,q}\cdot\nabla U\right)\left(b_{k}^{i,j}\cdot\nabla P\right)+\left([\xi_{l,q},b_{k}^{i,j}]\cdot\nabla U\right)\left(\xi_{l,q}\cdot\nabla U\right)\left(b_{k}^{i,j}\cdot\nabla P\right)\right]r_{ij}^{\gamma} 
    \raisetag{-6mm}\label{eq:commutators1}\\
    & +\sum_{l,q}\int\left[\left([e_{l,q},b_{k}^{i,j}]\cdot\nabla\left(b_{k}^{i,j}\cdot\nabla U\right)\right)\left(e_{l,q}\cdot\nabla U\right)+\left([\xi_{l,q},b_{k}^{i,j}]\cdot\nabla\left(b_{k}^{i,j}\cdot\nabla U\right)\right)\left(\xi_{l,q}\cdot\nabla U\right)\right]r_{ij}^{\gamma}P
    \raisetag{-6mm}\label{eq:commutators2}\\
    & +\sum_{l,q}\int\left[\left(e_{l,q}\cdot\nabla\left(b_{k}^{i,j}\cdot\nabla U\right)\right)\left([e_{l,q},b_{k}^{i,j}]\cdot\nabla U\right)+\left(\xi_{l,q}\cdot\nabla\left(b_{k}^{i,j}\cdot\nabla U\right)\right)\left([\xi_{l,q},b_{k}^{i,j}]\cdot\nabla U\right)\right]r_{ij}^{\gamma}P
    \raisetag{-6mm}\label{eq:commutators3}.
  \end{align} 
  Let us now expand (\ref{eq:commutators1}) for $k=1$ as an example:
  \begin{equation*}
    (\ref{eq:commutators1})=\alpha_{ij}\int\left[-\left(e_{i,2}\cdot\nabla U\right)\left(\xi_{j,3}\cdot\nabla U\right)\left(b_{1}^{i,j}\cdot\nabla P\right)+\left(e_{i,3}\cdot\nabla U\right)\left(\xi_{j,2}\cdot\nabla U\right)\left(b_{1}^{i,j}\cdot\nabla P\right)\right]r_{i,j}^{\gamma},
  \end{equation*}
  %
  %
  where we have introduced the coefficients $\alpha_{ij} = \frac{1}{m_{i}}-\frac{1}{m_{j}}$. Integrating by parts and commuting then shows that for $k=1$
  \begin{align*}
    (\ref{eq:commutators1}) & =  \alpha_{ij}\int\left(e_{i,2}\cdot\nabla\left(b_{1}^{i,j}\cdot\nabla U\right)\right)\left(\xi_{j,3}\cdot\nabla U\right)r_{ij}^{\gamma}P
    +\alpha_{ij}\int\left(\xi_{j,3}\cdot\nabla\left(b_{1}^{i,j}\cdot\nabla U\right)\right)\left(e_{i,2}\cdot\nabla U\right)r_{ij}^{\gamma}P	\\
    & -\alpha_{ij}\int\left(e_{i,3}\cdot\nabla\left(b_{1}^{i,j}\cdot\nabla U\right)\right)\left(\xi_{j,2}\cdot\nabla U\right)r_{ij}^{\gamma}P
    -\alpha_{ij}\int\left(\xi_{j,2}\cdot\nabla\left(b_{1}^{i,j}\cdot\nabla U\right)\right)\left(e_{i,3}\cdot\nabla U\right)r_{ij}^{\gamma}P\\
    & -\alpha_{ij}^{2}\int\left(e_{i,2}\cdot\nabla U\right)\left(\xi_{j,2}\cdot\nabla U\right)r_{ij}^{\gamma}P
    -\alpha_{ij}^{2}\int\left(e_{i,3}\cdot\nabla U\right)\left(\xi_{j,3}\cdot\nabla U\right)r_{ij}^{\gamma}P\\
    & -\alpha_{ij}^{2}\int\left(e_{i,2}\cdot\nabla U\right)^{2}r_{ij}^{\gamma}P
    -\alpha_{ij}^{2}\int\left(e_{i,3}\cdot\nabla U\right)r_{ij}^{\gamma}P.
  \end{align*}
  All the previous terms are treated similarly using Young's inequality with a parameter $\varepsilon>0$. For example, for the first one we have
  \begin{equation*}
    \alpha_{ij}\int\left(e_{i,2}\cdot\nabla\left(b_{1}^{i,j}\cdot\nabla U\right)\right)\left(\xi_{j,3}\cdot\nabla U\right)r_{ij}^{\gamma}P\leq\varepsilon\int\left|\nabla\left(b_{1}^{i,j}\cdot\nabla U\right)\right|^{2}r_{ij}^{\gamma}P+\frac{\alpha_{ij}^{2}}{4\varepsilon}\int\left(\xi_{j,3}\cdot\nabla U\right)^{2}r_{ij}^{\gamma}P.
  \end{equation*}
  Now expand the integral remembering that $U=\bigoplus_{i}(u^{(i)}\oplus u^{(i)})$ and $P=\bigotimes_{i}(\rho^{(i)}\otimes\rho^{(i)})$
  \begin{align*}
    \frac{\alpha_{ij}^{2}}{4\varepsilon}\int\left(\xi_{j,3}\cdot\nabla U\right)^{2}r_{ij}^{\gamma}P &
    =\frac{\alpha_{ij}^{2}}{4\varepsilon}\int\left(\partial_{3}u^{(j)}(v^{j}_*)\right)^{2}|v^{i}-v_*^{j}|^{\gamma}\rho^{(i)}(v^{i})\rho^{(j)}(v_*^{j})\,dv^{i}\,dv_*^{j}	\\
    & \leq\frac{\alpha_{ij}^{2}}{4\varepsilon}\Vert I_{\gamma}\rho^{(i)}\Vert _{L^{\infty}}d_{2}^{2}(f^{(j)},g^{(j)}),
  \end{align*}
  which shows
  \begin{align*}
    \alpha_{ij}\int\left(e_{i,2}\cdot\nabla\left(b_{1}^{i,j}\cdot\nabla U\right)\right)\left(\xi_{j,3}\cdot\nabla U\right)r_{ij}^{\gamma}P &
    \leq \varepsilon\int\left|\nabla\left(b_{1}^{i,j}\cdot\nabla U\right)\right|^{2}r_{ij}^{\gamma}P	\\
    & + \frac{\alpha_{ij}^{2}}{4\varepsilon}\Vert I_{\gamma}\rho^{(i)}\Vert _{L^{\infty}}d_{2}^{2}(f^{(j)},g^{(j)}).
  \end{align*}
  The same estimate applies to the rest of the terms in the previous expansion of (\ref{eq:commutators1}) and for $k=2,3$. All in all, we obtain
  \begin{equation*}
    (\ref{eq:commutators1}) \leq 6 \varepsilon\int\left|\nabla\left(b_{k}^{i,j}\cdot\nabla U\right)\right|^{2}r_{ij}^{\gamma}P
    +C\max_{1\leq i\leq N}\Vert I_{\gamma}\rho^{(i)}\Vert _{L^{\infty}}\sum_{i=1}^Nd_2^2(f^{(i)},g^{(i)}).
  \end{equation*}
  The term $(\ref{eq:commutators2}) + (\ref{eq:commutators3})$ is bounded very similarly. 
  All together, we have
  \begin{equation*}
    (\ref{eq:analog_heat_multiespecies1})+(\ref{eq:analog_heat_multiespecies2}) 
    \leq
    -\left(1-10\varepsilon\right)\int\left|\nabla\left(b_{k}^{i,j}\cdot\nabla U\right)\right|^{2}r_{ij}^{\gamma}P
    +C_{\varepsilon}\max_{1\leq i\leq N}\Vert I_{\gamma}\rho^{(i)}\Vert _{L^{\infty}}\sum_{i=1}^Nd_2^2(f^{(i)},g^{(i)}).
  \end{equation*}
  Picking $\varepsilon=\frac{1}{20}$ finishes the proof of the lemma after putting $c_{ij}$ in front again.
\end{proof}
Applying Lemma \ref{lem:commutators} to the expression (\ref{eq:analog_heat_multiespecies1}) yields
\begin{align}
  \frac{d}{dt}\sum_{i=1}^Nd_2^2(f^{(i)},g^{(i)}) & \leq -\frac{1}{4}\sum_{i,j,k}c_{ij} \int\left|\nabla\left(b_{k}^{i,j}\cdot\nabla U\right)\right|^{2}r_{ij}^{\gamma}P
   \nonumber	\\
  & +\frac{\gamma}{4}\sum_{i,j,k}c_{ij}\int\left(n^{i,j}\cdot\nabla U\right)\left(b_{k}^{i,j}\cdot\nabla\left(b_{k}^{i,j}\cdot\nabla U\right)\right)\,r_{ij}^{\gamma-2}P \label{eq:extra_term_multiespecies}\\
  & + C \max_{1\leq i\leq N}\left\Vert I_{\gamma}\rho^{(i)}\right\Vert _{L^{\infty}(\mathbb{R}^{3}\times[0,1])} \sum_{i=1}^Nd_2^2(f^{(i)},g^{(i)}).\nonumber
\end{align}
Now we repeat the computations from Section \ref{subsec:sym_fuzzy} to bound \cref{eq:extra_term_multiespecies},
which implies
\begin{equation*}
  \frac{d}{dt}\sum_{i=1}^Nd_2^2(f^{(i)},g^{(i)}) \leq C\max_{1\leq i\leq N}\Vert I_{\gamma}\rho^{(i)}\Vert _{L^{\infty}}\sum_{i=1}^Nd_2^2(f^{(i)},g^{(i)}).
\end{equation*}
Now we simply bound the singular integral using Hölder's inequality with $p>\frac{3}{3+\gamma}$, which yields
\begin{equation*}
  \Vert I_{\gamma}\rho^{(i)}\Vert _{L^{\infty}} \leq 
  C_{p,\gamma}\Vert \rho^{(i)}\Vert _{L^{p}}^{\alpha_{p,\gamma}},\qquad
  \alpha_{p,\gamma}=-\frac{\gamma p}{3(p-1)}\in (0,1),
\end{equation*}
which formally shows \cref{thm:multi_sym_method}.

\section*{Acknowledgments}
The author would like to thank Matias Delgadino and Maria Gualdani for their constant support and insightful discussions.

\bibliographystyle{plain}  
\bibliography{references}

\end{document}